\newtheorem{theorem}{Theorem}[section]
\newtheorem{prop}[theorem]{Proposition}
\theoremstyle{definition}
\newtheorem{defn}[theorem]{Definition}
\newtheorem{lemma}[theorem]{Lemma}
\newtheorem{coro}[theorem]{Corollary}
\newtheorem{prop-def}{Proposition-Definition}[section]
\newtheorem{coro-def}{Corollary-Definition}[section]
\newtheorem{remark}[theorem]{Remark}
\newtheorem{exam}{Example}[section]
\newtheorem{problem}[theorem]{Problem}
\newcommand{\nc}{\newcommand}
\newcommand{\delete}[1]{}
\nc{\tred}[1]{\textcolor{red}{#1}} \nc{\tblue}[1]{\textcolor{blue}{#1}} \nc{\tgreen}[1]{\textcolor{green}{#1}} \nc{\tpurple}[1]{\textcolor{purple}{#1}} \nc{\btred}[1]{\textcolor{red}{\bf #1}} \nc{\btblue}[1]{\textcolor{blue}{\bf #1}} \nc{\btgreen}[1]{\textcolor{green}{\bf #1}} \nc{\btpurple}[1]{\textcolor{purple}{\bf #1}}
\renewcommand{\Bbb}{\mathbb}
\newcommand{\efootnote}[1]{}
\renewcommand{\textbf}[1]{}
\nc{\mlabel}[1]{\label{#1}}  
\nc{\mcite}[2][]{\cite[#1]{#2}}  
\nc{\mref}[1]{\ref{#1}}  
\nc{\mbibitem}[1]{\bibitem{#1}} 
\nc{\mlabel}[1]{\label{#1}  
{\hfill \hspace{1cm}{\bf{{\ }\hfill(#1)}}}}
\nc{\mcite}[2][1]{\cite[#1]{#2}{{\bf{{\ }(#2; #1)}}}}  
\nc{\mref}[1]{\ref{#1}{{\bf{{\ }(#1)}}}}  
\nc{\mbibitem}[1]{\bibitem[\bf #1]{#1}} 
\renewcommand\geq{\geqslant}
\renewcommand\leq{\leqslant}
\renewcommand\bar[1]{\overline{#1}}
\newcommand\reallywidehat[1]{\arraycolsep=0pt\relax%
\begin{array}{c}
\stretchto{
  \scaleto{
    \scalerel*[\widthof{\ensuremath{#1}}]{\kern-.5pt\bigwedge\kern-.5pt}
    {\rule[-\textheight/2]{1ex}{\textheight}} 
  }{\textheight} %
}{0.5ex}\\           
#1\\                 
\rule{-1ex}{0ex}
\end{array}
}
\nc{\into}{I}
\nc{\rbw}{\mathfrak{R}} \nc{\brp}{\mathrm{brp}} \nc{\lead}{\mathrm{Lead}} \nc{\Id}{\mathrm{Id}} \nc{\Irr}{\mathrm{Irr}}
\nc{\vx}{\sigma} \nc{\vy}{\tau} \nc{\dvx}{\sigma^{(1)}} \nc{\dvy}{\tau^{(1)}} \nc{\done}{\vep} \nc{\mcitep}[1]{\mcite{#1}} \nc{\wt}{\mathrm{wt}} \nc{\bre}[1]{|#1|} \nc{\mapmonoid}{\frakM} \nc{\disjoint}{\frakM'}
\nc{\ncpoly}[1]{\langle #1\rangle}  
\nc{\mapm}[1]{\lfloor\!|{#1}|\!\rfloor}
\nc{\diff}[1]{{}^\NC\{ #1 \}} \nc{\disj}[1]{\{{#1}\}'} \nc{\mdisj}[1]{\frakM'(#1)} \nc{\brho}{\bar{\rho}} \nc{\om}{\bar{\frakm}} \nc{\frakn}{\mathfrak n} \nc{\ddeg}[1]{^{(#1)}} \nc{\opset}{X} \nc{\genset}{{Z}} \nc{\NC}{\mathrm{{NC}}} \nc{\leaf}{\mathrm{leaf}} \nc{\twig}{\mathrm{twig}} \nc{\fe}{\mathrm{fl}} \nc{\munderline}[1]{#1} \nc{\bo}{o} \nc{\dep}{\mathrm{depth}} \nc{\ofe}{\mathrm{ofl}} \nc{\dfe}{\mathrm{dfe}} \nc{\fex}{\mathrm{fex}} \nc{\dl}{\mathrm{dlex}} \nc{\db}{\mathrm{db}} \nc{\lex}{\mathrm{lex}} \nc{\clex}{\mathrm{clex}} \nc{\dgp}{\mathrm{dgp}} \nc{\dgx}{\mathrm{dgx}} \nc{\br}{\mathrm{br}} \nc{\obd}{\mathrm{odb}} \nc{\ob}{\mathrm{ob}}
\nc{\pie}{\mathrm{PIE}}
\nc{\rbo}{\mathrm{RBO}}
\nc{\supp}{\mathcal{S}}
\nc{\nul}{\mathcal{Z}}
\nc{\bin}[2]{ (_{\stackrel{\scs{#1}}{\scs{#2}}})}  
\nc{\binc}[2]{ \left (\!\! \begin{array}{c} \scs{#1}\\
    \scs{#2} \end{array}\!\! \right )}  
\nc{\bincc}[2]{  \left ( {\scs{#1} \atop \vspace{-1cm}\scs{#2}} \right )}  
\nc{\bs}{\bar{S}} \nc{\cosum}{\sqsubset} \nc{\la}{\longrightarrow} \nc{\rar}{\rightarrow} \nc{\dar}{\downarrow} \nc{\dprod}{**} \nc{\dap}[1]{\downarrow \rlap{$\scriptstyle{#1}$}} \nc{\md}[1]{\bar{#1}} \nc{\uap}[1]{\uparrow \rlap{$\scriptstyle{#1}$}} \nc{\defeq}{\stackrel{\rm def}{=}} \nc{\disp}[1]{\displaystyle{#1}} \nc{\dotcup}{\ \displaystyle{\bigcup^\bullet}\ } \nc{\gzeta}{\bar{\zeta}} \nc{\hcm}{\ \hat{,}\ } \nc{\hts}{\hat{\otimes}} \nc{\barot}{{\otimes}} \nc{\free}[1]{\bar{#1}} \nc{\uni}[1]{\tilde{#1}} \nc{\hcirc}{\hat{\circ}} \nc{\leng}{\ell} \nc{\lleft}{[} \nc{\lright}{]}
\nc{\lb}{[} 
\nc{\rb}{]} 
\nc{\curlyl}{\left \{ \begin{array}{c} {} \\ {} \end{array}
    \right.  \!\!\!\!\!\!\!}
\nc{\curlyr}{ \!\!\!\!\!\!\!
    \left. \begin{array}{c} {} \\ {} \end{array}
    \right \} }
\nc{\longmid}{\left | \begin{array}{c} {} \\ {} \end{array}
    \right. \!\!\!\!\!\!\!}
\nc{\onetree}{\bullet} \nc{\ora}[1]{\stackrel{#1}{\rar}}
\nc{\ola}[1]{\stackrel{#1}{\la}}
\nc{\ot}{\otimes} \nc{\mot}{{{\boxtimes\,}}} \nc{\otm}{\overline{\boxtimes}} \nc{\sprod}{\bullet} \nc{\scs}[1]{\scriptstyle{#1}} \nc{\mrm}[1]{{\rm #1}} \nc{\msum}{\sum\limits}
\nc{\margin}[1]{\marginpar{\rm #1}}   
\nc{\dirlim}{\displaystyle{\lim_{\longrightarrow}}\,} \nc{\invlim}{\displaystyle{\lim_{\longleftarrow}}\,} \nc{\mvp}{\vspace{0.3cm}} \nc{\tk}{^{(k)}} \nc{\tp}{^\prime} \nc{\ttp}{^{\prime\prime}} \nc{\svp}{\vspace{2cm}} \nc{\vp}{\vspace{8cm}} \nc{\proofbegin}{\noindent{\bf Proof: }}
\nc{\proofend}{$\blacksquare$ \vspace{0.3cm}}
\nc{\modg}[1]{\!<\!\!{#1}\!\!>}
\nc{\intg}[1]{F_C(#1)} \nc{\lmodg}{\!<\!\!} \nc{\rmodg}{\!\!>\!} \nc{\cpi}{\widehat{\Pi}}
\nc{\sha}{{\mbox{\cyr X}}}  
\nc{\shap}{{\mbox{\cyrs X}}} 
\nc{\shpr}{\diamond}    
\nc{\shp}{\ast} \nc{\shplus}{\shpr^+}
\nc{\shprc}{\shpr_c}    
\nc{\msh}{\ast} \nc{\zprod}{m_0} \nc{\oprod}{m_1} \nc{\vep}{\varepsilon} \nc{\labs}{\mid\!} \nc{\rabs}{\!\mid}
\nc{\astarrow}{\overset{\raisebox{-3pt}{$\ast$}}{\rightarrow}}
\nc{\dth}{d} \nc{\mmbox}[1]{\mbox{\ #1\ }} \nc{\fp}{\mrm{FP}} \nc{\rchar}{\mrm{char}} \nc{\Fil}{\mrm{Fil}} \nc{\Mor}{Mor\xspace} \nc{\gmzvs}{gMZV\xspace} \nc{\gmzv}{gMZV\xspace} \nc{\mzv}{MZV\xspace} \nc{\mzvs}{MZVs\xspace} \nc{\Hom}{\mrm{Hom}} \nc{\id}{\mrm{id}} \nc{\im}{\mrm{im}} \nc{\incl}{\mrm{incl}} \nc{\map}{\mrm{Map}} \nc{\mchar}{\rm char} \nc{\nz}{\rm NZ}
\nc{\Alg}{\mathbf{Alg}} \nc{\Bax}{\mathbf{Bax}} \nc{\bff}{\mathbf f} \nc{\bfk}{{\bf k}} \nc{\bfone}{{\bf 1}} \nc{\bfx}{\mathbf x} \nc{\bfy}{\mathbf y}
\nc{\base}[1]{\bfone^{\otimes ({#1}+1)}} 
\nc{\Cat}{\mathbf{Cat}} \delete{}
\nc{\detail}{\marginpar{\bf More detail}
    \noindent{\bf Need more detail!}
    \svp}
\nc{\Int}{\mathbf{Int}} \nc{\Mon}{\mathbf{Mon}}
\nc{\rbtm}{{shuffle }} \nc{\rbto}{{Rota-Baxter }} \nc{\remarks}{\noindent{\bf Remarks: }} \nc{\Rings}{\mathbf{Rings}} \nc{\Sets}{\mathbf{Sets}}
\nc{\BA}{{\Bbb A}} \nc{\CC}{{\Bbb C}} \nc{\DD}{{\Bbb D}} \nc{\EE}{{\Bbb E}} \nc{\FF}{{\Bbb F}} \nc{\GG}{{\Bbb G}} \nc{\HH}{{\Bbb H}} \nc{\LL}{{\Bbb L}} \nc{\NN}{{\Bbb N}} \nc{\KK}{{\Bbb K}} \nc{\QQ}{{\Bbb Q}} \nc{\RR}{{\Bbb R}} \nc{\TT}{{\Bbb T}} \nc{\VV}{{\Bbb V}} \nc{\ZZ}{{\Bbb Z}}
\nc{\cala}{{\mathcal A}} \nc{\calc}{{\mathcal C}} \nc{\cald}{{\mathcal D}} \nc{\cale}{{\mathcal E}} \nc{\calf}{{\mathcal F}} \nc{\calg}{{\mathcal G}} \nc{\calh}{{\mathcal H}} \nc{\cali}{{\mathcal I}} \nc{\call}{{\mathcal L}} \nc{\calm}{{\mathcal M}} \nc{\caln}{{\mathcal N}} \nc{\calo}{{\mathcal O}} \nc{\calp}{{\mathcal P}} \nc{\calr}{{\mathcal R}} \nc{\cals}{{\mathcal S}} \nc{\calt}{{\mathcal T}} \nc{\calw}{{\mathcal W}} \nc{\calk}{{\mathcal K}} \nc{\calx}{{\mathcal X}}
\nc{\calz}{{\mathcal Z}}
 \nc{\CA}{\mathcal{A}}
\nc{\fraka}{{\mathfrak a}} \nc{\frakA}{{\mathfrak A}} \nc{\frakb}{{\mathfrak b}} \nc{\frakB}{{\mathfrak B}} \nc{\frakD}{{\mathfrak D}} \nc{\frakH}{{\mathfrak H}} \nc{\frakM}{{\mathfrak M}} \nc{\bfrakM}{\overline{\frakM}} \nc{\frakm}{{\mathfrak m}} \nc{\frakP}{{\mathfrak P}} \nc{\frakN}{{\mathfrak N}} \nc{\frakp}{{\mathfrak p}} \nc{\frakS}{{\mathfrak S}} \nc{\frakx}{{\mathfrak x}} \nc{\ox}{\bar{\frakx}} \nc{\frakX}{{\mathfrak X}} \nc{\fraky}{{\mathfrak y}} \nc\dop{\delta}
\nc{\Reduce}{{\rm Red}}
\font\cyr=wncyr10 \font\cyrs=wncyr7
\nc{\redt}[1]{\textcolor{red}{#1}}
\nc{\lge}{{\langle}}
\nc{\rge}{{\rangle}}
\nc{\intl}{\mathrm{int}}
\nc{\unl}{\mathrm{unl}}
\nc{\hs}{\text{Hurwitz series}}
\nc{\ord}{\mathrm{ord}}
\nc{\lc}{\lfloor} \nc{\rc}{\rfloor}
\nc{\pal}{\partial}
\nc{\nonz}[1]{#1^\times}
\nc{\NNP}{\nonz{\NN}}
\nc{\stdint}[1]{J_{#1}}
\nc{\dualmod}[1]{#1^*}
\nc{\alghom}[1]{#1^\bullet}
\nc{\End}{\mathrm{End}}
\nc{\rng}{\mathrm{\mathcal{R}}}
\nc{\codim}{\mathrm{codim}}
\nc{\evl}{\mathrm{ev}}
\nc{\bx}{\ast}
\begin{document}
\title[Interlacing, integrals of Hurwitz series and differential equations]{Interlacing, integrals of Hurwitz series and differential equations}


\author{Shanghua Zheng}
\address{Department of Mathematics, Jiangxi Normal University, Nanchang, Jiangxi 330022, China}
\email{zhengsh@jxnu.edu.cn}

\hyphenpenalty=8000
\date{\today}

\begin{abstract}
In this paper, we first introduce the unlacing of Hurwitz series,  which can be viewed as an inverse of interlacing, and develop the basic properties of unlacing, interlacing and integral of Hurwitz series.   We then show that the multiplication of interlacing of Hurwitz series by basis elements can be also rewritten as an interlacing, and provide the explicit multiplication formula in terms of integrals and Hadamard products. Finally, we investigate the natural exponential function $\exp$, and realize the method of  reduction of order in the ring of Hurwitz series by using $\exp$.
\end{abstract}

\keywords{Hurwitz series, interlacing, integral, differential equation\\
\qquad 2020 Mathematics Subject Classification. 12H05, 16W60, 47G10, 12H20}


\maketitle

\tableofcontents

\hyphenpenalty=8000 \setcounter{section}{0}


\section{Introduction}\mlabel{sec:int}
\smallskip
The notion of interlacing, as a basic approach to obtain new algebraic structures from old, is widely employed in algebra, such as interlacing modules~\cite{DK}  and interlacing polynomials~\cite{Joh}. In particular, the interlacing of linear recursive sequences, was proposed by Larson and  Taft~\cite{LT}, for describing the algebraic structure of linear recursive sequences with respect to  Hadamard products. Especially, by applying the results on the Hopf algebra structure of linearly recursive sequences, the authors showed that a linear recursive sequence $f$ is Hadamard invertible if and only if $f$ is everywhere nonzero, and except for a finite number of terms, is the interlacing of geometric series. See~\cite{PT} for further details concerning the structure of Hopf algebras of linear recursive sequences.

The ring of Hurwitz series, as a variant of formal power series ring, was proposed for studying the differential algebra~\cite{K1}. Lately, the concept of a $\lambda$-differential algebra (also called a differential algebra of weight $\lambda$), as a common generalization of differential algebra (when $\lambda=0$) and difference algebra (when $\lambda=1$), was introduced in ~\cite{GuK}. In order to develop the cofree $\lambda$-differential algebra on any $\bfk$-algebra $A$, the notion of a $\lambda$-Hurwitz series over $A$ was also presented in ~\cite{GuK}. Furthermore, the $\lambda$-Hurwitz series  carries  a $\lambda$-differential Rota-Baxter algebra structure. See~\cite{Gub,YGT,ZGGS} for more details on Rota-Baxter algebras.

As it turns out,  the algebraic structures and properties of  Hurwitz series ring over  an algebra $A$ of positive characteristic  are closely related to those of $A$. Motivated by this, the Hurwitz series ring has also attracted quite much attention recently. For instance, in ~\cite{Be}, Benhissi depicted
 the Hurwitz series ring over $A$ has PF-properties if and only if $A$ is PF-ring and a torsion free module provided a extra condition.
In addition, according to  McCoy's Theorem on a commutative ring,  McCoy and CN-properties of Hurwitz series ring are also considered in~\cite{NRA}. What is more, the ring of Hurwitz series over a noncommutative ring was introduced by Paykan~\cite{Pa}, and it is proved that the skew Hurwitz series ring $(HA,\alpha)$, where $\alpha:A\to A$ is a ring homomorphism, is a 2-primal ring if and only if $A$ is 2-primal ring under some appropriate conditions.

More recently, from a differential equation viewpoint, the authors ~\cite{GK1} developed the interlacing of Hurwitz series, and proved that solutions of $n$th-order monic linear homogeneous differential equations with constant coefficients  can be considered as an interlacing of solutions of a first-order system of differential equations.

To make our point more precise, let us briefly recall the main result of~\cite{GK1}.
\vspace{0.1cm}

\begin{theorem} ~\cite[Theorem~3.4]{GK1}
Let $A$ be a commutative ring with identity element. Let $HA$ be the ring of Hurwitz series.
Let $z=(z_0,z_1,\cdots,z_{n-1})\in HA^n$ and let $u=\intl (z_0,z_1,\cdots,z_{n-1})$ be the interlacing of $z_0,z_1,\cdots,z_{n-1}$.  Then $u$ is a solution of $n$th-order monic linear homogeneous differential equations
$$L(y)=\pal^ny+\sum_{i=0}^{n-1}a_i\pal^i y=0, \quad a_i\in A$$
if and only if  $Z=z^t$, that is, the transpose of $z$, is a solution of the $n\times n$ matrix equation
$$LZ'=UZ,$$
where \,$Z'=(\pal(z_0),\pal(z_1),\cdots,\pal(z_{n-1}))^t$,
$$L=\left(
      \begin{array}{cccccc}
        1 & 0 & 0 &\cdots & 0& 0 \\
        a_{n-1} & 1 & 0 & \cdots & 0 & 0 \\
        a_{n-2} & a_{n-1} &1 & \cdots & 0 & 0 \\
        \vdots & \vdots & \vdots & \ddots & \vdots & \vdots \\
        a_2 & a_3 & a_4 & \cdots & 1 & 0 \\
        a_1 & a_2 & a_3 & \cdots & a_{n-1} & 1 \\
      \end{array}
    \right)
\,\,\text{and}\,\,\,
U=\left(
      \begin{array}{cccccc}
        -a_0 & -a_1 & -a_2 &\cdots & -a_{n-2}& -a_{n-1} \\
        0 & -a_0 & -a_1 & \cdots & -a_{n-3} & -a_{n-2} \\
        0 & 0&-a_0& \cdots & -a_{n-4} & -a_{n-3} \\
        \vdots & \vdots & \vdots & \ddots & \vdots & \vdots \\
        0 & 0 & 0 & \cdots & -a_0 & -a_1 \\
        0 & 0 & 0 & \cdots &0 & -a_0 \\
      \end{array}
    \right).
$$
\mlabel{thm:gkthm}
\end{theorem}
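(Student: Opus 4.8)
The plan is to reduce the statement to two elementary compatibilities of the interlacing operator $\intl$: with the Hurwitz derivation $\pal$, and with multiplication by constants from $A$. Write $u=\intl(z_0,\ldots,z_{n-1})$, so that its coefficients are $u_{kn+j}=(z_j)_k$ for $0\le j\le n-1$ and $k\ge 0$. A direct coefficient computation (or the basic properties of interlacing recorded earlier) gives $\pal\,\intl(z_0,\ldots,z_{n-1})=\intl(z_1,\ldots,z_{n-1},\pal z_0)$, and iterating,
\[
\pal^i u=\intl\big(z_i,z_{i+1},\ldots,z_{n-1},\pal z_0,\ldots,\pal z_{i-1}\big),\qquad 0\le i\le n,
\]
where for $i=n$ the list is $(\pal z_0,\ldots,\pal z_{n-1})$. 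Moreover, since $a\in A$ sits inside $HA$ as the constant series $(a,0,0,\ldots)$, the Hurwitz product $af$ is the coefficientwise scalar multiple of $f$, whence $a\,\intl(z_0,\ldots,z_{n-1})=\intl(az_0,\ldots,az_{n-1})$, and $\intl$ is additive. These are the only inputs needed.

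Feeding this into $L(u)=\pal^n u+\sum_{i=0}^{n-1}a_i\pal^i u$, additivity and constant‑compatibility show that $L(u)$ is again an interlacing, $L(u)=\intl(w_0,\ldots,w_{n-1})$, where the $j$‑th component gathers the $j$‑th slot of each summand. The slot index $j$ lands in the $z$‑part of the list for $\pal^i u$ when $i+j\le n-1$ and in the $\pal z$‑part when $i+j\ge n$, so
\[
w_j=\pal z_j+\sum_{\substack{0\le i\le n-1\\ i+j\le n-1}}a_i\,z_{i+j}+\sum_{\substack{0\le i\le n-1\\ i+j\ge n}}a_i\,\pal z_{\,i+j-n},\qquad 0\le j\le n-1.
\]

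Since $\intl(w_0,\ldots,w_{n-1})$ has coefficients $(w_j)_k$ in position $kn+j$, it vanishes iff every $w_j$ vanishes; applied to $L(u)$ this gives $L(u)=0\iff w_j=0$ for all $j$. Rearranging $w_j=0$ so the $\pal z$‑terms are on one side and the $z$‑terms on the other, it remains to check that these $n$ scalar identities are precisely the rows of $LZ'=UZ$. Reindexing the last sum in $w_j$ by $m=i+j-n$ makes the coefficient of $\pal z_m$ equal to $a_{\,n-j+m}$, which is the below‑diagonal $(j,m)$‑entry of $L$ (the diagonal $1$'s accounting for the $\pal z_j$ term); reindexing the middle sum by $m=i+j$ makes the coefficient of $z_m$ equal to $a_{\,m-j}$, and the on‑or‑above‑diagonal $(j,m)$‑entry of $U$ is $-a_{\,m-j}$. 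Hence $w_j=0$ is exactly the equality of the $j$‑th entries of $LZ'$ and $UZ$, and the equivalence follows.

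The work is entirely bookkeeping: the only delicate point is keeping the slot index $j$, the summation index $i$, and the reindexed variable $m$ aligned so that the split sums defining $w_j$ match the band structures of $L$ and $U$ term by term. There is no analytic or structural subtlety beyond the two properties of $\intl$ isolated at the start.
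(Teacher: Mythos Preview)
Your argument is correct. The two inputs you isolate---the shift formula $\pal^i\,\intl(z_0,\ldots,z_{n-1})=\intl(z_i,\ldots,z_{n-1},\pal z_0,\ldots,\pal z_{i-1})$ and the $A$-linearity of $\intl$---are exactly Lemma~\ref{lem:intlin}\,(\ref{it:addhom}),(\ref{it:klin}),(\ref{it:derivation}) in the present paper, and the injectivity of $\intl$ you invoke to pass from $\intl(w_0,\ldots,w_{n-1})=0$ to $w_j=0$ for all $j$ is Proposition~\ref{prop:intunl}. The reindexing that matches $w_j=0$ with the $j$-th row of $LZ'=UZ$ is carried out cleanly.

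One point of context: this theorem is not proved in the paper at hand. It is quoted from \cite[Theorem~3.4]{GK1} as motivation in the introduction, so there is no in-paper proof to compare against. That said, since Lemma~\ref{lem:intlin} is itself \cite[Lemma~3.3]{GK1} and appears immediately before the theorem in the original source, your derivation is almost certainly the intended one there as well; the bookkeeping you describe is the natural and essentially forced route once those compatibilities are in hand.
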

As is well known, solving  differential equations with variable coefficients  is more difficult than differential equations with  constant coefficients. Likewise, it is a challengeable problem to generalize Theorem ~\mref{thm:gkthm} to the more general case that the coefficients of equations are arbitrary variable, such as
$$L(y)=\pal^ny+\sum_{i=0}^{n-1}f_i\pal^i y=0, \quad f_i\in HA.$$
One of key to the above question is that the product of a variable and an interlacing should be rewritten as a new interlacing. For this,  the authors proposed the following  open problem in~\cite{GK1}.
\begin{problem}
Whether or not we can find a more general formula in parallel with the formula
$$a\, (\intl(z_0,z_1,\cdots,z_{n-1}))=\intl(az_0,az_1,\cdots,az_{n-1}),$$
where $a\in A$ is a constant.
\end{problem}
In this paper, we give a definite answer to the above question.
In order to achieve this goal, the first step is to provide a multiplication formula for basis elements $\lge m\rge $ and $\intl(z_0,z_1,\cdots,z_{n-1})$, and then we obtain  a multiplication formula for  arbitrary Hurwitz series $f\in HA $ and $\intl(z_0,z_1,\cdots,z_{n-1})$ by linearity.
Meanwhile, as an application of interlacing to differential equations, we list some  examples, such as $\exp(\sin)$,  the composition of  the natural exponential function $\exp$ and the trigonometric function $\sin$. These examples will result in several new sequences in OEIS~\cite{Slo}.

After that, applying  the natural exponential function $\exp$, we realize  the general solutions of a second-order linear homogeneous differential equation. Finally, we investigate the method of  reduction of order for solving  differential equations in the ring of Hurwitz series.

\smallskip

\noindent
{\bf Notations.}
Unless otherwise specified, all rings are commutative with identity element.  We denote by $\NN$ the natural numbers, $\NN^+$ the positive integers, $\QQ$ the rational numbers, and $\RR$ the real numbers. Fix a positive integer $n\in\NN^+$, $\NN_n=\{0,1,\cdots,n-1\}$.

\section{Hurwitz series}
\mlabel{sec:prodHA}

In this section, we first recall some basic notations and  results of Hurwitz series. See ~\cite{GK1, K1} for a more detailed treatment.
Let $A$ be a commutative ring with identity element $1$. Denote the set $A^{\NN}:=\{f:\NN\to A\}$ consisting of infinite sequences in $A$, which is called the ring of Hurwitz series over $A$. We denote by $HA$ the ring of Hurwitz series. The elements of  $HA$ will always be written in the form $(z(n)):=(z(0),z(1),z(2),\cdots,z(n),\cdots)$, where $z(i)\in A, i\geq 0$.
Furthermore, addition and multiplication in $HA$ are defined by:
\begin{eqnarray}
&&(a(n))+(b(n)):=(c(n)),\,\quad \text{where}\quad c(n)=a(n)+b(n)\nonumber\\
&&(a(n))\cdot(b(n)):=(c(n)),\quad\text{where}\quad c(n)=\sum_{i=0}^n{n\choose i} a(i)b(n-i),\,\, n\in\NN.
\mlabel{eq:Hproduct}
\end{eqnarray}
Then $0:=(0,0,0,\cdots,)$ is a  zero element under addition and $1:=(1,0,0,\cdots)$ is the identity element of $HA$ under product.
Let $\bfk$ be a unitary  commutative ring. Let $A$ be a commutative $\bfk$-algebra.  Recall from~\cite{GuK} that $HA$ is a differential $\bfk$-algebra with  derivation  defined by
$$\partial_A:HA\to HA,\quad (a(0),a(1),a(2),\cdots)\mapsto (a(1),a(2),a(3),\cdots).$$
Further,  $HA$ is also a Rota-Baxter $\bfk$-algebra with  a Rota-Baxter operator of weight $0$ (that is,  integral operator)  given by
$$\int:HA\to HA,\quad (a(0),a(1),a(2),\cdots)\mapsto (0,a(0),a(1),a(2),\cdots).$$
Note that $\partial_A \circ \int =\id_{HA}$. Thus $(HA,\partial_A, \int)$ is a differential Rota-Baxter $\bfk$-algebra of weight $0$.
We see that there exists a natural ring homomorphism
$$\vep: HA\to A,\,h\mapsto h(0).$$
For any natural number $m$, denote
\begin{equation}
\langle m\rangle :=(\underbrace{0,\cdots 0}_{m},1,0,\cdots)=(\delta_{i,m})_{i\geq0}\in HA.
\end{equation}
For instance,
\begin{equation}
\lge 0\rge=(1,0,0,0,\cdots),\lge 1\rge=(0,1,0,0,\cdots),\lge 2\rge=(0,0,1,0,\cdots).
\end{equation}
We denote $u:=\lge 1\rge$.
Note that, for every $z=(z(0),z(1),z(2),\cdots)\in HA$, we have
\begin{equation}
z=\sum_{m=0}^\infty z(m)\lge m\rge.
\mlabel{eq:lincom}
\end{equation}
This means that $z$ can be expressed as a unique linear combination of $\lge m\rge$.
Thus, the set $\{\lge m\rge\,|\, m\in\NN\}$ forms a basis of Hurwtiz series.

\subsection{Interlacing and integrals of Hurwitz series}
\mlabel{subsec:IntandIng}

For all $f\in HA$ and all $n\in\NN$, we denote by
$$\int^n f:=\underbrace{\int\cdots\int}_n f$$
the $n$-fold integral of $f$ with the convention that $\int^0=\id$.
\begin{prop}\mlabel{prop:basismn}
Let $m,n\in\NN$ and let $k\in\NN^+$. Then
\begin{enumerate}
\item \mlabel{it:parba}
$$\partial_A(\lge m+1\rge)=\lge m\rge\quad\text{and}\quad\partial_A^m(\lge m\rge)=\lge 0\rge.$$
\item\mlabel{it:intba}
\begin{equation}
\int^n\lge m\rge=\lge m+n\rge,\quad\text{and further}\quad\lge n\rge=\int^n\lge 0\rge.
\mlabel{eq:intba}
\end{equation}
\item\mlabel{it:intpar}
$$(\int \circ \partial_A) (\lge k\rge)=\lge k\rge.$$
\item\mlabel{it:proba}
\begin{equation}
\lge m\rge \lge n\rge={m+n\choose m}\lge m+n\rge.
\mlabel{it:mnpro}
\end{equation}
\end{enumerate}

\end{prop}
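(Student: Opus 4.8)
The plan is to reduce every assertion to the explicit descriptions of $\partial_A$, $\int$ and the Hurwitz product acting on the Kronecker-delta sequences $\lge m\rge=(\delta_{i,m})_{i\geq0}$, together with two short inductions. The four parts are logically nested: (c) will be an immediate consequence of (a) and (b), while (b) and (d) come straight from the definitions, so no substantial input is needed.

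For part (a), recall that $\partial_A$ merely deletes the first term of a sequence; hence $\partial_A(\lge m+1\rge)=(\delta_{i+1,m+1})_{i\geq0}=(\delta_{i,m})_{i\geq0}=\lge m\rge$, and iterating this identity $m$ times (an obvious induction on $m$, with $m=0$ trivial) gives $\partial_A^m(\lge m\rge)=\lge 0\rge$. For part (b), recall that $\int$ prepends a $0$; hence the $i$th coordinate of $\int\lge m\rge$ is $0$ for $i=0$ and $\delta_{i-1,m}$ for $i\geq1$, which under the convention $\delta_{-1,m}=0$ is exactly $\delta_{i,m+1}$, so $\int\lge m\rge=\lge m+1\rge$. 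An induction on $n$ then yields $\int^n\lge m\rge=\lge m+n\rge$, and the special case $m=0$ gives $\lge n\rge=\int^n\lge 0\rge$.

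Part (c) is then immediate: since $k\in\NN^+$, part (a) gives $\partial_A(\lge k\rge)=\lge k-1\rge$ and part (b) gives $\int\lge k-1\rge=\lge k\rge$, so $(\int\circ\partial_A)(\lge k\rge)=\lge k\rge$. (Equivalently, $\int\circ\partial_A$ is the operator that erases the constant term of a Hurwitz series, and $\lge k\rge$ has vanishing constant term precisely because $k\geq1$.) For part (d), apply the multiplication rule \eqref{eq:Hproduct}: the $k$th coordinate of $\lge m\rge\lge n\rge$ is $\sum_{i=0}^{k}{k\choose i}\delta_{i,m}\delta_{k-i,n}$, a sum in which at most the single term $i=m$, $k-i=n$ survives. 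Hence this coordinate equals ${m+n\choose m}$ when $k=m+n$ and $0$ otherwise, which is exactly the $k$th coordinate of ${m+n\choose m}\lge m+n\rge$.

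The computations are entirely routine and there is no real obstacle; the only points that deserve a moment's care are the boundary conventions (the symbol $\delta_{-1,m}$ appearing in $\int\lge m\rge$, and the hypothesis $k\geq1$ that keeps $\partial_A(\lge k\rge)$ a genuine basis element in part (c)) and keeping the two inductions honest. One could alternatively deduce (d) from (b) using the Rota-Baxter identity of weight $0$, namely $\int(f)\int(g)=\int\!\big(\int(f)\,g\big)+\int\!\big(f\int(g)\big)$, applied to $f=g=\lge 0\rge$ together with an induction, but this merely repackages the same binomial bookkeeping and is less transparent than the direct convolution.
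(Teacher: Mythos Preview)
Your proof is correct, and for parts (a)--(c) it matches the paper's approach exactly (the paper simply says these ``follow from the definition of $\partial_A$ and $\int$'').  For part (d), however, you and the paper take opposite routes.  You compute the Hurwitz convolution directly, reading off the single surviving Kronecker term; the paper instead writes $\lge m\rge\lge n\rge=(\int\lge m-1\rge)(\int\lge n-1\rge)$, applies the Rota--Baxter (integration-by-parts) identity, and runs an induction on $m+n$ together with Pascal's rule.  Amusingly, the alternative you mention in your final paragraph and dismiss as ``less transparent'' is precisely the paper's proof, while the paper itself later remarks (after its Lemma~\mref{lem:mfn}) that your direct-convolution argument ``also gives'' part (d).  Your approach is shorter and requires no induction; the paper's approach has the virtue of exercising the Rota--Baxter structure that is thematically central to the article.
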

\begin{proof}
We prove only Item~(\mref{it:proba}) by induction on $m+n\geq 0$. Items~(\mref{it:parba}), (\mref{it:intba}) and (\mref{it:intpar}) follow from the definition of $\partial_A$ and $\int$.  The case when $m=0$ or $n=0$ is trivial. Assume that $m\geq 1$ and $n\geq 1$. Using the formula for integration by part and Eq.~(\mref{eq:intba}), we get
\begin{eqnarray*}
\lge m\rge \lge n\rge&=& \Big(\int \lge m-1\rge \Big)\Big(\int \lge n-1\rge\Big)\\
&=&\int\bigg(\lge m-1\rge\int(\lge n-1\rge )+\int(\lge m-1\rge)\lge n-1\rge\bigg)\\
&=&\int\bigg(\lge m-1\rge \lge n\rge+\lge m\rge \lge n-1\rge\bigg)\\
&=&\int\bigg({m-1+n\choose n}\lge m+n-1\rge+{m+n-1\choose n-1}\lge m+n-1\rge\bigg)\\
&=&\int {m+n\choose n}\lge m+n-1\rge\\
&=&{m+n\choose n}\lge m+n\rge.
\end{eqnarray*}
This completes the proof of Item~(\mref{it:proba}).
\end{proof}

\begin{defn} ~\cite[Definition 3.1]{GK1}\mlabel{defn:intl1}
Let $n\in \NN^+$ be fixed. Let $z_0,z_1,\cdots,z_{n-1}\in HA$. For every $k\in\NN$,  define the following finite sequences having $n$ elements in $A$
\begin{equation}
s_{n}(k):=\{z_0(k),z_1(k),\cdots,z_{n-1}(k)\}.
\end{equation}
The {\bf interlacing } of  $z_0,z_1,\cdots,z_{n-1}$ is the Hurwitz series consisting of all $s_{n}(k)$ with $k\geq 0$
\begin{equation}
\intl(z_0,z_1,\cdots,z_{n-1}):=(s_{n}(0),s_{n}(1),\cdots,s_{n}(k),\cdots),
\mlabel{eq:intl1}
\end{equation}
that is,
\begin{eqnarray}
&&\intl(z_0,z_1,\cdots,z_{n-1})\mlabel{eq:intl2}\\
&=&(z_0(0),z_1(0),\cdots,z_{n-1}(0),z_0(1),z_1(1),\cdots,z_{n-1}(1),\cdots,z_0(k),z_1(k),\cdots,z_{n-1}(k),\cdots).\nonumber
\end{eqnarray}
\end{defn}
\begin{exam}
Take $n=3$ in Eq.~(\mref{eq:intl2}). Let $z_0=(0,0,0,\cdots), z_1=(1,1,1,\cdots),z_2=(2,2,2,\cdots)$. Then $\intl(z_0,z_1,z_2)=(0,1,2,0,1,2,\cdots)$.
\end{exam}
The {\bf floor function}, also called the greatest integer function or integer value,  is the function that  gives the greatest integer less than or equal to $r\in\RR$, denoted by $\lc r\rc$.
Fix a positive integer $n\in\NN^+$. Let $\NN_n=\{0,1,\cdots,n-1\}$.
For any $m\in \NN$,
we set $$\widehat{m}=\lfloor \frac{m}{n}\rfloor \quad\text{and}\quad \overline{m}=m-\widehat{m}n.$$
Then
\begin{equation}
m=\widehat{m}n+\overline{m}.
\mlabel{eq:mdeco}
\end{equation}
and
\begin{equation}
\widehat{m+nq}=\widehat{m}+q\quad \text{and}\quad \overline{m+nq}=\overline{m}\quad\text{for all}\,\, q\in\ZZ.
\mlabel{eq:overhat}
\end{equation}
\begin{prop}\mlabel{prop:intl2}
Let $z_0,z_1,\cdots,z_{n-1}\in HA$ and let $z\in HA$.  Then $z=\intl(z_0,z_1,\cdots,z_{n-1})$ if and only if $z(m)=z_{\overline{m}}(\widehat{m})$ for all $m\geq 0$.
\end{prop}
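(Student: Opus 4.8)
The plan is to unwind Definition~\mref{defn:intl1} coordinate by coordinate, using the Euclidean decomposition $m=\widehat m\,n+\overline m$ from Eq.~(\mref{eq:mdeco}). Set $w:=\intl(z_0,z_1,\cdots,z_{n-1})$. By Eq.~(\mref{eq:intl1}), $w$ is the concatenation of the finite blocks $s_n(0),s_n(1),\cdots$, each block $s_n(k)=\{z_0(k),z_1(k),\cdots,z_{n-1}(k)\}$ having exactly $n$ entries. Hence the block $s_n(k)$ occupies precisely the positions $kn,kn+1,\cdots,kn+(n-1)$ in $w$, and the entry of $w$ at position $kn+r$ with $0\leq r\leq n-1$ equals $z_r(k)$.

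First I would record the elementary index fact: for every $m\in\NN$ one has $m=\widehat m\,n+\overline m$ with $\widehat m\in\NN$ and $\overline m\in\NN_n$, and as $m$ runs over $\NN$ the pair $(\widehat m,\overline m)$ runs over $\NN\times\NN_n$ bijectively (division with remainder by $n$). Feeding this into the previous paragraph with $k=\widehat m$ and $r=\overline m$, position $m$ of $w$ sits in the block $s_n(\widehat m)$ at internal offset $\overline m$, so that $w(m)=z_{\overline m}(\widehat m)$ for every $m\geq 0$; equivalently, this is just reading off the $m$-th coordinate of the displayed expansion in Eq.~(\mref{eq:intl2}).

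Then both implications are immediate. If $z=\intl(z_0,z_1,\cdots,z_{n-1})=w$, then $z(m)=w(m)=z_{\overline m}(\widehat m)$ for all $m\geq 0$. Conversely, if $z(m)=z_{\overline m}(\widehat m)$ for all $m\geq 0$, then $z$ and $w$ agree in every coordinate, and since a Hurwitz series is determined by its sequence of coordinates (cf.\ Eq.~(\mref{eq:lincom})) we conclude $z=w=\intl(z_0,z_1,\cdots,z_{n-1})$.

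The proof is pure bookkeeping; the only point deserving a word of care is the tiling claim of the first paragraph, namely that the consecutive length-$n$ blocks partition $\NN$ with boundaries exactly at multiples of $n$, so that the block containing position $m$ has index $\lfloor m/n\rfloor=\widehat m$ and the offset within it is $m-\lfloor m/n\rfloor n=\overline m$. Everything else is a termwise comparison.
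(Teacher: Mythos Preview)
Your proof is correct and follows essentially the same approach as the paper: both arguments establish the key identity $\intl(z_0,\ldots,z_{n-1})(m)=z_{\overline m}(\widehat m)$ by using the division algorithm to locate position $m$ within the block $s_n(\widehat m)$ at offset $\overline m$. Your version is slightly more explicit in spelling out both directions of the biconditional, whereas the paper leaves the converse (that coordinatewise agreement implies equality of Hurwitz series) implicit.
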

\begin{proof}We shall prove that $\intl(z_0,z_1,\cdots,z_{n-1})(m)=z_{\overline{m}}(\widehat{m})$ for all $m\geq 0$. For any given $m\geq 0$, by the Division Algorithm,
$$ m=n\widehat{m}+\overline{m},\quad \text{where}\,\, 0\leq \overline{m}\leq n-1.$$
By Eq.~(\mref{eq:intl1}), we obtain
$$\intl(z_0,z_1,\cdots,z_{n-1})(n\widehat{m}+\overline{m})\in s_n(\widehat{m})\,\Big(=\{z_0(\widehat{m}),z_1(\widehat{m}),\cdots,z_{n-1}(\widehat{m})\}\Big).$$
By Eq.~(\mref{eq:intl2}) and the equation $\overline{m}=m-n\widehat{m}$,  we have
$$\hspace{4.5cm}\intl(z_0,z_1,\cdots,z_{n-1})(n\widehat{m}+\overline{m})=z_{\overline{m}}(\widehat{m}).\hspace{4.5cm}\mbox{\qedhere}$$
\end{proof}
Then by Eq.~(\mref{eq:lincom}), we get
\begin{coro}\mlabel{coro:intcom}
Let $n\in\NN^+$ be fixed. Let $z_0,z_1,\cdots,z_{n-1}\in HA$ and let $z=\intl(z_0,z_1,\cdots,z_{n-1})$. Then
$$\intl(z_0,z_1,\cdots,z_{n-1})=\sum_{m=0}^\infty z_{\overline{m}}(\widehat{m})\lge m\rge.$$
\end{coro}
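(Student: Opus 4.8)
The plan is to deduce the identity directly from Proposition~\mref{prop:intl2} together with the basis expansion in Eq.~(\mref{eq:lincom}). First I would recall that, since $\{\lge m\rge\mid m\in\NN\}$ is a basis of $HA$, every Hurwitz series $z\in HA$ has the unique expansion $z=\sum_{m=0}^\infty z(m)\lge m\rge$. Applying this to $z=\intl(z_0,z_1,\cdots,z_{n-1})$ reduces the claim to identifying the coefficient $z(m)$ for each $m\geq 0$.

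The key step is then Proposition~\mref{prop:intl2}, which asserts precisely that $z=\intl(z_0,z_1,\cdots,z_{n-1})$ forces $z(m)=z_{\overline{m}}(\widehat{m})$ for all $m\geq 0$, where $\widehat{m}=\lfloor m/n\rfloor$ and $\overline{m}=m-\widehat{m}n$. Substituting this value of $z(m)$ into the expansion above gives $\intl(z_0,z_1,\cdots,z_{n-1})=\sum_{m=0}^\infty z_{\overline{m}}(\widehat{m})\lge m\rge$, which is the asserted formula.

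I do not anticipate any genuine obstacle: the statement is just a reformulation of Proposition~\mref{prop:intl2} in terms of the standard basis $\lge m\rge$, the only additional ingredient being the uniqueness of the coefficients in Eq.~(\mref{eq:lincom}). If anything deserves a remark it is only the consistency of the reindexing --- for a fixed $k\in\NN$ and a fixed residue $r\in\NN_n$, the entry $z_r(k)$ appearing in the block $s_n(k)$ sits at position $m=nk+r$, whence $\widehat{m}=k$ and $\overline{m}=r$ by Eq.~(\mref{eq:overhat}) --- which is exactly the bookkeeping already performed in the proof of Proposition~\mref{prop:intl2}.
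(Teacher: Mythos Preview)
Your proposal is correct and matches the paper's own reasoning exactly: the paper simply states that the corollary follows from Proposition~\mref{prop:intl2} together with Eq.~(\mref{eq:lincom}), which is precisely the basis-expansion-plus-coefficient-identification argument you outline. Nothing further is needed.
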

Furthermore, by Proposition~\mref{prop:basismn}~(\mref{it:mnpro}), we have
\begin{coro}\mlabel{coro:kinton}
Let $n\in\NN^+$ be fixed. Let $z_0,z_1,\cdots,z_{n-1}\in HA$ and let $z=\intl(z_0,z_1,\cdots,z_{n-1})$. For all $k\in\NN$,
\begin{enumerate}
\item\mlabel{it:kintn}
\begin{equation*}\lge k\rge\,\intl(z_0,z_1,\cdots,z_{n-1})=\sum_{m=0}^\infty z_{\overline{m}}(\widehat{m}){k+m\choose k}\lge k+m\rge.\end{equation*}
\item\mlabel{it:kplus1}
\begin{equation*}\lge k\rge\,\intl(z_0,z_1,\cdots,z_{n-1})=\frac{1}{k!}\lge 1\rge^{k}\,\intl(z_0,z_1,\cdots,z_{n-1}).
\end{equation*}
\end{enumerate}
\end{coro}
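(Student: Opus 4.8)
The plan is to read off both items directly from the explicit expansion of $\intl(z_0,z_1,\cdots,z_{n-1})$ recorded in Corollary~\mref{coro:intcom} together with the basis multiplication rule of Proposition~\mref{prop:basismn}~(\mref{it:proba}); no new machinery is needed.

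For item~(\mref{it:kintn}) I would start from
\[
\intl(z_0,z_1,\cdots,z_{n-1})=\sum_{m=0}^\infty z_{\overline{m}}(\widehat{m})\,\lge m\rge ,
\]
multiply on the left by $\lge k\rge$, and move $\lge k\rge$ inside the sum. This is permitted because each coefficient $z_{\overline{m}}(\widehat{m})$ is a scalar of $A$ and because the sum is finite in each coordinate of $HA$: the summand $z_{\overline{m}}(\widehat{m})\lge k\rge\lge m\rge$ lives in coordinate $k+m$, so in any fixed coordinate $N$ only the term $m=N-k$ contributes (and none when $N<k$). Applying $\lge k\rge\lge m\rge=\binom{k+m}{k}\lge k+m\rge$ termwise then yields exactly $\sum_{m=0}^\infty z_{\overline{m}}(\widehat{m})\binom{k+m}{k}\lge k+m\rge$. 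If one prefers to avoid the infinite-sum manipulation entirely, the same identity can be checked coordinatewise using Proposition~\mref{prop:intl2} together with the product formula in Eq.~(\mref{eq:Hproduct}), since $\lge k\rge(i)=\delta_{i,k}$ forces $(\lge k\rge\cdot z)(N)=\binom{N}{k}z(N-k)$ for $N\ge k$.

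For item~(\mref{it:kplus1}) the heart of the matter is the identity $\lge 1\rge^{\,k}=k!\,\lge k\rge$ in $HA$, which I would prove by induction on $k\ge 0$. The base case $k=0$ is $\lge 1\rge^{\,0}=1=\lge 0\rge$, and the inductive step uses Proposition~\mref{prop:basismn}~(\mref{it:proba}) once more:
\[
\lge 1\rge^{\,k+1}=\lge 1\rge^{\,k}\lge 1\rge=k!\,\lge k\rge\lge 1\rge=k!\binom{k+1}{k}\lge k+1\rge=(k+1)!\,\lge k+1\rge .
\]
Dividing by $k!$ gives $\lge k\rge=\frac{1}{k!}\lge 1\rge^{\,k}$, and multiplying through by $\intl(z_0,\cdots,z_{n-1})$ gives the claimed formula. (The genuine content is the integer identity $k!\,\lge k\rge=\lge 1\rge^{\,k}$; the $1/k!$ in the statement is read in a base ring where $k!$ is invertible, or one simply states the identity as $k!\,\lge k\rge\,\intl(\cdots)=\lge 1\rge^{\,k}\,\intl(\cdots)$.)

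I do not expect a serious obstacle: each item reduces to a single invocation of Corollary~\mref{coro:intcom} and Proposition~\mref{prop:basismn}~(\mref{it:proba}), plus the short induction above. The only two points requiring a word of care are the coordinatewise-finiteness remark that licenses pushing $\lge k\rge$ through the infinite sum in item~(\mref{it:kintn}), and the harmless convention about inverting $k!$ in item~(\mref{it:kplus1}).
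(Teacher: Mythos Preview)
Your approach is correct and matches the paper's: the corollary is stated there with no separate proof, just the prefatory sentence ``Furthermore, by Proposition~\mref{prop:basismn}~(\mref{it:mnpro}), we have,'' so both items are meant to follow exactly as you describe---item~(\mref{it:kintn}) from Corollary~\mref{coro:intcom} multiplied termwise using $\lge k\rge\lge m\rge=\binom{k+m}{k}\lge k+m\rge$, and item~(\mref{it:kplus1}) from the consequence $\lge 1\rge^{k}=k!\,\lge k\rge$ of that same product rule. Your remarks on coordinatewise finiteness and on the need for $k!$ to be invertible are more careful than the paper itself, which simply writes $\tfrac{1}{k!}$ without comment.
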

Denote by $HA^n$ the direct product $\underbrace{HA\times \cdots\times HA}_n$. We know that $HA^n$ together with addition and multiplication defined termwise, and  scalar multiplication $\cdot: A\times HA^n\to HA^n$, $(a,(f_0,f_1,\cdots,f_{n-1}))\mapsto (af_0,af_1,\cdots,af_{n-1})$ is an $A$-algebra.
\begin{lemma}~\cite[Lemma 3.3]{GK1}\mlabel{lem:intlin}
Let $n\in \NN^+$. Let $u:=(u_0,u_1,\cdots,u_{n-1})\in HA^n$ and let $v:=(v_0,v_1,\cdots,v_{n-1})\in HA^n$. Then
\begin{enumerate}
\item
$\intl(u+v)=\intl(u)+\intl(v)$.
\mlabel{it:addhom}
\item For any $a\in A$,
$a\,\intl(u)=\intl(au)$.
\mlabel{it:klin}
\item
For any given $0\leq i\leq n$,
\begin{equation}
\partial^i(\intl(u_0,u_1,\cdots,u_{n-1}))=\intl(u_i,u_{i+1},\cdots,u_{n+i-1}),
\end{equation}
where $u_{n+\ell}=\partial(u_\ell)$ for $0\leq \ell\leq i-1$.
\mlabel{it:derivation}
\item
$$\int\intl(u_0,u_1,\cdots,u_{n-1})=\intl\Big(\int u_{n-1},u_0,\cdots,u_{n-2}\Big).$$
\mlabel{it:intlac}
\end{enumerate}
\end{lemma}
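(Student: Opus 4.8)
The plan is to reduce all four items to the pointwise description of interlacing given in Proposition~\mref{prop:intl2} --- that $z=\intl(z_0,\cdots,z_{n-1})$ if and only if $z(m)=z_{\overline{m}}(\widehat{m})$ for every $m\ge 0$ --- combined with the residue arithmetic of Eq.~(\mref{eq:overhat}) and the defining formulas $\partial(f)(m)=f(m+1)$ for all $m$, $\int(f)(0)=0$, and $\int(f)(m)=f(m-1)$ for $m\ge 1$; iterating the first of these yields $\partial^i(f)(m)=f(m+i)$. Items~(\mref{it:addhom}) and~(\mref{it:klin}) are then immediate. For Item~(\mref{it:addhom}), evaluate both sides at an arbitrary $m$ and apply Proposition~\mref{prop:intl2} three times: $\intl(u+v)(m)=(u_{\overline{m}}+v_{\overline{m}})(\widehat{m})=u_{\overline{m}}(\widehat{m})+v_{\overline{m}}(\widehat{m})=\intl(u)(m)+\intl(v)(m)$. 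For Item~(\mref{it:klin}), note that a constant $a\in A$ acts on $HA$ by termwise multiplication --- this is the product rule~(\mref{eq:Hproduct}) applied to $z$ and $(a,0,0,\cdots)$ --- whence $a\,\intl(u)(m)=a\,u_{\overline{m}}(\widehat{m})=(a\,u_{\overline{m}})(\widehat{m})=\intl(au_0,\cdots,au_{n-1})(m)$.

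For Item~(\mref{it:derivation}) I would compare the two sides pointwise for a general $i$ with $0\le i\le n$. By the shift formula above, the left side at $m$ is $\partial^i(\intl(u))(m)=\intl(u)(m+i)=u_{\overline{m+i}}(\widehat{m+i})$; writing $m+i=n\widehat{m}+(\overline{m}+i)$ one splits into the case $\overline{m}+i\le n-1$, where $\widehat{m+i}=\widehat{m}$ and $\overline{m+i}=\overline{m}+i$, so the value is $u_{\overline{m}+i}(\widehat{m})$, and the case $\overline{m}+i\ge n$, where $0\le\overline{m}+i-n\le n-1$ (because $\overline{m}\le n-1$ and $i\le n$) forces $\widehat{m+i}=\widehat{m}+1$ and $\overline{m+i}=\overline{m}+i-n$, so the value is $u_{\overline{m}+i-n}(\widehat{m}+1)=\partial(u_{\overline{m}+i-n})(\widehat{m})$. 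On the right side, the $\overline{m}$-th entry of the tuple $(u_i,\cdots,u_{n+i-1})$ is $u_{i+\overline{m}}$, which under the convention $u_{n+\ell}=\partial(u_\ell)$ is the undifferentiated $u_{i+\overline{m}}$ when $i+\overline{m}\le n-1$ and is $\partial(u_{i+\overline{m}-n})$ when $i+\overline{m}\ge n$; evaluating at $\widehat{m}$ reproduces exactly the two expressions above. (Equivalently one may first prove the case $i=1$, namely $\partial(\intl(u_0,\cdots,u_{n-1}))=\intl(u_1,\cdots,u_{n-1},\partial u_0)$, and iterate, each step rotating the tuple one slot to the left and replacing the entry that falls off the front by its derivative; the boundary case $i=n$ is included and reads $\partial^n(\intl(u))=\intl(\partial u_0,\cdots,\partial u_{n-1})$.)

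Item~(\mref{it:intlac}) is the same kind of computation: $\int\intl(u)$ vanishes at $m=0$, and for $m\ge 1$ equals $\intl(u)(m-1)=u_{\overline{m-1}}(\widehat{m-1})$; if $\overline{m}\ge 1$ then $m-1=n\widehat{m}+(\overline{m}-1)$, so this is $u_{\overline{m}-1}(\widehat{m})$, while if $\overline{m}=0$ then $\widehat{m}\ge 1$ and $m-1=n(\widehat{m}-1)+(n-1)$, so this is $u_{n-1}(\widehat{m}-1)=(\int u_{n-1})(\widehat{m})$. Comparing with $\intl(\int u_{n-1},u_0,\cdots,u_{n-2})$, whose $m$-th term is $(\int u_{n-1})(\widehat{m})$ when $\overline{m}=0$ (which is $0$ for $\widehat{m}=0$ and $u_{n-1}(\widehat{m}-1)$ for $\widehat{m}\ge 1$) and $u_{\overline{m}-1}(\widehat{m})$ when $\overline{m}\ge 1$, all cases agree. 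I expect the only genuine effort to be the index bookkeeping in Item~(\mref{it:derivation}) --- keeping the convention $u_{n+\ell}=\partial(u_\ell)$ consistent through the split $i+\overline{m}\le n-1$ versus $i+\overline{m}\ge n$ (or through the iteration in the alternative argument) --- together with the $m=0$ corner of Item~(\mref{it:intlac}); everything else is a one-line application of Proposition~\mref{prop:intl2}.
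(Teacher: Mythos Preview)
Your proposal is correct. The paper does not supply its own proof of this lemma --- it simply cites \cite[Lemma~3.3]{GK1} --- so there is nothing to compare against; your pointwise verification via Proposition~\mref{prop:intl2} and the residue arithmetic of Eq.~(\mref{eq:overhat}) is a clean and complete argument.
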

\delete{
\begin{coro}
For every $0\leq m\leq n$,
\begin{equation}
\underbrace{\int\cdots\int}_m\intl(u_0,u_1,\cdots,u_{n-1}))=\intl(\int u_{n-m},\int u_{n-m+1},\cdots,\int u_{n-1}, u_0,\cdots,u_{n-m-1}).
\mlabel{eq:integral}
\end{equation}
\end{coro}
\begin{proof}
Use induction on $m\geq 0$.  The basic case $m=0$ is trivial. Assume Eq.~(\mref{eq:integral}) has been proved for $m\leq \ell<n$.  Consider $m=\ell +1$.
\begin{eqnarray*}
&&\underbrace{\int\cdots\int}_{\ell+1}\intl(u_0,u_1,\cdots,u_{n-1}))\\
&=&\int\intl(\int u_{n-\ell},\int u_{n-\ell+1},\cdots,\int u_{n-1}, u_0,\cdots,u_{n-\ell-1})\quad(\text{by the induction hypothesis)})\\
&=&\intl(\int u_{n-\ell-1},\int u_{n-\ell},\cdots,\int u_{n-1}, u_0,\cdots,u_{n-\ell-2}).\quad(\text{by Lemma~\mref{lem:intlin}(\mref{it:intlac}))})
\end{eqnarray*}
\end{proof}
}
\vspace{-8mm}
\begin{prop}\mlabel{prop:intmfold}
For every $m\geq 0$, we have
\begin{equation}
\int^m\intl(u_0,u_1,\cdots,u_{n-1})
=\intl\Big(\int^{\widehat{m}+1}u_{n-\overline{m}},\cdots,\int^{\widehat{m}+1} u_{n-1},\int^{\widehat{m}} u_0,\cdots,\int^{\widehat{m}}u_{n-\overline{m}-1}\Big).
\mlabel{eq:intmfold}
\end{equation}
\end{prop}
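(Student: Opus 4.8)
The plan is to induct on $m\ge 0$, using Lemma~\mref{lem:intlin}(\mref{it:intlac}) --- which sends $\intl(w_0,w_1,\cdots,w_{n-1})$ to $\intl\big(\int w_{n-1},w_0,\cdots,w_{n-2}\big)$ --- as the engine for the inductive step. The base case $m=0$ is immediate: then $\widehat m=\overline m=0$, so, reading the block $\int^{\widehat m+1}u_{n-\overline m},\cdots,\int^{\widehat m+1}u_{n-1}$ as empty when $\overline m=0$, the right-hand side of Eq.~(\mref{eq:intmfold}) is just $\intl(u_0,u_1,\cdots,u_{n-1})$.

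For the inductive step I would write $\int^{m+1}\intl(u_0,\cdots,u_{n-1})=\int\big(\int^m\intl(u_0,\cdots,u_{n-1})\big)$, substitute the induction hypothesis into the inner $\int^m$ to obtain an explicit $\intl(w_0,\cdots,w_{n-1})$, and then apply Lemma~\mref{lem:intlin}(\mref{it:intlac}). It then remains to check that the resulting interlacing equals the right-hand side of Eq.~(\mref{eq:intmfold}) with $m$ replaced by $m+1$, and here I would split according to how $\widehat{\cdot}$ and $\overline{\cdot}$ change under $m\mapsto m+1$ (cf. Eq.~(\mref{eq:overhat})). If $\overline m\le n-2$, then $\widehat{m+1}=\widehat m$ and $\overline{m+1}=\overline m+1$; the last entry $w_{n-1}$ of the induction hypothesis is $\int^{\widehat m}u_{n-1-\overline m}$, and cycling it to the front with one extra $\int$ turns it into $\int^{\widehat m+1}u_{n-(\overline m+1)}$, exactly the new leading entry, while the remaining entries shift one slot and land on the two new blocks. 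If instead $\overline m=n-1$, then $\widehat{m+1}=\widehat m+1$, $\overline{m+1}=0$, the last entry is $\int^{\widehat m}u_0$, and applying $\int$ bumps every integration count from $\widehat m$ to $\widehat m+1$, producing $\intl\big(\int^{\widehat m+1}u_0,\cdots,\int^{\widehat m+1}u_{n-1}\big)$, which is the right-hand side of Eq.~(\mref{eq:intmfold}) for $m+1$. Throughout one must remember the $\overline m=0$ boundary (first block empty) and the single-slot degenerate case $n=1$.

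An alternative, possibly shorter, route bypasses induction and compares the two sides entrywise via Proposition~\mref{prop:intl2}. Since $(\int^m z)(\ell)=z(\ell-m)$ for $\ell\ge m$ and $0$ otherwise, and $\intl(u_0,\cdots,u_{n-1})(k)=u_{\overline k}(\widehat k)$, the $\ell$-th entry of the left-hand side is $u_{\overline{\ell-m}}(\widehat{\ell-m})$ when $\ell\ge m$ and $0$ otherwise; the $\ell$-th entry of the right-hand side is read off from the two blocks in Eq.~(\mref{eq:intmfold}) according to whether $\overline\ell<\overline m$ or $\overline\ell\ge\overline m$. Writing $\ell-m=(\widehat\ell-\widehat m)n+(\overline\ell-\overline m)$ and, in the case $\overline\ell<\overline m$, rebalancing this as $(\widehat\ell-\widehat m-1)n+(n+\overline\ell-\overline m)$ with remainder in $\NN_n$, one checks directly that $\widehat{\ell-m}$ and $\overline{\ell-m}$ match the block and integration indices on the right in each case, and that the vanishing ranges ($\ell<m$ versus $\widehat\ell<\widehat m$ or $\widehat\ell<\widehat m+1$) agree.

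Either way the proof is not conceptually difficult; the only obstacle is bookkeeping. The step that needs care is keeping the two regimes $\overline m<n-1$ and $\overline m=n-1$ (equivalently, in the direct approach, $\overline\ell\ge\overline m$ versus $\overline\ell<\overline m$) cleanly separated, since they differ by exactly one extra integration ($\widehat m$ versus $\widehat m+1$), and verifying that the index ranges $n-\overline m,\cdots,n-1$ and $0,\cdots,n-\overline m-1$ appearing in Eq.~(\mref{eq:intmfold}) behave correctly at the boundaries $\overline m=0$ and $\overline m=n-1$.
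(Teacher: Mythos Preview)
Your inductive argument is correct and matches the paper's proof essentially step for step: induction on $m$, invoking Lemma~\mref{lem:intlin}(\mref{it:intlac}) for the inductive step, and the same case split on whether $\overline m\le n-2$ or $\overline m=n-1$. The alternative entrywise comparison via Proposition~\mref{prop:intl2} that you sketch is not used in the paper, but it is sound and would also establish the result.
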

\begin{proof}
Use induction on $m\geq 0$.  The case $m=0$ is trivial. Assume Eq.~(\mref{eq:intmfold}) has been proved for $m\leq \ell$ with $\ell\geq 0$.  Consider $m=\ell+1$. \begin{eqnarray*}
&&\int^{\ell +1}\intl(u_0,u_1,\cdots,u_{n-1})\\
&=&\int\bigg(\int^{\ell}\intl(u_0,u_1,\cdots,u_{n-1})\bigg)\\
&=&\int\intl(\int^{\widehat{\ell}+1}u_{n-\overline{\ell}},\int^{\widehat{m}+1} u_{n-\overline{\ell}+1},\cdots,\int^{\widehat{\ell}+1} u_{n-1},\int^{\widehat{\ell}} u_0,\cdots,\int^{\widehat{\ell}}u_{n-\overline{\ell}-1})\\
&&\quad\quad\quad\quad\quad\quad(\text{by the induction hypothesis})\\
&=&\intl(\int^{\widehat{\ell}+1}u_{n-\overline{\ell}-1},\int^{\widehat{\ell}+1}u_{n-\overline{\ell}},\int^{\widehat{\ell}+1} u_{n-\overline{\ell}+1},\cdots,\int^{\widehat{\ell}+1} u_{n-1},\int^{\widehat{\ell}} u_0,\cdots,\int^{\widehat{\ell}}u_{n-\overline{\ell}-2}).\\
&&\quad\quad\quad\quad\quad\quad(\text{by Lemma~\mref{lem:intlin}~(\mref{it:intlac})})
\end{eqnarray*}
Two cases arise.
\smallskip

\noindent
{\bf Case 1.  $\overline{\ell}=n-1$}. Then $\overline{\ell+1}=0$ and $\ell+1=(\hat{\ell}+1)n$ by the equation $\ell=\hat{\ell}n+\overline{\ell}$, and so $\widehat{\ell+1}=\hat{\ell}+1$. Thus, we have
\begin{eqnarray*}
&&\int^{\ell +1}\intl(u_0,u_1,\cdots,u_{n-1})\\
&=&\intl(\int^{\widehat{\ell}+1}u_{n-\overline{\ell}-1},\int^{\widehat{\ell}+1}u_{n-\overline{\ell}},\int^{\widehat{\ell}+1} u_{n-\overline{\ell}+1},\cdots,\int^{\widehat{\ell}+1} u_{n-1},\int^{\widehat{\ell}} u_0,\cdots,\int^{\widehat{\ell}}u_{n-\overline{\ell}-2})\\
&=&\intl(\int^{\widehat{\ell}+1}u_{0},\int^{\widehat{\ell}+1}u_{1},\int^{\widehat{\ell}+1} u_{n-\overline{\ell}+1},\cdots,\int^{\widehat{\ell}+1} u_{n-1})\\
&=&\intl(\int^{\widehat{\ell+1}}u_{0},\int^{\widehat{\ell+1}}u_{1},\int^{\widehat{\ell+1}} u_2,\cdots,\int^{\widehat{\ell+1}} u_{n-1}).
\end{eqnarray*}
\smallskip

\noindent
{\bf Case 2.  $0\leq\overline{\ell}\leq n-2$}. Then $1\leq \overline{\ell+1}=\overline{\ell}+1\leq n-1$ and  $\widehat{\ell+1}=\hat{\ell}$. Hence, we get
\begin{eqnarray*}
&&\int^{\ell +1}\intl(u_0,u_1,\cdots,u_{n-1})\\
&=&\intl(\int^{\widehat{\ell}+1}u_{n-\overline{\ell}-1},\int^{\widehat{\ell}+1}u_{n-\overline{\ell}},\int^{\widehat{\ell}+1} u_{n-\overline{\ell}+1},\cdots,\int^{\widehat{\ell}+1} u_{n-1},\int^{\widehat{\ell}} u_0,\cdots,\int^{\widehat{\ell}}u_{n-\overline{\ell}-2})\\
&=&\intl(\int^{\widehat{\ell+1}+1}u_{n-\overline{\ell}-1},\int^{\widehat{\ell+1}+1}u_{n-\overline{\ell}},\int^{\widehat{\ell+1}+1} u_{n-\overline{\ell}+1},\cdots,\int^{\widehat{\ell+1}+1} u_{n-1},\int^{\widehat{\ell+1}} u_0,\cdots,\int^{\widehat{\ell+1}}u_{n-\overline{\ell}-2})\\
&=&\intl(\int^{\widehat{\ell+1}+1}u_{n-\overline{\ell+1}},\int^{\widehat{\ell+1}+1} u_{n-\overline{\ell+1}+1},\cdots,\int^{\widehat{\ell+1}+1} u_{n-1},\int^{\widehat{\ell+1}} u_0,\cdots,\int^{\widehat{\ell+1}}u_{n-\overline{\ell+1}-1}).
\end{eqnarray*}
This completes the induction, and thus the proof of Eq.~(\mref{eq:intmfold}).
\end{proof}

\subsection{Unlacing of Hurwitz series}
\mlabel{subsec:Hadpro}

According to Items~(\mref{it:addhom}) and ~(\mref{it:klin}) of Lemma~\mref{lem:intlin},  $\intl:HA^n\to HA$ is a $A$-linear map. We next define an inverse of the map $\intl$.
\begin{defn}
Let $n\in\NN^+$.  Let $f\in HA$. A map from $HA$ to $ HA^n$
defined by
\begin{equation}
f\mapsto (f_0,f_1,\cdots,f_{n-1}),
\end{equation}
where for $0\leq i\leq n-1$,  $f_i$ is defined by
\begin{equation}
f_i(k):=f(kn+i)\quad \text{for all}\,\, k\geq 0,
\mlabel{eq:unlacing}
\end{equation}
is called the {\bf unlacing} of $f$ and is denoted by $\unl:HA\to HA^n$.
\mlabel{defn:unlacing}
\end{defn}

\begin{exam} Let $f=(0,1,\cdots,k,\cdots)\in HA$. Let $n=3$. Then
$\unl(f)=(f_0,f_1,f_2)\in HA^3$, where $f_0:=(0,3,6,\cdots,3k,\cdots)$, $f_1:=(1,4,7,\cdots,3k+1,\cdots)$ and $f_2:=(2,5,8,\cdots,3k+2,\cdots)$.
\end{exam}
Now we can see that the map $\unl:HA\to HA^n$ is also an $A$-linear map.
\begin{lemma}Let $n\in\NN^+$. Let $f,g\in HA$. Then
\begin{enumerate}
\item
$\unl(f+g)=\unl(f)+\unl(g).$
\mlabel{it:unllin}
\item For every $a\in A$,
$a\,\unl(f)=\unl(af)$.
\mlabel{it:kunl}
\end{enumerate}
\end{lemma}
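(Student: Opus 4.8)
The plan is to verify both identities directly and coordinate-wise from the defining formula~(\mref{eq:unlacing}) of the unlacing map, using only that addition and scalar multiplication by elements of $A$ act entrywise on Hurwitz series.

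First I would unwind the notation: for $h\in HA$ write $\unl(h)=(h_0,h_1,\dots,h_{n-1})$ with $h_i(k)=h(kn+i)$ for all $k\ge 0$, as in Definition~\mref{defn:unlacing}. For part~(\mref{it:unllin}), since addition in $HA$ is entrywise, $(f+g)(kn+i)=f(kn+i)+g(kn+i)=f_i(k)+g_i(k)$ for every $0\le i\le n-1$ and every $k\ge 0$; hence the $i$th component of $\unl(f+g)$ equals $f_i+g_i$, and because addition in $HA^n$ is defined componentwise this says exactly $\unl(f+g)=\unl(f)+\unl(g)$.

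For part~(\mref{it:kunl}), the one point worth spelling out is that multiplication by a constant $a\in A$ is entrywise on $HA$: identifying $a$ with the Hurwitz series $(a,0,0,\dots)$ and applying the product formula~(\mref{eq:Hproduct}), one gets $(af)(m)=\sum_{j=0}^{m}\binom{m}{j}a(j)f(m-j)=a\,f(m)$ because $a(j)=0$ for $j\ge 1$. Granting this, the $i$th component of $\unl(af)$ sends $k$ to $(af)(kn+i)=a\,f(kn+i)=a\,f_i(k)$, so it equals $af_i$; since scalar multiplication on $HA^n$ is componentwise, $\unl(af)=(af_0,\dots,af_{n-1})=a\,\unl(f)$.

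There is no genuine obstacle here: the statement is a formal consequence of the definitions, the only mild care being the identification in the previous paragraph of the $A$-action on $HA$ with the entrywise one (alternatively one may simply take the entrywise action as the $A$-module structure on $HA$, and the same computation applies). Combined with Lemma~\mref{lem:intlin}(\mref{it:addhom})--(\mref{it:klin}), this shows that $\unl$ and $\intl$ are $A$-linear maps in opposite directions, as needed for treating $\unl$ as an inverse of $\intl$.
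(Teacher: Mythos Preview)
Your proof is correct and follows essentially the same coordinate-wise verification as the paper, computing the $i$th component of $\unl(f+g)$ and $\unl(af)$ directly from Definition~\mref{defn:unlacing}. Your extra remark deriving $(af)(m)=a\,f(m)$ from the Hurwitz product~(\mref{eq:Hproduct}) is a helpful clarification that the paper leaves implicit.
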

\begin{proof}
\smallskip

\noindent
(\mref{it:unllin}) Suppose that $\unl(f+g)=(h_0,h_1,\cdots,h_{n-1})$. Then by the definition of unlacing, for $0\leq i\leq n-1$,
$h_i(k)=(f+g)(kn+i)$ for all $k\geq 0$. By the definition of unlacing again,
$\unl(f)+\unl(g)=(f_0+g_0,f_1+g_1,\cdots,f_{n-1}+g_{n-1})$,
where for $0\leq i\leq n-1$,$f_i(k)=f(nk+i)$ and $g_i(k)=g(nk+i)$ for all $k\geq 0$.
So $(f_i+g_i)(k)=f(nk+i)+g(nk+i)=h_i(k)$ for all $k\geq 0$. Thus, $\unl(f+g)=\unl(f)+\unl(g)$.
\smallskip

\noindent
(\mref{it:kunl})
Let $a\in A$. Suppose that $\unl(af)=(h_0,h_1,\cdots,h_{n-1})$. Then $h_i(k)=(af)(nk+i)=af(nk+i)$. Since $a\,\unl(f)=(af_0,af_1,\cdots,af_{n-1})$, $h_i(k)=af_i(k)$ for all $k\geq 0$, and thus $a\,\unl(f)=\unl(af)$.
\end{proof}
The next observation gives the fact that the map unlacing is an inverse of interlacing.
\begin{prop}
Let $n\in\NN^+$. Then $\intl:HA^n\to HA$ is a bijection, and so is $\unl:HA\to HA^n$.
\mlabel{prop:intunl}
\end{prop}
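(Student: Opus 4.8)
The plan is to show that $\intl$ and $\unl$ are mutually inverse maps, which immediately gives that each is a bijection. Concretely, I would prove the two identities $\unl \circ \intl = \id_{HA^n}$ and $\intl \circ \unl = \id_{HA}$, working entirely at the level of sequence entries via the characterizations already established: Proposition~\mref{prop:intl2} describes $\intl$ by the formula $z(m) = z_{\overline{m}}(\widehat{m})$, and Definition~\mref{defn:unlacing} describes $\unl$ by $f_i(k) = f(kn+i)$.

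First I would verify $\unl \circ \intl = \id$. Take $(z_0,\dots,z_{n-1}) \in HA^n$, set $z = \intl(z_0,\dots,z_{n-1})$, and let $\unl(z) = (f_0,\dots,f_{n-1})$. For fixed $i \in \NN_n$ and $k \in \NN$, we have $f_i(k) = z(kn+i)$ by definition of unlacing. Now $m = kn+i$ has $\widehat{m} = k$ and $\overline{m} = i$ (since $0 \le i \le n-1$), so Proposition~\mref{prop:intl2} gives $z(kn+i) = z_{\overline{kn+i}}(\widehat{kn+i}) = z_i(k)$. Hence $f_i = z_i$ for every $i$, i.e. $\unl(\intl(z_0,\dots,z_{n-1})) = (z_0,\dots,z_{n-1})$.

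Next I would verify $\intl \circ \unl = \id$. Take $f \in HA$, set $(f_0,\dots,f_{n-1}) = \unl(f)$, and let $g = \intl(f_0,\dots,f_{n-1})$. For any $m \in \NN$, Proposition~\mref{prop:intl2} gives $g(m) = f_{\overline{m}}(\widehat{m})$, and by definition of unlacing this equals $f(\widehat{m}\,n + \overline{m})$, which is exactly $f(m)$ by Eq.~(\mref{eq:mdeco}). So $g = f$, i.e. $\intl(\unl(f)) = f$. Having both composites equal to the identity, I conclude that $\intl$ is a bijection with inverse $\unl$, and symmetrically $\unl$ is a bijection with inverse $\intl$.

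I do not anticipate any genuine obstacle here: the statement is essentially a bookkeeping consequence of the division-algorithm decomposition $m = \widehat{m}\,n + \overline{m}$ together with the two explicit entrywise descriptions. The only point requiring a moment's care is the uniqueness of that decomposition (so that $\widehat{kn+i} = k$ and $\overline{kn+i} = i$ when $0 \le i \le n-1$), which is already recorded in Eq.~(\mref{eq:overhat}) and the surrounding discussion. One could alternatively phrase the whole argument as: $\intl$ is surjective because $f = \intl(\unl(f))$ for all $f$, and injective because $\unl$ recovers the arguments; but presenting both composites explicitly is cleanest and makes the inverse relationship manifest.
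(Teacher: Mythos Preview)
Your proof is correct and follows essentially the same approach as the paper: both arguments verify directly that $\intl$ and $\unl$ are mutually inverse by checking the two composites $\unl\circ\intl=\id_{HA^n}$ and $\intl\circ\unl=\id_{HA}$ entrywise, using Proposition~\mref{prop:intl2} and the definition of $\unl$. The only cosmetic difference is the order in which the two composites are treated.
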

\begin{proof}We shall prove that $\intl\circ \unl=\id_{HA}$ and $\unl \circ\intl =\id_{HA^n}$. For any given $h\in HA$ and every $0\leq i\leq n-1$,
we let $h_i(k):=h(nk+i)$ for all $k\geq 0$. Then $h_0,h_1,\cdots,h_{n-1}\in HA$. By the definitions of unlacing and interlacing,
\begin{eqnarray*}
(\intl\circ\unl)(h)&=&\intl (\unl(h))\\
&=&\intl(h_0,h_1,\cdots,h_{n-1})\\
&=&h.
\end{eqnarray*}
On the other hand, let $f:=(f_0,f_1,\cdots,f_{n-1})\in HA^n$. Then
\begin{eqnarray*}
(\unl\circ\intl)(f)&=&\unl(\intl(f))\\
&=&(g_0,g_1,\cdots,g_{n-1}),
\end{eqnarray*}
where for $0\leq i\leq n-1$,
$$g_i(k):=\intl(f)(nk+i)$$
 for all $k\geq 0$.
 Since
 \begin{eqnarray*}
 \intl(f)(nk+i)&=&f_{\bar{nk+i}}(\widehat{nk+i})\\
 &=&f_i(k+\widehat{i})=f_i(k).
  \end{eqnarray*}
 and thus $g_i=f_i$ for all $0\leq i\leq n-1$. This means that $(\unl\circ\intl)(f)=f$.
\end{proof}
\subsection{Hadamard product}
\mlabel{subsec:Hadpro1}

As in ~\cite{Shar}, we also give the definition of Hadamard products of Hurwitz series as follows.
\begin{defn}\mlabel{defn:had}
Let $f,g\in HA$. The {\bf Hadamard product} of $f$ and $g$, denoted by $f\bx g$, is the Hurwitz series defined by
\begin{equation}
(f\bx g)(k)=f(k)g(k)\quad \text{for all}\,\,k\in\NN.
\mlabel{eq:hada}
\end{equation}
\end{defn}
To avoid confusion,
the usual Hurwitz product is denoted by juxtaposition of Hurwitz series, and we will always use $\bx$
for the Hadamard product.
\begin{defn}
Let $n\in\NN^+$. For every $1\leq \tau\leq n$, define a Hurwitz series $\tau_n\in HA$ by defining
\begin{equation}
\tau_n(k)=kn+\tau\quad \text{for all}\,\, k\in\NN.
\end{equation}
In other words, $\tau_n:=(\tau,n+\tau,2n+\tau,\cdots)$.
\end{defn}
By Definition~\mref{defn:had},  for any $f\in HA$,
\begin{equation}
(\tau_n\bx f)(k)=(kn+\tau)f(k).
\mlabel{eq:tauf}
\end{equation}
We next explore some basics properties of Hadamard products.
Denote $\bar{1}:=(1,1,1,\cdots)$.
\begin{prop}
Let $f,g,h\in HA$. Let $i\in\NN$.
\begin{enumerate}
\item $f\bx g=g\bx f$\,\,$(${\bf commutative laws}$)$.
\mlabel{it:com}
\item $(f\bx g)\bx h=f\bx (g\bx h)$\,\,$(${\bf associative laws}$)$.
\mlabel{it:ass}
\item $(f+g)\bx h=f\bx h+g\bx h$\,\,$(${\bf distributive laws}$)$.
\mlabel{it:dis}
\item For all $a\in A$, $a (f\bx g)=(af)\bx g=f\bx (ag)$.
\mlabel{it:liner}
\item$\bar{1}\bx f=f\bx \bar{1}=f$\,\,$(${\bf Hadamard identity}$)$.
\mlabel{it:iden}
\item
$\partial^i(f\bx g)=\partial^i(f)\bx \partial^i(g)$.
\mlabel{it:parcom}
\item
$\int^i(f\bx g)=\int^i f\bx\int^i g$.
\mlabel{it:intcom}
\end{enumerate}
\end{prop}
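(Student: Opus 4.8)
The plan is to derive all seven identities from the single defining formula $(f\bx g)(k)=f(k)g(k)$ together with the ring axioms of $A$; the two ``calculus'' items (f) and (g) additionally use the explicit description of $\partial_A$ and $\int$ on coefficients. For items (a)--(e) I would fix an index $k\in\NN$ and evaluate both sides at $k$: items (a), (b) and (d) then reduce, respectively, to commutativity, associativity and $A$-bilinearity of multiplication in $A$ applied to $f(k),g(k),h(k)$; item (c) reduces to the distributive law in $A$; and item (e) reduces to $1\cdot f(k)=f(k)$, since $\bar 1(k)=1$ for every $k$. Nothing beyond coordinatewise comparison is needed for these.

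For item (f), the key remark is that $\partial_A^i(h)(k)=h(k+i)$ for every $h\in HA$ and all $k\geq 0$, which is immediate from the definition of $\partial_A$ (or a one-line induction on $i$). Then for all $k$,
\[
\partial_A^i(f\bx g)(k)=(f\bx g)(k+i)=f(k+i)\,g(k+i)=\partial_A^i(f)(k)\,\partial_A^i(g)(k)=\big(\partial_A^i(f)\bx\partial_A^i(g)\big)(k),
\]
which is exactly (f).

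For item (g) one must attend to the zero padding introduced by $\int$: from the definition, $\int^i(h)(k)=0$ for $0\leq k\leq i-1$ and $\int^i(h)(k)=h(k-i)$ for $k\geq i$. For indices $k\geq i$ both sides of (g) evaluate to $f(k-i)g(k-i)$, while for $k<i$ the left side is $0$ by definition and the right side is $0\cdot 0=0$ since both factors already vanish; hence the two series agree coordinatewise. A cleaner alternative that avoids the case split is to observe that $\int^i(f\bx g)$ and $\int^i f\bx\int^i g$ both have all of their first $i$ coordinates equal to $0$, and that applying $\partial_A^i$ sends each of them to $f\bx g$: the first because $\partial_A^i\circ\int^i=\id_{HA}$ (a consequence of $\partial_A\circ\int=\id_{HA}$), the second by item (f) together with $\partial_A^i\circ\int^i=\id_{HA}$; since a Hurwitz series is determined by its first $i$ coordinates together with its image under $\partial_A^i$, the two series coincide. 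There is no substantive obstacle here, the proposition being purely formal; the only point requiring a moment of care is precisely this bookkeeping in (g), which is why I would favour the second argument over a direct index chase.
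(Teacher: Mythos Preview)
Your proposal is correct and follows exactly the line the paper indicates: the paper's proof simply states that items (a)--(e) are trivial because $A$ is a commutative ring with identity, and that items (f) and (g) follow from the definitions of $\partial$ and $\int$, which is precisely what you spell out coordinatewise. You supply more detail than the paper does (in particular the two variants for (g)), but there is no genuine difference in approach.
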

\begin{proof}The proof of Items (\mref{it:com}) -- (\mref{it:iden}) is trivial since $A$ is commutative ring with identity $1$.
Items (\mref{it:parcom}\,) and (\mref{it:intcom}) follow from the definitions of $\partial$ and $\int$.
\end{proof}
\begin{prop}
Let $n\in\NN^+$. Let $z_i,u_i\in HA$ for $0\leq i\leq n-1$.  Let $z=\intl(z_0,z_1,\cdots,z_{n-1})$ and let $u=\intl(u_0,u_1,\cdots,u_{n-1})$. Then
\begin{equation}
\intl(z_0\bx u_0,z_1\bx u_1,\cdots,z_{n-1}\bx u_{n-1})=\intl(z_0,z_1,\cdots,z_{n-1})\bx \intl(u_0,u_1,\cdots,u_{n-1}).
\mlabel{eq:inthad}
\end{equation}
\end{prop}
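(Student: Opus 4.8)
The cleanest route is to evaluate both sides at an arbitrary index $m\in\NN$ and use the pointwise characterization of interlacing from Proposition~\mref{prop:intl2}, namely that a Hurwitz series $w$ equals $\intl(w_0,\dots,w_{n-1})$ if and only if $w(m)=w_{\overline m}(\widehat m)$ for every $m\geq 0$. Since two Hurwitz series are equal exactly when all their coefficients agree, it suffices to check this coefficientwise.

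First I would compute the left-hand side. Writing $w_i:=z_i\bx u_i$ for $0\leq i\leq n-1$, Proposition~\mref{prop:intl2} gives
$$\intl(z_0\bx u_0,\dots,z_{n-1}\bx u_{n-1})(m)=(z_{\overline m}\bx u_{\overline m})(\widehat m)=z_{\overline m}(\widehat m)\,u_{\overline m}(\widehat m),$$
where the last equality is just Definition~\mref{defn:had} of the Hadamard product evaluated at $k=\widehat m$. Next I would compute the right-hand side: by Definition~\mref{defn:had} again,
$$\big(\intl(z_0,\dots,z_{n-1})\bx\intl(u_0,\dots,u_{n-1})\big)(m)=z(m)\,u(m),$$
and then applying Proposition~\mref{prop:intl2} to each factor gives $z(m)=z_{\overline m}(\widehat m)$ and $u(m)=u_{\overline m}(\widehat m)$, so the right-hand side at index $m$ is also $z_{\overline m}(\widehat m)\,u_{\overline m}(\widehat m)$.

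Since the two expressions agree for every $m\geq 0$, Equation~(\mref{eq:inthad}) follows. I do not anticipate any real obstacle here: the identity is essentially the statement that the index bookkeeping $m=\widehat m\,n+\overline m$ underlying interlacing is compatible with the termwise multiplication defining the Hadamard product, and Proposition~\mref{prop:intl2} packages exactly that bookkeeping. (An alternative proof, if one prefers, is to expand all three interlacings via Corollary~\mref{coro:intcom} as $\sum_m z_{\overline m}(\widehat m)\lge m\rge$ etc.\ and note that Hadamard multiplication of basis elements satisfies $\lge m\rge\bx\lge m'\rge=\delta_{m,m'}\lge m\rge$; but the coefficientwise verification above is shorter and avoids manipulating infinite sums.)
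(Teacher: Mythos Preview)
Your proof is correct and follows essentially the same approach as the paper: both verify Eq.~(\mref{eq:inthad}) coefficientwise by combining the pointwise description of interlacing (Proposition~\mref{prop:intl2}) with the definition of the Hadamard product. The paper's proof is simply a one-line compression of your argument, recording only the key identity $(z_{\overline k}\bx u_{\overline k})(\widehat k)=z_{\overline k}(\widehat k)\,u_{\overline k}(\widehat k)$.
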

\begin{proof}
By Eq.~(\mref{eq:hada}),
$(z_{\overline{k}}\bx u_{\overline{k}})(\widehat{k})=z_{\overline{k}}(\widehat{k})u_{\overline{k}}(\widehat{k})$ for all $ k\in\NN$. Thus Eq.~(\mref{eq:inthad}) holds.
\end{proof}

\vspace{0.2cm}
\section{Multiplication of interlacing of Hurwitz series by basis elements}
\mlabel{sec:formula}

Let $z_0,z_1,\cdots,z_{n-1}\in HA$. In order to give the multiplication formula for basis elements $\lge m\rge $ and $\intl(z_0,z_1,\cdots,z_{n-1})$, we shall adopt the following conventions. If $k<n$, then ${k\choose n}=0$, and if $j<0$, $z_{i}(j)=0$ for  $0\leq i\leq n-1$.

\begin{lemma}\mlabel{lem:mfn}
For any $m,k\in\NN$ and all $f\in HA$, we have
\begin{equation}
(\lge m\rge f)(k)={k\choose m} f(k-m).
\end{equation}
\end{lemma}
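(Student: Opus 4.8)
The plan is to unwind the definition of the Hurwitz product directly. By Eq.~(\mref{eq:Hproduct}), for any $k\in\NN$ we have
$$(\lge m\rge f)(k)=\sum_{i=0}^{k}{k\choose i}\,\lge m\rge(i)\,f(k-i).$$
Since $\lge m\rge=(\delta_{i,m})_{i\geq 0}$, the factor $\lge m\rge(i)$ vanishes unless $i=m$, so the whole sum collapses to the single term indexed by $i=m$ — provided that $m$ actually lies in the summation range $\{0,1,\cdots,k\}$.

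Then I would split into two cases. If $m\leq k$, the term $i=m$ survives and yields exactly ${k\choose m}f(k-m)$, which is the claimed right-hand side. If $m>k$, there is no index $i\in\{0,\cdots,k\}$ with $i=m$, hence the sum is empty and $(\lge m\rge f)(k)=0$; on the other hand ${k\choose m}=0$ under the convention fixed immediately before the lemma (and $f(k-m)$ carries a negative argument), so the right-hand side is also $0$. In either case the asserted identity holds.

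The argument presents no genuine obstacle; the only point requiring any care is the bookkeeping at the boundary $m>k$, where one must invoke the stated conventions (${k\choose m}=0$ for $m>k$, and vanishing at negative arguments) to see that both sides agree. This is also precisely why it is natural to record the formula with those conventions in force: once they are adopted, the single expression ${k\choose m}f(k-m)$ covers all $m,k\in\NN$ uniformly, which streamlines the linear-combination computations that follow.
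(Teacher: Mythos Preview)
Your proof is correct and is essentially identical to the paper's: both expand the Hurwitz product via Eq.~(\mref{eq:Hproduct}), observe that $\lge m\rge(i)=\delta_{i,m}$, and split into the cases $k\geq m$ and $k<m$. Your explicit invocation of the conventions $\binom{k}{m}=0$ and $f(k-m)=0$ for $k<m$ to match the right-hand side is a welcome bit of care that the paper leaves implicit.
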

\begin{proof}
By Eq.~(\mref{eq:Hproduct}), we obtain
$$(\lge m\rge f)(k)=\sum_{i=0}^k{k\choose i} \lge m\rge (i)f(k-i).$$
There are now two cases. If $k<m$, then $\lge m\rge (i)=0$ for $0\leq i\leq k$, and so $(\lge m\rge f)(k)=0$. If $k\geq m$, then we obtain $$\lge m\rge(i)=\left\{\begin{array}{ccc}1,&i=m,\\
0,&\text{otherwise}.\end{array}\right.$$ 
Thus,
$$\hspace{5.4cm}(\lge m\rge f)(k)={k\choose m} f(k-m).\hspace{5.4cm}\mbox{\qedhere}$$
\end{proof}
By Lemma~\mref{lem:mfn}, we obtain
$$(\lge m\rge \lge n\rge )(k)={k\choose m}\lge n\rge(k-m)=\left\{\begin{array}{ccc}0,&k\neq n+m;\\
{m+n\choose m},&k=n+m,\end{array}\right.\quad \text{for all}\,\, k\in\NN.$$
Thus, $\lge m\rge \lge n\rge ={m+n\choose m}\lge m+n\rge$.  This also gives Item~(\mref{it:mnpro}) in Proposition~\mref{prop:basismn}.

\begin{theorem}\mlabel{thm:basisz}
Let $n\in\NN^+$ be fixed and let $m\in\NN$. Let $z_0,z_1,\cdots,z_{n-1}\in HA$ and let $z=\intl(z_0,z_1,\cdots,z_{n-1})$. Then
\begin{equation}
(\langle m\rangle \cdot z)(k)={k\choose m} z_{\overline{k-m}}(\widehat{k-m})\quad\text{for all $k\in\NN$}.
\end{equation}
\end{theorem}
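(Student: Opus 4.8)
The plan is to combine Lemma~\mref{lem:mfn} with Proposition~\mref{prop:intl2}. By Lemma~\mref{lem:mfn}, for every $k\in\NN$ we have
$$(\lge m\rge\cdot z)(k)={k\choose m}\,z(k-m),$$
where the convention ${k\choose m}=0$ when $k<m$ handles the case $k<m$ automatically (and makes the value of $z(k-m)$ irrelevant there, consistent with the convention $z_i(j)=0$ for $j<0$). So the only thing left is to identify $z(k-m)$ when $k\geq m$.

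Next I would invoke Proposition~\mref{prop:intl2}: since $z=\intl(z_0,z_1,\cdots,z_{n-1})$, we have $z(\ell)=z_{\overline{\ell}}(\widehat{\ell})$ for all $\ell\geq 0$. Applying this with $\ell=k-m\geq 0$ gives $z(k-m)=z_{\overline{k-m}}(\widehat{k-m})$, and substituting into the displayed identity above yields
$$(\lge m\rge\cdot z)(k)={k\choose m}\,z_{\overline{k-m}}(\widehat{k-m}),$$
which is exactly the claimed formula. For $k<m$ both sides are $0$ by the stated conventions, so the formula holds for all $k\in\NN$.

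There is essentially no obstacle here: the statement is a direct splicing of two already-established facts, Lemma~\mref{lem:mfn} (which computes $(\lge m\rge f)(k)$ for an arbitrary Hurwitz series $f$) and Proposition~\mref{prop:intl2} (which reads off the $\ell$-th coordinate of an interlacing). The only point worth a sentence of care is making the two sign/range conventions match up at the boundary $k<m$, so that the single closed-form expression is valid uniformly; I would state that explicitly rather than split into cases. This theorem is really the $f=z$ specialization of Lemma~\mref{lem:mfn} together with the coordinate description of interlacing, and it sets up the linearity argument that will later extend the multiplication formula from $\lge m\rge$ to an arbitrary $f\in HA$.
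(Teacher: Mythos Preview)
Your proof is correct and matches the paper's approach essentially line for line: both apply Lemma~\mref{lem:mfn} to obtain $(\lge m\rge z)(k)={k\choose m}z(k-m)$ and then invoke Proposition~\mref{prop:intl2} to rewrite $z(k-m)$ as $z_{\overline{k-m}}(\widehat{k-m})$. Your treatment of the boundary case $k<m$ via the stated conventions is a bit more explicit than the paper's, but the argument is the same.
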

\begin{proof}
By Lemma~\mref{lem:mfn}, for any $k\in\NN$,  we have
\begin{align*}
(\lge m\rge\, z)(k)&=\sum_{i=0}^k{k\choose i} \lge m\rge (i) z(k-i)\\
&={k\choose m}z(k-m)\\
\hspace{6cm}&={k\choose m}z_{\overline{k-m}}(\widehat{k-m}).\hspace{6cm}\mbox{\qedhere}
\end{align*}
\end{proof}
\begin{remark}
Let $n\in\NN^+$ be fixed. Let $z_0,z_1,\cdots,z_{n-1}\in HA$ and let $z=\intl(z_0,z_1,\cdots,z_{n-1})$. For all $f\in HA$ and all $k\in\NN$,
\begin{equation}
(f\, z)(k)=\sum_{i=0}^k{k\choose i}f(i)z_{\overline{k-i}}(\widehat{k-i}).
\mlabel{eq:fintl1}
\end{equation}
\end{remark}

By Definition~\mref{defn:had} and Eq.~(\mref{eq:tauf}), for every $i\in\NN_n$, we get
\begin{equation}
((i+1)_{n}\bx z_i)(k)=(kn+i+1)z_i(k)\quad \text{for all}\,\, k\in\NN.
\mlabel{eq:kni1}
\end{equation}
Thus,
\begin{equation}
(i+1)_{n}\bx z_i=\bigg((i+1)z_i(0),(n+i+1)z_i(1),\cdots,(kn+i+1)z_i(k),\cdots\bigg).
\mlabel{eq:notation1}
\end{equation}
For instance, take $i=0$ in the above equation. Then
$$1_n\bx z_0:=\bigg(z_0(0),(n+1)z_0(1),\cdots,(kn+1)z_0(k),\cdots\bigg).$$
Then $\lge 1\rge\,\intl(z_0,z_1,\cdots,z_{n-1})$ can be rewritten as the following formula.
\begin{lemma}\mlabel{lem:1intl}
Let $n\in\NN^+$ be fixed. Let $z=\intl(z_0,z_1,\cdots,z_{n-1})$. Then
\begin{equation}
\lge 1\rge\,z=\intl\bigg(\int( n_n\bx z_{n-1}),1_n\bx z_0,\cdots,(i+1)_n\bx z_i,\cdots,(n-1)_n\bx z_{n-2}\bigg).
\end{equation}
\end{lemma}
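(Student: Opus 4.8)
The plan is to compute both sides of the claimed identity coordinatewise, i.e.\ evaluate them at an arbitrary index $k\in\NN$, using the explicit formulas already available. For the left-hand side, by Lemma~\mref{lem:mfn} (with $m=1$) we have $(\lge 1\rge\, z)(k)={k\choose 1}z(k-1)=k\,z(k-1)$, and by Proposition~\mref{prop:intl2} this equals $k\,z_{\overline{k-1}}(\widehat{k-1})$, with the convention that this is $0$ when $k=0$. So the whole statement reduces to checking that the interlacing on the right-hand side also has $k$th entry equal to $k\,z_{\overline{k-1}}(\widehat{k-1})$ for every $k\ge 0$.

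Next I would unwind the right-hand side via Proposition~\mref{prop:intl2} again: writing $w:=\intl\bigl(\int(n_n\bx z_{n-1}),\,1_n\bx z_0,\,\dots,\,(i+1)_n\bx z_i,\,\dots,\,(n-1)_n\bx z_{n-2}\bigr)$, we have $w(k)=w_{\overline{k}}(\widehat{k})$ where $w_0=\int(n_n\bx z_{n-1})$ and $w_j=j_n\bx z_{j-1}$ for $1\le j\le n-1$. So the computation splits into two cases according to whether $\overline{k}=0$ or $\overline{k}\ge 1$. In the case $\overline{k}\ge 1$, say $\overline{k}=j$ with $1\le j\le n-1$, we get $w(k)=(j_n\bx z_{j-1})(\widehat{k})=(\widehat{k}\,n+j)\,z_{j-1}(\widehat{k})$ by Eq.~(\mref{eq:tauf}); on the other hand, since $1\le j\le n-1$, the index $k-1$ has $\overline{k-1}=j-1$ and $\widehat{k-1}=\widehat{k}$, and $k=\widehat{k}\,n+j$, so $k\,z_{\overline{k-1}}(\widehat{k-1})=(\widehat{k}\,n+j)\,z_{j-1}(\widehat{k})$, matching. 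In the case $\overline{k}=0$, i.e.\ $k=\widehat{k}\,n$: if $k=0$ both sides are $0$ (the left side because $(\lge 1\rge z)(0)=0$, the right side because $w_0=\int(n_n\bx z_{n-1})$ has $0$th entry $0$); if $k\ge n$, then $w(k)=w_0(\widehat{k})=\bigl(\int(n_n\bx z_{n-1})\bigr)(\widehat{k})=(n_n\bx z_{n-1})(\widehat{k}-1)=\bigl((\widehat{k}-1)n+n\bigr)z_{n-1}(\widehat{k}-1)=\widehat{k}\,n\,z_{n-1}(\widehat{k}-1)$, while $k-1=(\widehat{k}-1)n+(n-1)$ gives $\overline{k-1}=n-1$ and $\widehat{k-1}=\widehat{k}-1$, so $k\,z_{\overline{k-1}}(\widehat{k-1})=\widehat{k}\,n\,z_{n-1}(\widehat{k}-1)$, again matching.

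Since the two Hurwitz series agree in every coordinate $k\in\NN$, they are equal, which proves the lemma.

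I do not expect a genuine obstacle here: the result is essentially a bookkeeping check, and the only delicate point is getting the index arithmetic right at the ``wrap-around'' position $\overline{k}=0$, where the shift by $\int$ interacts with the cyclic relabeling of the components $z_0,\dots,z_{n-1}$. A cleaner alternative, which I would mention as a remark, is to derive the lemma directly from Lemma~\mref{lem:intlin}: by Corollary~\mref{coro:kinton}(\mref{it:kplus1}) (or just the relation $\lge 1\rge=\int\lge 0\rge$) one has $\lge 1\rge z=\int z$, and then $\int\intl(z_0,\dots,z_{n-1})=\intl\bigl(\int z_{n-1},z_0,\dots,z_{n-2}\bigr)$ by Lemma~\mref{lem:intlin}(\mref{it:intlac}); it then only remains to check the coordinatewise identities $\int z_{n-1}=\int(n_n\bx z_{n-1})$ applied at shifted arguments and $z_i=\,$``$(i+1)_n\bx z_i$ read off at the right spot'', i.e.\ that passing from the component description back to the full series reintroduces exactly the binomial-type weights $kn+i+1$ coming from Eq.~(\mref{eq:Hproduct}) — but that is again the same index computation as above, so either route works.
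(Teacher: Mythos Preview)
Your main argument is correct and is essentially the same coordinatewise verification the paper gives: both you and the paper reduce to showing $k\,z_{\overline{k-1}}(\widehat{k-1})=w_{\overline{k}}(\widehat{k})$ and then split into the cases $\overline{k}=0$ and $\overline{k}\neq 0$, with the same index arithmetic in each case.

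One correction to your closing remark: the ``cleaner alternative'' you sketch does not work as stated, because $\lge 1\rge\, z\neq \int z$ in general. Indeed $(\lge 1\rge\, z)(k)=k\,z(k-1)$ while $(\int z)(k)=z(k-1)$; the identity $\lge 1\rge=\int\lge 0\rge$ does not let you pull $\int$ past the product, since $\int$ is not multiplicative. So there is no shortcut via Lemma~\mref{lem:intlin}(\mref{it:intlac}) alone; the Hadamard weights $(i+1)_n$ in the statement are precisely what record the extra factor $k$, and the coordinatewise check you already carried out is the right way to see this.
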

\begin{proof}Let $u_0=\int (n_n\bx z_{n-1})$ and $u_{i+1}=(i+1)_n\bx z_i$ for $0\leq i\leq n-2$. Then we shall prove that for all $k\in\NN$,
$$(\lge 1\rge\,z)(k)=\intl\bigg(u_0,u_1,\cdots,u_i,\cdots,u_{n-1}\bigg)(k).$$
By Theorem~\mref{thm:basisz}, we have
$$\lge 1\rge\,\intl(z_0,z_1,\cdots,z_{n-1})(k)= {k\choose 1}z_{\overline{k-1}}(\widehat{k-1})\quad\text{and}\quad
\intl\bigg(u_0,u_1,\cdots,u_{n-1}\bigg)(k)=u_{\overline{k}}(\widehat{k}).$$
Thus we just verify
\begin{equation}
{k\choose 1}z_{\overline{k-1}}(\widehat{k-1})=u_{\overline{k}}(\widehat{k})\quad\text{for all}\,\,k\in\NN.
\mlabel{eq:zku}
\end{equation}
We distinguish two cases.
\smallskip

\noindent
{\bf Case 1. $\overline{k}=0$}. Let $k=nq$ with $q\in\NN$. If $q=0$, then $k=0$, and so ${k\choose 1}z_{\overline{k-1}}(\widehat{k-1})=0=u_0(0).$ If $q\geq 1$, then Eq.~(\mref{eq:zku}) becomes
$${k\choose 1}z_{n-1}(q-1)=u_{0}(q).$$
By the equation $u_0=\int (n_n\bx z_{n-1})$ and Eq.~(\mref{eq:kni1}), we get
\begin{eqnarray*}
u_{0}(q)&=&(n_n\bx z_{n-1})(q-1)\\
&=&((q-1)n+n)z_{n-1}(q-1)\\
&=&(qn)z_{n-1}(q-1)\\
&=&{k\choose 1}z_{n-1}(q-1).
\end{eqnarray*}
Thus, Eq.~(\mref{eq:zku}) holds for $\overline{k}=0$.

\smallskip

\noindent
{\bf Case 2. $\overline{k}\neq0$}. Then $n\geq 2$ and $\overline{k}=1,\cdots,n-1$, and so $\overline{k-1}=\overline{k}-1$. Moreover, $\widehat{k-1}=\widehat{k}$.
By the equations $u_{i+1}=(i+1)_n\bx z_i$ for $0\leq i\leq n-2$, we have
\begin{eqnarray*}
u_{\overline{k}}(\hat{k})&=&(\overline{k}_n\bx z_{\overline{k}-1})(\hat{k})\quad(\text{by Eq.~(\mref{eq:kni1})})\\
&=&(\hat{k}n+\overline{k})z_{\overline{k}-1}(\hat{k})\\
&=&k z_{\overline{k}-1}(\hat{k})\quad(\text{by Eq.~(\mref{eq:mdeco})})\\
&=&k z_{\overline{k-1}}(\widehat{k-1})\\
&=&{k\choose 1}z_{\overline{k-1}}(\widehat{k-1}).
\end{eqnarray*}
This means that Eq.~(\mref{eq:zku}) holds for all $\overline{k}=1,\cdots,n-1$.
\end{proof}
By Lemma~\mref{lem:intlin}~(\mref{it:intlac}), we obtain
\begin{coro}\mlabel{coro:oneintlac}
Let $n\in\NN^+$ be fixed. Let $z=\intl(z_0,z_1,\cdots,z_{n-1})$. Then
\begin{equation}
\lge 1\rge\,z=\int \intl\bigg(1_n\bx z_0,\cdots,(i+1)_n\bx z_i,\cdots,n_n\bx z_{n-1}\bigg).
\end{equation}
\end{coro}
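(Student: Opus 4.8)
The plan is to derive the corollary directly from Lemma~\mref{lem:1intl} by a single application of the integration rule for interlacings recorded in Lemma~\mref{lem:intlin}~(\mref{it:intlac}). First I would introduce the abbreviations $u_i := (i+1)_n\bx z_i$ for $0\leq i\leq n-1$, so that $u_0 = 1_n\bx z_0$, the middle entries are $u_i=(i+1)_n\bx z_i$, and $u_{n-1}=n_n\bx z_{n-1}$. With this notation the right-hand side of the asserted identity is precisely $\int\intl(u_0,u_1,\ldots,u_{n-1})$.

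Next I would apply Lemma~\mref{lem:intlin}~(\mref{it:intlac}), which gives $\int\intl(u_0,u_1,\ldots,u_{n-1})=\intl(\int u_{n-1},u_0,u_1,\ldots,u_{n-2})$. Unwinding the abbreviations, the right-hand side becomes $\intl\big(\int(n_n\bx z_{n-1}),\,1_n\bx z_0,\,\ldots,\,(i+1)_n\bx z_i,\,\ldots,\,(n-1)_n\bx z_{n-2}\big)$, and by Lemma~\mref{lem:1intl} this is exactly $\lge 1\rge\, z$. Chaining the two equalities yields $\lge 1\rge\, z=\int\intl(1_n\bx z_0,\ldots,(i+1)_n\bx z_i,\ldots,n_n\bx z_{n-1})$, which is the assertion.

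I do not expect any genuine difficulty here: the whole argument is a substitution followed by one invocation of an already-proved lemma. The only point that demands a moment of care is the cyclic shift of arguments produced by Lemma~\mref{lem:intlin}~(\mref{it:intlac}) — one must check that it is the last slot $n_n\bx z_{n-1}$ that gets integrated and moved to the front, while the entries $1_n\bx z_0,\ldots,(n-1)_n\bx z_{n-2}$ keep their relative order, so that the output matches on the nose the list appearing in Lemma~\mref{lem:1intl}. If one wished to bypass Lemma~\mref{lem:1intl}, an alternative would be to verify the identity coordinatewise, using Theorem~\mref{thm:basisz} (with $m=1$) on the left and Proposition~\mref{prop:intl2} together with Lemma~\mref{lem:intlin}~(\mref{it:intlac}) on the right; but this merely re-proves Lemma~\mref{lem:1intl}, so the route above is the shortest.
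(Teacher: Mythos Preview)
Your proposal is correct and follows exactly the paper's approach: the paper derives the corollary directly from Lemma~\mref{lem:1intl} by a single application of Lemma~\mref{lem:intlin}~(\mref{it:intlac}), and you have simply spelled out that one-line deduction in detail.
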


\begin{lemma}\mlabel{lem:gkintm}
Let $h\in HA$. Then
\begin{equation}
\lge k\rge\int^m h=\int^{k+m}\bigg({k+m\choose k}h(0),\cdots,{k+m+i\choose k}h(i),\cdots\bigg)\quad\text{for all $k,m\in\NN$}.
\mlabel{eq:gkintm}
\end{equation}
\end{lemma}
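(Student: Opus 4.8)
The plan is to prove \eqref{eq:gkintm} by comparing the two sides coefficientwise as Hurwitz series; the only tools needed are Lemma~\mref{lem:mfn} and the explicit action of the iterated integral. First I would record the elementary fact that, directly from the definition of $\int$, one has $(\int^m g)(j)=g(j-m)$ for $j\geq m$ and $(\int^m g)(j)=0$ for $j<m$, valid for every $g\in HA$. Combined with Lemma~\mref{lem:mfn}, which gives $(\lge k\rge f)(j)=\binom{j}{k}f(j-k)$ for all $f\in HA$ and $j\in\NN$ (and in which $\binom{j}{k}=0$ when $j<k$), this reduces the whole statement to an easy scalar identity at each index $j$.

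For the left-hand side, Lemma~\mref{lem:mfn} applied to $f=\int^m h$ yields $(\lge k\rge\int^m h)(j)=\binom{j}{k}(\int^m h)(j-k)$, and I would then split into three ranges of $j$: if $j<k$ then $\binom{j}{k}=0$; if $k\leq j<k+m$ then $0\leq j-k<m$, so $(\int^m h)(j-k)=0$; and if $j\geq k+m$ then $(\int^m h)(j-k)=h(j-k-m)$. Hence $(\lge k\rge\int^m h)(j)=\binom{j}{k}h(j-k-m)$ when $j\geq k+m$ and $0$ otherwise.

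For the right-hand side, write $w\in HA$ for the series with $w(i)=\binom{k+m+i}{k}h(i)$, so that the claimed identity reads $\lge k\rge\int^m h=\int^{k+m}w$. By the same description of $\int^{k+m}$, one has $(\int^{k+m}w)(j)=0$ for $j<k+m$, while for $j\geq k+m$, $(\int^{k+m}w)(j)=w(j-k-m)=\binom{k+m+(j-k-m)}{k}h(j-k-m)=\binom{j}{k}h(j-k-m)$. This agrees termwise with the left-hand side computed above, so the two Hurwitz series coincide. I do not expect any genuine obstacle: the argument is a short index computation, the only points requiring a moment's attention being the boundary range $j<k+m$ and the trivial rewriting $\binom{k+m+(j-k-m)}{k}=\binom{j}{k}$. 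If a more structural argument is preferred, one can instead expand $h=\sum_{i\geq 0}h(i)\lge i\rge$ by \eqref{eq:lincom}, apply $\int^m$ termwise using Proposition~\mref{prop:basismn}\,(\mref{it:intba}) to get $\int^m h=\sum_{i\geq 0}h(i)\lge i+m\rge$, multiply by $\lge k\rge$, and use Proposition~\mref{prop:basismn}\,(\mref{it:proba}) to obtain $\lge k\rge\int^m h=\sum_{i\geq 0}\binom{k+i+m}{k}h(i)\lge k+i+m\rge$; the right-hand side of \eqref{eq:gkintm} expands to exactly the same series by Proposition~\mref{prop:basismn}\,(\mref{it:intba}).
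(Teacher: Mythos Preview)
Your proof is correct and follows essentially the same route as the paper: both arguments compute the $p$th coefficient of $\lge k\rge\int^m h$ via Lemma~\mref{lem:mfn} and the shift action of $\int^m$, obtain $\binom{p}{k}h(p-k-m)$ for $p\geq k+m$ and $0$ otherwise, and then identify this as the $(k+m)$-fold integral of the series $\big(\binom{k+m+i}{k}h(i)\big)_{i\geq 0}$. Your case split into three ranges and the explicit verification of the right-hand side are slightly more detailed than the paper's two-range treatment, and the alternative basis-expansion argument you sketch is a nice bonus, but there is no substantive difference in method.
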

\begin{proof}
By Lemma~\mref{lem:mfn}, for all $p\in\NN$,
\begin{eqnarray*}
(\lge k\rge \int^m h)(p)&=&{p\choose k} (\int^m h)(p-k)\\
&=&{p\choose k}h(p-k-m).\\
\end{eqnarray*}
Then $(\lge k\rge \int^m h)(p)=0$ for $0\leq p\leq k+m-1$, and for all $p\geq k+m$, we have
$$(\lge k\rge \int^m h)(p)={k+m+i\choose k}h(i),$$
where $i:=p-k-m\geq 0$.
Thus,
$$\hspace{2.6cm}\lge k\rge \int^m h=\int^{k+m}\bigg({k+m\choose k}h(0),\cdots,{k+m+i\choose k}h(i),\cdots\bigg).\hspace{2.6cm}\mbox{\qedhere}$$
\end{proof}
Let $n\in\NN^+$ be fixed and let $k,m\in\NN$. For all $\ell\in\NN$, we define a Hurwitz series $C^\ell_{k,n}\in HA$ by defining
\begin{equation}
C^\ell_{k,n}(p)={\ell+pn\choose k}\quad \text{for all}\,\,p\in\NN.
\mlabel{eq:ckn}
\end{equation}
This means that $C^\ell_{k,n}=\bigg({\ell\choose k},{\ell+n\choose k},{\ell+2n\choose k},\cdots\bigg)$.
Let $z_0,z_1,\cdots,z_{n-1}\in HA$.  By Eq.~(\mref{eq:ckn}), for all $0\leq i\leq n-1$ and all $p\geq 0$, we obtain
\begin{equation}
\bigg(C^{i+k+m}_{k,n}\bx z_i\bigg)(p)={i+k+m+pn\choose k}z_i(p).
\mlabel{eq:actionp}
\end{equation}
This gives
$$C^{i+k+m}_{k,n}\bx z_i=\bigg({i+k+m\choose k}z_i(0),\,{i+k+m+n\choose k}z_i(1),\,\cdots,\,{i+k+m+pn\choose k}z_i(p),\,\cdots,\bigg).$$
If $k=0$, then $C^{i+k+m}_{k,n}\bx z_i=z_i$.  As another example, if $k=1$ and $m=1$, then by Eq.~(\mref{eq:notation1})
\begin{equation}
C^{i+2}_{1,n}\bx z_i=\bigg({i+2\choose 1}z_i(0),\,{i+2+n\choose 1}z_i(1),\cdots,\,{i+2+pn\choose 1}z_i(p),\cdots\bigg)=(i+2)_n\bx z_i.
\mlabel{eq:n2}
\end{equation}

\begin{lemma}\mlabel{lem:mint}
Let $n\in\NN^+$ be fixed. Let $z=\intl(z_0,z_1,\cdots,z_{n-1})$. Then
\begin{equation}
\lge k\rge \int z
=\int^{k+1}\intl\bigg(C^{k+1}_{k,n}\bx z_0,\cdots,C^{i+k+1}_{k,n}\bx z_i,\cdots,C^{n+k}_{k,n}\bx z_{n-1}\bigg)\quad\text{for all  $k\in \NN$}.
\mlabel{eq:kintl}
\end{equation}
\end{lemma}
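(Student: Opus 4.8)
The plan is to establish Eq.~(\mref{eq:kintl}) by evaluating both sides at an arbitrary index $p\in\NN$ and checking equality there. Write
$$v:=\intl\Big(C^{k+1}_{k,n}\bx z_0,\cdots,C^{i+k+1}_{k,n}\bx z_i,\cdots,C^{n+k}_{k,n}\bx z_{n-1}\Big),$$
so that the right-hand side of Eq.~(\mref{eq:kintl}) is $\int^{k+1}v$. Since $\int^{k+1}$ is just the $(k+1)$-fold forward shift, $(\int^{k+1}v)(p)=v(p-k-1)$ for $p\geq k+1$ and $(\int^{k+1}v)(p)=0$ for $p\leq k$. On the left-hand side, Lemma~\mref{lem:mfn} gives $(\lge k\rge\int z)(p)={p\choose k}(\int z)(p-k)$, so everything reduces to comparing these expressions.

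First I would dispose of the boundary range $p\leq k$: there ${p\choose k}(\int z)(p-k)=0$, because ${p\choose k}=0$ when $p<k$ (by the standing convention) and $(\int z)(0)=0$ when $p=k$; this matches $(\int^{k+1}v)(p)=0$. For $p\geq k+1$, set $q:=p-k-1\geq 0$. Then on the left, using the definition of $\int$ and Proposition~\mref{prop:intl2},
$$(\lge k\rge\int z)(p)={p\choose k}(\int z)(p-k)={p\choose k}z(q)={p\choose k}z_{\overline{q}}(\widehat{q}).$$
On the right, $(\int^{k+1}v)(p)=v(q)=v_{\overline{q}}(\widehat{q})$ where $v_i=C^{i+k+1}_{k,n}\bx z_i$, and Eq.~(\mref{eq:actionp}) (with $m=1$) evaluates this to ${\overline{q}+k+1+\widehat{q}n\choose k}z_{\overline{q}}(\widehat{q})$.

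The one genuine step is matching the two binomial coefficients. By Eq.~(\mref{eq:mdeco}) we have $\overline{q}+\widehat{q}n=q=p-k-1$, hence $\overline{q}+k+1+\widehat{q}n=p$, and the binomial coefficient on the right collapses to ${p\choose k}$; thus $(\lge k\rge\int z)(p)=(\int^{k+1}v)(p)$ for every $p$, which is Eq.~(\mref{eq:kintl}). I do not expect any real obstacle here: the argument is bookkeeping with the $\widehat{\ \cdot\ }/\overline{\ \cdot\ }$ notation and the conventions ${p\choose k}=0$ for $p<k$ and $z_i(j)=0$ for $j<0$, and the only thing to "see" is that the exponent offset built into $C^{i+k+1}_{k,n}$ is precisely what is needed to cancel the index shift produced by $\int^{k+1}$ after the reshuffling of the interlacing; one should simply be careful to treat the small-$p$ cases (and the case $n=1$) explicitly so that no out-of-range coefficient is invoked.
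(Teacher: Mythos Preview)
Your proposal is correct and follows essentially the same route as the paper: both evaluate each side at an arbitrary index $p$, use Lemma~\mref{lem:mfn} to reduce the left side to ${p\choose k}z_{\overline{p-k-1}}(\widehat{p-k-1})$, unfold the interlacing on the right via $v_i=C^{i+k+1}_{k,n}\bx z_i$ and Eq.~(\mref{eq:actionp}), and then invoke Eq.~(\mref{eq:mdeco}) to collapse $\overline{q}+k+1+\widehat{q}n$ to $p$. Your treatment of the boundary range $p\leq k$ is a bit more explicit than the paper's (which relies silently on the conventions ${p\choose k}=0$ for $p<k$ and $z_i(j)=0$ for $j<0$), but the argument is the same.
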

\begin{proof}Let $u_i:=C^{i+k+1}_{k,n}\bx z_i$ for all $0\leq i\leq n-1$. By Lemma~\mref{lem:mfn}, we shall prove that for all $p\in\NN$,
\begin{equation}
{p\choose k}z_{\overline{p-k-1}}(\reallywidehat{p-k-1})=u_{\overline{p-k-1}}(\reallywidehat{p-k-1}).
\mlabel{eq:intlkn}
\end{equation}
Since $u_i={i+k+1\choose k}_n\bx z_i$ for all $0\leq i\leq n-1$, we have
\begin{eqnarray*}
u_{\overline{p-k-1}}(\reallywidehat{p-k-1})&=&\bigg(C^{\overline{p-k-1}+k+1}_{k,n}\bx z_{\overline{p-k-1}}\bigg)(\reallywidehat{p-k-1})\\
&=&{\overline{p-k-1}+k+1+\reallywidehat{(p-k-1)}n\choose k}z_{\overline{p-k-1}}(\reallywidehat{p-k-1})\quad(\text{by Eq.~(\mref{eq:actionp}}))\\
&=&{p\choose k}z_{\overline{p-k-1}}(\reallywidehat{p-k-1}).\quad(\text{by Eq.~(\mref{eq:mdeco}}))\\
\end{eqnarray*}
Thus, Eq.~(\mref{eq:intlkn}) holds for all $p\in\NN$.
\end{proof}
More generally, we have
\begin{theorem}\mlabel{thm:gmint}
Let $n\in\NN^+$ be fixed. Let $z=\intl(z_0,z_1,\cdots,z_{n-1})$. Then
\begin{equation}
\lge k\rge \int^mz
=\int^{k+m}\intl\bigg(C^{k+m}_{k,n}\bx z_0,\cdots,C^{i+k+m}_{k,n}\bx z_i,\cdots,C^{n-1+k+m}_{k,n}\bx z_{n-1}\bigg)\quad\text{for all}\, k,m\in \NN.
\mlabel{eq:gkintl}
\end{equation}
\end{theorem}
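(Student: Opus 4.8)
The plan is to prove Theorem~\mref{thm:gmint} in exact parallel with the proof of Lemma~\mref{lem:mint}, namely by a coefficientwise comparison of the two sides. Write
$$w := \intl\Big(C^{k+m}_{k,n}\bx z_0,\cdots,C^{i+k+m}_{k,n}\bx z_i,\cdots,C^{n-1+k+m}_{k,n}\bx z_{n-1}\Big),$$
so that the right-hand side of Eq.~(\mref{eq:gkintl}) is $\int^{k+m}w$. Since two Hurwitz series coincide exactly when all their coefficients agree, it suffices to show that $(\lge k\rge\int^m z)(p)=(\int^{k+m}w)(p)$ for every $p\in\NN$. Throughout I use the conventions adopted at the start of Section~\mref{sec:formula}: $\binom{p}{k}=0$ when $p<k$, and $z_i(j)=0$ when $j<0$.

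First I would compute the left-hand side. By Lemma~\mref{lem:mfn} and the definition of $\int$ (shift by one), $(\lge k\rge\int^m z)(p)=\binom{p}{k}\,(\int^m z)(p-k)=\binom{p}{k}\,z(p-k-m)$, and both these vanish when $p<k+m$ (for $p<k$ because of the binomial coefficient, for $k\leq p<k+m$ because $\int^m z$ has its first $m$ entries zero). When $p\geq k+m$, Proposition~\mref{prop:intl2} rewrites this as
$$(\lge k\rge\int^m z)(p)=\binom{p}{k}\,z_{\overline{p-k-m}}(\reallywidehat{p-k-m}).$$

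Next I would compute the right-hand side. Stripping off the $k+m$ integrations, $(\int^{k+m}w)(p)=0$ for $p<k+m$ and $(\int^{k+m}w)(p)=w(p-k-m)$ for $p\geq k+m$. Applying Proposition~\mref{prop:intl2} to $w$ gives $w(p-k-m)=\big(C^{\overline{p-k-m}+k+m}_{k,n}\bx z_{\overline{p-k-m}}\big)(\reallywidehat{p-k-m})$. Now Eq.~(\mref{eq:actionp}), with $i=\overline{p-k-m}$ and the argument $\reallywidehat{p-k-m}$, evaluates this Hadamard product to $\binom{\overline{p-k-m}+k+m+\reallywidehat{p-k-m}\,n}{k}\,z_{\overline{p-k-m}}(\reallywidehat{p-k-m})$, and by Eq.~(\mref{eq:mdeco}) the binomial index simplifies, since $\overline{p-k-m}+\reallywidehat{p-k-m}\,n=p-k-m$, to $(p-k-m)+k+m=p$. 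Hence $(\int^{k+m}w)(p)=\binom{p}{k}z_{\overline{p-k-m}}(\reallywidehat{p-k-m})$ for $p\geq k+m$, matching the left-hand side; and both sides vanish for $p<k+m$. This proves Eq.~(\mref{eq:gkintl}).

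I do not expect any genuine obstacle here: the argument is essentially bookkeeping, and the only points needing care are the edge range $p<k+m$ (where the two sides vanish for complementary reasons — a vanishing binomial coefficient on the left versus the leading zeros produced by integration on the right) and the consistent application of the conventions for negative arguments. One could instead try induction on $m$ with Lemma~\mref{lem:mint} as the base step, but that would bring in the cyclic shift of Lemma~\mref{lem:intlin}~(\mref{it:intlac}) and obscure the index arithmetic, so the direct coefficient comparison above is cleaner.
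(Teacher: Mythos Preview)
Your proof is correct and follows essentially the same coefficientwise comparison as the paper's own proof: both reduce the claim to verifying $\binom{p}{k}z_{\overline{p-k-m}}(\reallywidehat{p-k-m})=(C^{\overline{p-k-m}+k+m}_{k,n}\bx z_{\overline{p-k-m}})(\reallywidehat{p-k-m})$ via Lemma~\mref{lem:mfn}, Eq.~(\mref{eq:actionp}) and Eq.~(\mref{eq:mdeco}). Your treatment is in fact slightly more careful about the edge range $p<k+m$, which the paper leaves implicit in the conventions.
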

\begin{proof}Let $u_i:=C^{i+k+m}_{k,n}\bx z_i$ for all $0\leq i\leq n-1$. By Lemma~\mref{lem:mfn}, we shall prove that for all $p\in\NN$,
\begin{equation}
{p\choose k}z_{\overline{p-k-m}}(\reallywidehat{p-k-m})=u_{\overline{p-k-m}}(\reallywidehat{p-k-m}).
\mlabel{eq:gintlkn}
\end{equation}
Since $u_i=C^{i+k+m}_{k,n}\bx z_i$ for all $0\leq i\leq n-1$, we have
\begin{eqnarray*}
u_{\overline{p-k-m}}(\reallywidehat{p-k-m})&=&\bigg(C^{\overline{p-k-m}+k+m}_{k,n}\bx z_{\overline{p-k-m}}\bigg)(\reallywidehat{p-k-m})\\
&=&{\overline{p-k-m}+k+m+\reallywidehat{(p-k-m)}n\choose k}z_{\overline{p-k-m}}(\reallywidehat{p-k-m})\quad(\text{by Eq.~(\mref{eq:actionp}}))\\
&=&{p\choose k}z_{\overline{p-k-m}}(\reallywidehat{p-k-m}).\quad(\text{by Eq.~(\mref{eq:mdeco}}))\\
\end{eqnarray*}
Thus, Eq.~(\mref{eq:gintlkn}) holds for all $p\in\NN$.
\end{proof}

\delete{
Use induction on $k\geq 1$. If $k=1$, then by Lemma~\mref{lem:intlin}(\mref{it:addhom}), Corollary~\mref{coro:oneintlac} and Formula for integration by part,
\begin{eqnarray*}
&&\lge 1\rge \int \intl(z_0,\cdots,z_{n-1})\\
&=&\int \lge 0\rge \int \intl(z_0,\cdots,z_{n-1})\\
&=&\int\bigg(\int\intl(z_0,\cdots,z_{n-1})+\lge 1\rge \intl(z_0,\cdots,z_{n-1})\bigg)\\
&=&\int\bigg(\int\intl(z_0,\cdots,z_{n-1})+\int\intl\bigg((1)_n\bx z_0,\cdots,(i+1)_n\bx z_i,\cdots,(n)_n\bx z_{n-1}\bigg)\bigg)\\
&=&\int\int\bigg(\intl(z_0,\cdots,z_{n-1})+\intl\bigg((1)_n\bx z_0,\cdots,(i+1)_n\bx z_i,\cdots,(n)_n\bx z_{n-1}\bigg)\bigg)\\
&=&\int\int\bigg(\intl\bigg(z_0+(1)_n\bx z_0,\cdots,z_i+(i+1)_n\bx z_i,\cdots,z_{n-1}+(n)_n\bx z_{n-1}\bigg)\bigg)\\
&=&\int\int\bigg(\intl\bigg((2)_n\bx z_0,\cdots,(i+2)\bx z_i,\cdots,(n+1)_n\bx z_{n-1}\bigg)\bigg).
\end{eqnarray*}
Assume that Lemma~\mref{lem:mint} is true for all $k\leq \ell$. Consider $k=\ell+1$ with $\ell\geq 1$.  Then
\begin{eqnarray*}
&&\lge \ell+1\rge \int \intl(z_0,\cdots,z_{n-1})\\
&=&\int \lge \ell\rge\int \intl(z_0,\cdots,z_{n-1})\quad(\text{by Proposition~\mref{prop:basismn}(\mref{it:intba})})\\
&=&\int\bigg(\lge \ell\rge\int \intl(z_0,\cdots,z_{n-1})+\bigg(\int \lge \ell \rge\bigg)\intl(z_0,\cdots,z_{n-1})\bigg) \\
&=&\int\bigg(\int\intl(z_0,\cdots,z_{n-1})+\int\intl\bigg({kn+1\choose 1}\bx z_0,\cdots,{kn+i+1\choose 1}\bx z_i,\cdots,{kn+n\choose 1}\bx z_{n-1}\bigg)\bigg)\\
&=&\int\int\bigg(\intl(z_0,\cdots,z_{n-1})+\intl\bigg({kn+1\choose 1}\bx z_0,\cdots,{kn+i+1\choose 1}\bx z_i,\cdots,{kn+n\choose 1}\bx z_{n-1}\bigg)\bigg)\\
&=&\int\int\bigg(\intl\bigg(z_0+{kn+1\choose 1}\bx z_0,\cdots,z_i+{kn+i+1\choose 1}\bx z_i,\cdots,z_{n-1}+{kn+n\choose 1}\bx z_{n-1}\bigg)\bigg)\\
&=&\int\int\bigg(\intl\bigg({kn+2\choose 1}\bx z_0,\cdots,{kn+i+2\choose 1}\bx z_i,\cdots,{kn+n+1\choose 1}\bx z_{n-1}\bigg)\bigg).
\end{eqnarray*}
}
From Theorem~\mref{thm:gmint}, we obtain
\begin{coro}\mlabel{coro:nintlac}
Let $n\in\NN^+$ be fixed. Let $z=\intl(z_0,z_1,\cdots,z_{n-1})$. Then
\begin{equation}
\lge k\rge\,\intl(z_0,z_1,\cdots,z_{n-1})
=\int^k\intl\bigg(C^k_{k,n}\bx z_0,\cdots,C^{i+k}_{k,n}\bx z_i,\cdots,C^{n-1+k}_{k,n}\bx z_{n-1}\bigg)\,\,\,\text{for all $k\in\NN$}.
\end{equation}
\end{coro}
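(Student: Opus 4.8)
The plan is to obtain this statement as the special case $m=0$ of Theorem~\mref{thm:gmint}. Recall the convention $\int^0=\id$. Setting $m=0$ in Eq.~(\mref{eq:gkintl}), the left-hand side $\lge k\rge\int^{0}z$ collapses to $\lge k\rge\,z=\lge k\rge\,\intl(z_0,z_1,\cdots,z_{n-1})$, the outer operator $\int^{k+m}$ becomes $\int^{k}$, and each superscript $i+k+m$ attached to $C^{\,\bullet}_{k,n}$ becomes $i+k$; the resulting identity is exactly the one claimed in the corollary, so no further argument is strictly needed.

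For the reader's convenience I would also spell out the direct verification, which is just the proof of Theorem~\mref{thm:gmint} with every occurrence of $m$ deleted. First set $u_i:=C^{i+k}_{k,n}\bx z_i$ for $0\leq i\leq n-1$. By Lemma~\mref{lem:mfn} together with Proposition~\mref{prop:intl2}, for every $p\in\NN$ we have $(\lge k\rge\,z)(p)={p\choose k}\,z(p-k)={p\choose k}\,z_{\overline{p-k}}(\widehat{p-k})$, where we use the conventions ${p\choose k}=0$ for $p<k$ and $z_i(j)=0$ for $j<0$. On the other side, iterating the definition of $\int$ gives $\big(\int^{k}\intl(u_0,\cdots,u_{n-1})\big)(p)=\intl(u_0,\cdots,u_{n-1})(p-k)=u_{\overline{p-k}}(\widehat{p-k})$, with the understanding that this vanishes when $p<k$. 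Finally, by Eq.~(\mref{eq:actionp}) and the decomposition $p-k=\widehat{p-k}\,n+\overline{p-k}$ supplied by Eq.~(\mref{eq:mdeco}),
\[
u_{\overline{p-k}}(\widehat{p-k})=\Big(C^{\,\overline{p-k}+k}_{k,n}\bx z_{\overline{p-k}}\Big)(\widehat{p-k})={\overline{p-k}+k+\widehat{p-k}\,n\choose k}\,z_{\overline{p-k}}(\widehat{p-k})={p\choose k}\,z_{\overline{p-k}}(\widehat{p-k}),
\]
so the two Hurwitz series agree coefficientwise.

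I do not expect any genuine obstacle here, since the content is entirely a specialization of Theorem~\mref{thm:gmint}, and the heavy lifting (the inductions behind Lemma~\mref{lem:intlin}(\mref{it:intlac}), Proposition~\mref{prop:intmfold} and Theorem~\mref{thm:gmint}) has already been carried out. The only points that need a little care are the bookkeeping at indices $p<k$, where both sides vanish by the stated conventions, and making sure the identity $\overline{q}+\widehat{q}\,n=q$ from Eq.~(\mref{eq:mdeco}) is applied to $q=p-k$ rather than to $p$. With those understood, the corollary is immediate.
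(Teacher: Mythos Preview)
Your proposal is correct and matches the paper's own approach essentially line for line: the paper also presents the corollary as a consequence of Theorem~\mref{thm:gmint} and then gives exactly the direct coefficientwise verification you wrote out, setting $u_i:=C^{i+k}_{k,n}\bx z_i$ and using Lemma~\mref{lem:mfn}, Eq.~(\mref{eq:actionp}) and Eq.~(\mref{eq:mdeco}) to identify both sides at each index $p$. Your remarks on the $p<k$ case and on applying Eq.~(\mref{eq:mdeco}) to $q=p-k$ are appropriate bookkeeping and do not diverge from the paper.
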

\begin{proof}Let $u_i:=C^{i+k}_{k,n}\bx z_i$ for all $0\leq i\leq n-1$.
By Lemma~\mref{lem:mfn}, we shall prove that for all $p\geq 0$,
\begin{equation}
{p\choose k}z_{\overline{p-k}}(\widehat{p-k})=u_{\overline{p-k}}(\widehat{p-k}).
\end{equation}
Since $u_i=C^{i+k}_{k,n}\bx z_i$ for all $0\leq i\leq n-1$, we have
\begin{eqnarray*}
u_{\overline{p-k}}(\widehat{p-k})&=&\bigg(C^{\overline{p-k}+k}_{k,n}\bx z_{\overline{p-k}}\bigg)(\widehat{p-k})\\
&=&{\overline{p-k}+k+(\widehat{p-k})n\choose k}z_{\overline{p-k}}(\widehat{p-k})\quad(\text{by Eq.~(\mref{eq:actionp}}))\\
\hspace{4.7cm}&=&{p\choose k}z_{\overline{p-k}}(\widehat{p-k})\quad(\text{by Eq.~(\mref{eq:mdeco}})).\hspace{4.7cm}\mbox{\qedhere}
\end{eqnarray*}
\end{proof}

We now state the first main result on the multiplication formula for basis elements $\lge k\rge $ and the interlacing of $z_0,z_1,\cdots,z_{n-1}\in HA$.
\begin{theorem}\mlabel{thm:main1}
Let $n\in\NN^+$ be fixed. Let $z=\intl(z_0,z_1,\cdots,z_{n-1})$. For all $k\in\NN$, we have
\begin{eqnarray}
&&\lge k\rge\,\intl(z_0,z_1,\cdots,z_{n-1})\mlabel{eq:inmb}\\
&=&\intl(\int^{\widehat{k}+1}C^{n-\overline{k}+k}_{k,n}\bx z_{n-\overline{k}},\cdots,\int^{\widehat{k}+1} C^{n-1+k}_{k,n}\bx z_{n-1},\int^{\widehat{k}} C^k_{k,n}\bx z_0,\cdots,\int^{\widehat{k}}C^{n-\overline{k}-1+k}_{k,n}\bx z_{n-\overline{k}-1}).\nonumber
\end{eqnarray}
\end{theorem}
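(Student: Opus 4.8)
The plan is to derive Eq.~(\mref{eq:inmb}) by composing two results already in hand: Corollary~\mref{coro:nintlac}, which rewrites $\lge k\rge\,\intl(z_0,z_1,\cdots,z_{n-1})$ as a $k$-fold integral of a single interlacing, and Proposition~\mref{prop:intmfold}, which converts an iterated integral of an interlacing back into an interlacing. No new idea is needed: the theorem is essentially the formal composite of these two statements, with all of the floor/ceiling arithmetic already absorbed into Proposition~\mref{prop:intmfold}.

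Concretely, I would first introduce the abbreviation $u_i := C^{i+k}_{k,n}\bx z_i$ for $0\leq i\leq n-1$, so that Corollary~\mref{coro:nintlac} reads $\lge k\rge\,\intl(z_0,z_1,\cdots,z_{n-1}) = \int^k\intl(u_0,u_1,\cdots,u_{n-1})$. Next I would apply Proposition~\mref{prop:intmfold} with $m=k$ to the right-hand side, obtaining
$$\int^k\intl(u_0,\cdots,u_{n-1}) = \intl\Big(\int^{\widehat{k}+1}u_{n-\overline{k}},\cdots,\int^{\widehat{k}+1}u_{n-1},\int^{\widehat{k}}u_0,\cdots,\int^{\widehat{k}}u_{n-\overline{k}-1}\Big).$$
Finally, substituting $u_j = C^{j+k}_{k,n}\bx z_j$ back in — so that $u_{n-\overline{k}} = C^{(n-\overline{k})+k}_{k,n}\bx z_{n-\overline{k}}$, continuing up to $u_{n-1} = C^{(n-1)+k}_{k,n}\bx z_{n-1}$, and $u_0 = C^{k}_{k,n}\bx z_0$ up to $u_{n-\overline{k}-1} = C^{(n-\overline{k}-1)+k}_{k,n}\bx z_{n-\overline{k}-1}$ — yields exactly the right-hand side of Eq.~(\mref{eq:inmb}).

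The only point demanding attention is the index bookkeeping in this last substitution: one must verify that the block of entries $u_{n-\overline{k}},\cdots,u_{n-1}$ receiving $\widehat{k}+1$ integrals, and the block $u_0,\cdots,u_{n-\overline{k}-1}$ receiving $\widehat{k}$ integrals, match term-by-term the superscripts of the $C^{\bullet}_{k,n}$ appearing in the target, and that the boundary case $\overline{k}=0$ (where the first block is empty and the formula collapses to $\intl(\int^{\widehat{k}}C^{k}_{k,n}\bx z_0,\cdots,\int^{\widehat{k}}C^{n-1+k}_{k,n}\bx z_{n-1})$) is consistent. There is no genuine obstacle here. Alternatively, one could bypass Corollary~\mref{coro:nintlac} and argue directly at the level of coefficients: by Lemma~\mref{lem:mfn} and Proposition~\mref{prop:intl2} one has $(\lge k\rge\,z)(p) = {p\choose k}z(p-k) = {p\choose k}z_{\overline{p-k}}(\widehat{p-k})$ for all $p\in\NN$, and then one reads off the $p$th coefficient of the right-hand side of Eq.~(\mref{eq:inmb}) via Proposition~\mref{prop:intl2}, Proposition~\mref{prop:basismn}~(\mref{it:intba}) and Eq.~(\mref{eq:actionp}), checking equality; but the two-step composition above is the cleaner route since the heavy arithmetic is already packaged in Proposition~\mref{prop:intmfold}.
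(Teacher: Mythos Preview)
Your proposal is correct and matches the paper's own proof exactly: the paper's argument is the one-line ``This follows from Corollary~\mref{coro:nintlac} and Proposition~\mref{prop:intmfold},'' which is precisely the two-step composition you describe. Your added bookkeeping remarks (the $u_i$ substitution and the $\overline{k}=0$ boundary check) are sound and simply flesh out what the paper leaves implicit.
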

\begin{proof}
This follows from Corollary~\mref{coro:nintlac} and Proposition~\mref{prop:intmfold}.
\end{proof}

\delete{
Consider the special case when $n=3$. By Corollary~\mref{coro:nintlac}, we obtain
\begin{equation}
\lge k\rge\,\intl(z_0,z_1,z_2)
=\int^k\intl\bigg(C^{k}_{k,3}\bx z_0,C^{k+1}_{k,3}\bx z_1,C^{k+2}_{k,3}\bx z_2\bigg)\quad\text{for all $k\in\NN$}.
\end{equation}

\begin{exam}Take $k=3$ and $n=4$ in the left hand side of Eq.~(\mref{eq:inmb}). Then by Corollary~\mref{coro:nintlac},
$$\lge 3\rge\,\intl(z_0,z_1,z_2,z_3)=\int^3\intl(C^3_{3,4}\bx z_0,C^{4}_{3,4}\bx z_1,C^5_{3,4}\bx z_2,C^6_{3,4}\bx z_3),$$
where for all $0\leq i\leq 3$ and $p\geq 0$,
$$C^{i+3}_{3,4}\bx z_i:=\Bigg({i+3\choose 3}z_i(0),{i+7\choose 3}z_i(1),\cdots,{i+3+4p\choose 3}z_i(p),\cdots\Bigg).$$
To prove Eq.~(\mref{eq:inmb}), we shall verify that
\begin{equation}
(\lge 3\rge\,\intl(z_0,z_1,z_2,z_3))(p)=\int^3\intl(C^3_{3,4}\bx z_0,C^4_{3,4}\bx z_1,C^5_{3,4}\bx z_2,C^6_{3,4}\bx z_3)(p)
\mlabel{eq:thrcase}
\end{equation}
holds for all $p\geq 0$.
We next check it by Theorem~\mref{thm:basisz}. For all $p\geq 0$, we have
$$(\lge 3\rge\,\intl(z_0,z_1,z_2,z_3))(p)={p\choose 3} z_{\overline{p-3}}(\widehat{p-3})=\left\{\begin{array}{ccc}
0,& p<3;\\
{p\choose 3} z_{\overline{p-3}}(\widehat{p-3}),&p\geq 3.\end{array}\right.$$
Then Eq.~(\mref{eq:thrcase}) holds if $p<3$. Assume $p\geq 3$.
By Proposition~\mref{prop:intl2}, we obtain
\begin{eqnarray*}
&&\int^3\intl(C^3_{3,4}\bx z_0,C^4_{3,4}\bx z_1,C^5_{3,4}\bx z_2,C^6_{3,4}\bx z_3)(p)\\
&=&\intl(C^3_{3,4}\bx z_0,C^4_{3,4}\bx z_1,C^5_{3,4}\bx z_2,C^6_{3,4}\bx z_3)(p-3)\\
&=&(C^{\overline{p-3}+3}_{3,4}\bx z_{\overline{p-3}})(\widehat{p-3})\\
&=&{\overline{p-3}+3+4\widehat{p-3}\choose 3} z_{\overline{p-3}}(\widehat{p-3})\\
&=&{p\choose 3}z_{\overline{p-3}}(\widehat{p-3}).
\end{eqnarray*}
Thus, Eq.~(\mref{eq:thrcase}) holds for all $p\geq 0$.
\end{exam}

We next give another example.
\begin{exam}
Consider a second-order homogeneous differential equation in $HA$
\vspace{-0.1cm}
\begin{equation}\mlabel{eq:a2}
\partial^2 y+xy=0.
\end{equation}
Then
\vspace{-0.5cm}
\begin{eqnarray*}
&&h\in HA \text{ is a solution of Eq.~(\mref{eq:a2})}\\
&\Leftrightarrow&(\partial^2 h)(n)=-(xh)(n)\\
&\Leftrightarrow& h(n+2)=-{n\choose 1}h(n-1)\quad(\text{by Lemma~\mref{lem:mfn}})\\
&\Leftrightarrow& h(3n)=(-1)^nh(0)\prod_{k=1}^{n}(3k-2),\,h(3n+1)=(-1)^nh(1)\prod_{k=1}^{n}(3k-1)\,\,\text{for all}\,\,n\in\NN^+,\\
&&h(3n+2)=0\,\,\text{for all}\,\,n\in\NN.
\end{eqnarray*}
Then for all $n\in\NN^+$, we define  $z_0,z_1\in HA$ as follows.
$$z_0(0)=h(0),\,z_0(n)=(-1)^nh(0)\prod_{k=1}^{n}(3k-2) \quad\text{and}\quad
z_1(0)=h(1),\,z_1(n)=(-1)^nh(1)\prod_{k=1}^{n}(3k-1).$$
Thus $h=\intl(z_0,z_1,0)$.
\end{exam}
}

We finally generalize Theorem~\mref{thm:main1} to the multiplication formula for  arbitrary Hurwitz series $f\in HA $ and $\intl(z_0,z_1,\cdots,z_{n-1})$ by linearity. What is more, this precisely answers the aforementioned  question.
\begin{theorem}
Let $n\in\NN^+$. Let $f\in HA$. Let $z_0,z_1,\cdots,z_{n-1}\in HA$. For any $k\in\NN$, we have
\begin{equation}
f\,\intl(z_0,z_1,\cdots,z_{n-1})=\intl(h_0,h_1,\cdots,h_{n-1}),
\mlabel{eq:fintl}
\end{equation}
where for $0\leq i\leq \overline{k}-1$,
$$h_i=\sum_{k\in\NN}\int^{\hat{k}+1}f(k)C^{n-\overline{k}+k+i}_{k,n}\bx z_{n-\overline{k}+i}
\quad \text{and for}\,\, \overline{k}\leq i\leq n-1,
\quad h_i=\sum_{k\in\NN}\int^{\widehat{k}}f(k) C^{i-\overline{k}+k}_{k,n}\bx z_{i-\overline{k}}.$$
\end{theorem}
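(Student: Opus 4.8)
The plan is to reduce the general statement to the basis case $\langle k\rangle\,\intl(z_0,\ldots,z_{n-1})$ already settled in Theorem~\ref{thm:main1}, and then to use the $A$-linearity of $\intl$, of Hadamard multiplication and of the iterated integral to reassemble the answer. First I would expand $f$ in the basis $\{\langle k\rangle\mid k\in\NN\}$ via Eq.~(\ref{eq:lincom}), writing $f=\sum_{k\in\NN}f(k)\langle k\rangle$, and then distribute:
\[
f\,\intl(z_0,\ldots,z_{n-1})=\sum_{k\in\NN}f(k)\,\bigl(\langle k\rangle\,\intl(z_0,\ldots,z_{n-1})\bigr).
\]
This identity is to be read coefficientwise, and it is legitimate because, by Lemma~\ref{lem:mfn} (equivalently Theorem~\ref{thm:basisz}), the $p$-th coefficient of $\langle k\rangle\,\intl(z_0,\ldots,z_{n-1})$ is $\binom{p}{k}z_{\overline{p-k}}(\widehat{p-k})$, which vanishes once $k>p$; hence for each fixed $p$ only finitely many summands contribute, and the rearrangement is justified (this is precisely formula~(\ref{eq:fintl1})).

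Next I would substitute the closed form of $\langle k\rangle\,\intl(z_0,\ldots,z_{n-1})$ furnished by Theorem~\ref{thm:main1}, and then pull the scalars $f(k)$ inside: Hadamard multiplication by a fixed $z_i$ and the operator $\int^{m}$ are $A$-linear, and $\intl$ is additive and $A$-linear by Lemma~\ref{lem:intlin}\,(\ref{it:addhom}),(\ref{it:klin}). These facts let me interchange $\sum_{k\in\NN}$ with $\intl(-)$, landing in $\intl(h_0,\ldots,h_{n-1})$, where $h_i$ is the sum over $k$ of $f(k)$ times the $i$-th entry of the interlacing on the right-hand side of Theorem~\ref{thm:main1}. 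Reading off that $i$-th entry for a given $k$: the slots $0,\ldots,\overline{k}-1$ carry $\int^{\widehat{k}+1}C^{n-\overline{k}+k+i}_{k,n}\bx z_{n-\overline{k}+i}$, while the slots $\overline{k},\ldots,n-1$ carry $\int^{\widehat{k}}C^{i-\overline{k}+k}_{k,n}\bx z_{i-\overline{k}}$.

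The linearity steps are routine; the one point requiring care is that the splitting of the $n$ interlacing slots into an ``$\int^{\widehat{k}+1}$-block'' of length $\overline{k}$ and an ``$\int^{\widehat{k}}$-block'' of length $n-\overline{k}$ moves with $k$. So when collecting the coefficient of a fixed slot $i$ one must partition the summation set $\NN$ into $\{k:\overline{k}>i\}$ and $\{k:\overline{k}\le i\}$, and check that on each part the slot formula of Theorem~\ref{thm:main1} specializes to the asserted summand — matching both the superscripts $n-\overline{k}+k+i$ versus $i-\overline{k}+k$ of the $C$-series (here Eq.~(\ref{eq:mdeco}) and Eq.~(\ref{eq:overhat}) are used as in the proof of Theorem~\ref{thm:main1}) and the integration depths $\widehat{k}+1$ versus $\widehat{k}$, and also covering the degenerate cases $i=0$ and $\overline{k}=0$. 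I expect this indexing bookkeeping to be the main, if entirely elementary, obstacle; once it is carried out, the conclusion $f\,\intl(z_0,\ldots,z_{n-1})=\intl(h_0,\ldots,h_{n-1})$ with the stated $h_i$ follows formally.
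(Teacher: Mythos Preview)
Your proposal is correct and follows essentially the same route as the paper: expand $f=\sum_{k}f(k)\langle k\rangle$, apply Theorem~\ref{thm:main1} term by term, and then use the $A$-linearity of $\intl$ (Lemma~\ref{lem:intlin}) to collect the result as $\intl(h_0,\ldots,h_{n-1})$. In fact you are more careful than the paper on two points it leaves implicit: the coefficientwise justification of the infinite sum via Lemma~\ref{lem:mfn}, and the observation that the $\overline{k}$-dependent splitting of slots means each $h_i$ must be assembled by partitioning the summation over $k$ according to whether $\overline{k}>i$ or $\overline{k}\le i$.
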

\begin{proof}By Lemma~\mref{lem:intlin}~(\mref{it:addhom}) and (\mref{it:klin}), we get
\begin{eqnarray*}
&&f\,\intl(z_0,z_1,\cdots,z_{n-1})\\
&=&\bigg(\sum_{k\in\NN}f(k)\lge k\rge\bigg)\intl(z_0,z_1,\cdots,z_{n-1})\\
&=&\sum_{k\in\NN}\bigg(f(k)\lge k\rge \intl(z_0,z_1,\cdots,z_{n-1})\bigg)\\
&=&\sum_{k\in\NN}\bigg(f(k)\bigg(\lge k\rge \intl(z_0,z_1,\cdots,z_{n-1})\bigg)\bigg)\\
&=&\sum_{k\in\NN}f(k)\intl\bigg(\int^{\widehat{k}+1}C^{n-\overline{k}+k}_{k,n}\bx z_{n-\overline{k}},\cdots,\int^{\widehat{k}+1} C^{n-1+k}_{k,n}\bx z_{n-1},\\
&&\int^{\widehat{k}}C^k_{k,n}\bx z_0,\cdots,\int^{\widehat{k}}C^{n-\overline{k}-1+k}_{k,n}\bx z_{n-\overline{k}-1}\bigg)\\
&=&\intl\bigg(\sum_{k\in\NN}f(k)\int^{\widehat{k}+1}C^{n-\overline{k}+k}_{k,n}\bx z_{n-\overline{k}},\cdots,\sum_{k\in\NN}f(k)\int^{\widehat{k}+1} C^{n-1+k}_{k,n}\bx z_{n-1},\\
&&\sum_{k\in\NN}f(k)\int^{\widehat{k}}C^k_{k,n}\bx z_0,\cdots,\sum_{k\in\NN}f(k)\int^{\widehat{k}}C^{n-\overline{k}-1+k}_{k,n}\bx z_{n-\overline{k}-1}\bigg)\\
\hspace{4cm}&=&\intl(h_0,h_1,\cdots,h_{n-1}).\hspace{7cm}\mbox{\qedhere}
\end{eqnarray*}
\end{proof}

\section{Differential equations}
\mlabel{sec:diffeq}

For now let $A$ be a field of any characteristic.  In this section, we focus on the differential equations in the differential ring of Hurwitz series.
We begin by recalling some notations, as in~\cite{GK1}.
Let $h_0,h_1,\cdots,h_{n-1}\in HA$. Define an \emph{$A$-linear homogeneous differential operator} $L$ on $HA$ as follows:
\begin{equation}
L:HA\to HA,\,y\mapsto \partial^n(y)+\sum_{i=0}^{n-1}h_i\partial^{i}(y).
\end{equation}
Let
\begin{equation}
L(y)=\partial^n(y)+\sum_{i=0}^{n-1}h_i\partial^{i}(y)=0.
\mlabel{eq:homgde}
\end{equation}
 Then we call it
an \emph{$n$th-order monic linear homogeneous differential equation} in $HA$.

\subsection{Natural exponential functions in Hurwitz series ring}
\begin{prop}$($~\cite[Proposition~2.1]{KS}$)$ Let $h_0,h_1,\cdots,h_{n-1}\in HA$ and let $L$ be defined as above.  Let
$$V=\{h\in HA\,|\,L(h)=0\}.$$
Then $V$ is an $n$-dimensional vector space over $A$.
\mlabel{prop:vsp}
\end{prop}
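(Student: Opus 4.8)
The plan is to exhibit an explicit $A$-linear isomorphism between $V$ and $A^n$. Consider the evaluation map
$$\pi\colon HA \to A^n,\qquad h \mapsto \bigl(h(0), h(1), \dots, h(n-1)\bigr),$$
which is plainly $A$-linear. First I would check that $L$ is $A$-linear, so that $V = \ker L$ is an $A$-subspace of $HA$ and $\pi$ restricts to an $A$-linear map $\pi|_V\colon V \to A^n$: for $a \in A$, regarded as the constant series $a\lge 0\rge$, Eq.~(\mref{eq:Hproduct}) gives $(ah)(m) = a\,h(m)$ for all $m$, so scalar multiplication by $a$ commutes with $\partial$ and with multiplication by each $h_i$, whence $L(ah) = aL(h)$; additivity is clear. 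It then suffices to prove that $\pi|_V$ is a bijection.

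The key is to rewrite the equation $L(h)=0$ as a recursion on the coefficients of $h$. Using $(\partial^i h)(m) = h(m+i)$ together with Eq.~(\mref{eq:Hproduct}), one computes, for every $m \geq 0$,
$$L(h)(m) = h(m+n) + \sum_{i=0}^{n-1}\sum_{j=0}^{m}\binom{m}{j}\, h_i(j)\, h(m+i-j).$$
Hence $L(h)=0$ if and only if
$$h(m+n) = -\sum_{i=0}^{n-1}\sum_{j=0}^{m}\binom{m}{j}\, h_i(j)\, h(m+i-j)\qquad\text{for all } m \geq 0.$$
The crucial point is that on the right-hand side every index $m+i-j$ lies in $\{0,1,\dots,m+n-1\}$, so this formula determines $h(m+n)$ from the earlier coefficients $h(0),\dots,h(m+n-1)$; moreover the coefficients $\binom{m}{j}$ are ordinary integers and no division is ever performed, so the recursion makes sense over a field of arbitrary characteristic.

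Injectivity of $\pi|_V$ is now immediate: if $h \in V$ with $h(0)=\dots=h(n-1)=0$, then an induction on $m$ using the recursion yields $h(m+n)=0$ for all $m \geq 0$, so $h = 0$. For surjectivity, given $(c_0,\dots,c_{n-1}) \in A^n$ I would define $h \in HA$ by setting $h(i) = c_i$ for $0 \leq i \leq n-1$ and then using the recursion to define $h(m+n)$ for all $m \geq 0$; running the displayed identity backwards shows $L(h)(m) = 0$ for every $m$, so $h \in V$, and evidently $\pi(h) = (c_0,\dots,c_{n-1})$. Thus $\pi|_V$ is a bijective $A$-linear map, i.e.\ an isomorphism $V \cong A^n$, and therefore $\dim_A V = n$.

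Every step here is elementary bookkeeping, so I do not expect a genuine obstacle; the only points that need a moment's care are the verification of the index bound $0 \leq m+i-j \leq m+n-1$ that makes the recursion well-founded, and the observation that the recursion never requires inverting an element of $A$ --- which is exactly why the ring of Hurwitz series, rather than the ordinary power series ring, is the natural setting for this statement. (The same argument shows, more generally, that $V$ is a free $A$-module of rank $n$ over any commutative ring $A$.)
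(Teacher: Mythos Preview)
Your argument is correct and complete: the evaluation map $\pi|_V\colon V\to A^n$ is well defined, $A$-linear, and the recursion you extract from $L(h)(m)=0$ shows it is bijective, with the index check $0\le m+i-j\le m+n-1$ and the absence of denominators handled exactly as you say. Note, however, that the paper does not supply its own proof of this proposition; it is quoted verbatim from \cite[Proposition~2.1]{KS}, so there is nothing in the present paper to compare your argument against. Your proof is the standard one for this result and is precisely the argument one would expect to find in the cited source; your closing remark that the same reasoning gives freeness of rank $n$ over any commutative ring $A$ is a genuine (and easy) strengthening, though the paper only needs the field case.
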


By Proposition~\mref{prop:vsp}, we obtain
\begin{coro}\mlabel{coro:solu}
Let $y_1,\cdots,y_n\in HA$ be linearly independent solutions of $L(y)=0$. Then $y_0$ is a solution of $L(y)=0$ if and only if there exist $c_1,\cdots,c_n\in A$ such that $y_0=\sum_{i=1}^{n}c_iy_i$.
\end{coro}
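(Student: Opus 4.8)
The plan is to deduce this directly from Proposition~\mref{prop:vsp}, which identifies $V=\{h\in HA\mid L(h)=0\}$ as an $n$-dimensional vector space over $A$. First I would dispatch the ``if'' direction, which uses only linearity: since $\partial$ is $A$-linear and multiplication by each fixed $h_i\in HA$ is $A$-linear, the operator $L$ is $A$-linear; hence if $y_0=\sum_{i=1}^{n}c_iy_i$ with $c_1,\dots,c_n\in A$, then $L(y_0)=\sum_{i=1}^{n}c_iL(y_i)=0$, so $y_0$ is a solution of $L(y)=0$.

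For the ``only if'' direction I would argue by pure linear algebra. The hypothesis says $y_1,\dots,y_n$ are $n$ linearly independent vectors of the $A$-vector space $V$, and by Proposition~\mref{prop:vsp} we have $\dim_A V=n$; therefore $\{y_1,\dots,y_n\}$ is a basis of $V$. Since any solution $y_0$ of $L(y)=0$ lies in $V$, it lies in the $A$-span of $y_1,\dots,y_n$, so there exist $c_1,\dots,c_n\in A$ with $y_0=\sum_{i=1}^{n}c_iy_i$ (and these scalars are in fact unique by linear independence of the $y_i$, although only existence is asserted). There is no genuine obstacle here: all of the substance is packaged in Proposition~\mref{prop:vsp}, namely the finite-dimensionality of the solution space, which is where the hypothesis that $A$ is a field is used; granting that, the corollary is just the standard fact that a maximal linearly independent family in a finite-dimensional vector space spans it.
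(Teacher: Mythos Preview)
Your argument is correct and is exactly the approach the paper takes: the corollary is stated immediately after Proposition~\ref{prop:vsp} with the words ``By Proposition~\ref{prop:vsp}, we obtain'' and no further proof, so you have simply spelled out the intended one-line deduction from $\dim_A V=n$.
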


For all $h:=(h(0),h(1),h(2),\cdots)\in HA$, we define the {\bf order} of $h$, denoted by $\ord(h)$, to be the minimum natural number $i\in\NN$ such that $h(i)\neq 0$ and $h(j)=0 $ for all $j=0,\cdots,i-1$. We adopt the convention that $\ord(h)=\infty$ if $h=0$.  For instance, $\ord(\lge n\rge)=n$ for all $n\in\NN$.

Denote
$$H_0A:=\{h\in HA\,|\, \ord(h)\geq1\}.$$
Also, we set $0\in H_0A$.  Note that $\vep: HA\to A,\,h\mapsto h(0)$, is a ring homomorphism. Then $h\in H_0A$ if and only if $\vep(h)=0$. Let
$$H_1A:=\{h\in HA\,|\,\vep(h)=1\}.$$
An element $h\in HA$ is invertible if and only if $\vep(h) (=h(0))$  is invertible in $A$~\cite[Lemma~3.2]{K2}.  Suppose that $h\in HA$ is invertible.
Then the inverse element of $h$ in $HA$, denoted by $h^{-1}$, is given by the following recursive formula:
\begin{equation}
h^{-1}(0)=h(0)^{-1}\quad\text{and}\quad h^{-1}(n)=-h(0)^{-1}\sum_{k=1}^{n}{n\choose k}h(k)h^{-1}(n-k)\quad\text{for all}\,n\in\NN^+.
\mlabel{eq:inverse}
\end{equation}

We also denote by $H_\ast A$ the group of invertible elements in $HA$. Thus, $H_1A$ is a  normal subgroup of $H_\ast A$ and $H_0A\cap H_\ast A=\emptyset$. Let
$$H_\diamond  A:=\{h\in HA\,|\, 0\neq \vep(h)\,\, \text{is not invertible in}\,A\}.$$
Then we obtain a disjoint union
\begin{equation}
HA=H_0A\bigsqcup H_\ast A\bigsqcup H_\diamond A.
\end{equation}
\begin{defn}\mlabel{defn:dividpow}
Let $n\in\NN$. For all $h\in HA$, we define the $n$th {\bf divided power} of $h$, denoted by $h^{[n]}$, by defining inductively
\begin{equation}
h^{[0]}= 1\quad \text{and}\quad h^{[n]}=\int (h^{[ n-1]}\partial(h)),\,\,\text{for all}\,\,n\in\NN^+.
\end{equation}
\end{defn}
From the definition of $n$th divided power of $h$, we obtain
$$\partial (h^{[n]})=h^{[n-1]}\partial(h),$$
analogous to the power rule in calculus.
Let $x:=\lge 1\rge$. Then
$$x^{[0]}=1=\lge 0\rge, \,x^{[1]}=x=\lge 1\rge, \,x^{[2]}=\lge 2\rge.$$
In general,
$$x^{[n]}=\lge n\rge\,\,\text{for all} \,\,n\in\NN.$$
\begin{prop}\mlabel{prop:nfacpro}
Let $f,g\in HA$ and let $h\in H_0A$. For all $n\in\NN$,
\begin{enumerate}
\item
$n!h^{[n]}=h^n$.
\mlabel{it:nfac}
\item
$(f+g)^{[n]}=\sum_{i+j=n}f^{[i]}g^{[j]}$.
\item
$(fh)^{[n]}=f^nh^{[n]}$.
\item
$f^{[m]}f^{[n]}={m+n \choose m}f^{[m+n]}$.
\item If $h\neq 0$, then $\ord(\pal(h))=\ord(h)-1$.
\mlabel{it:ordp}
\item $\ord(h^{[n]})\geq n\,\ord(h)$, and hence, $\ord(h^{[n]})\geq n$.
\mlabel{it:ordpn}
\item
$h^{[n]}(i)=0$ for all $i<n$.
\mlabel{it:ordpns}
\item
$\ord(fg)\geq \ord(f)+\ord(g).$
\mlabel{it:fgc}
\item
If $f\neq 0$, then $\ord(\int f)=\ord(f)+1$.
 \mlabel{it:ordi}
 \item $\ord(f^{[n]})\geq n$.
\mlabel{it:ordpnf}
\end{enumerate}
\end{prop}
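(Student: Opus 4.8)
The plan is to establish the ten items in a dependency-respecting order rather than the printed one: first the elementary order computations (e), (h) and (i), which are immediate from the coordinate formulas for $\partial$, $\int$ and the Hurwitz product in Eq.~(\mref{eq:Hproduct}). For (e): if $\ord(h)=i\ge 1$ then $\partial(h)(j)=h(j+1)$, so the first nonvanishing coordinate of $\partial(h)$ sits in position $i-1$. For (i): $(\int f)(0)=0$ and $(\int f)(k)=f(k-1)$ for $k\ge 1$, which shifts the order up by exactly one when $f\neq 0$. For (h): when $k<\ord(f)+\ord(g)$ every summand $\binom{k}{i}f(i)g(k-i)$ of $(fg)(k)$ vanishes, because $i<\ord(f)$ or else $k-i<\ord(g)$; only ``$\ge$'' is claimed here, since the leading coefficient $\binom{\ord(f)+\ord(g)}{\ord(f)}f(\ord(f))g(\ord(g))$ may be a zero-divisor in $A$.

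Next come the divided-power identities (a), (b), (d), (c). The key principle is that $\partial$ is a derivation with $\partial(\lge 0\rge)=0$, that $\partial(h^{[n]})=h^{[n-1]}\partial(h)$, and that $\int\circ\partial$ sends $f$ to $(0,f(1),f(2),\dots)$ and hence equals the identity on $H_0A$ (cf. Proposition~\mref{prop:basismn}(\mref{it:intpar})); thus an identity $P=Q$ with $P,Q\in H_0A$ reduces to checking $\partial P=\partial Q$. I would prove (a), (b) and (d) this way, by induction on $n$ (on $m+n$ for (d)), each step being one Leibniz computation: for (a), $\partial(n!\,h^{[n]})=n\,h^{n-1}\partial(h)=\partial(h^n)$; for (b), after reindexing both derivatives equal $\bigl(\sum_{i+j=n-1}f^{[i]}g^{[j]}\bigr)\partial(f+g)$; for (d), both equal $\binom{m+n}{m}f^{[m+n-1]}\partial(f)$ via Pascal's identity $\binom{m+n-1}{m-1}+\binom{m+n-1}{m}=\binom{m+n}{m}$. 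Each time one first checks that the terms involved lie in $H_0A$ (any $g^{[r]}$ with $r\ge 1$ is an integral, hence of order $\ge 1$) and treats the trivial base cases ($n=0$, resp. $m=0$ or $n=0$) directly. Then (c) follows similarly: since $h^{[1]}=\int\partial(h)=h$ for $h\in H_0A$, (d) with $m=1$ gives $h\,h^{[n-1]}=n\,h^{[n]}$, and hence $\partial\bigl((fh)^{[n]}\bigr)$ and $\partial\bigl(f^n h^{[n]}\bigr)$ both equal $n\,f^{n-1}\partial(f)h^{[n]}+f^n h^{[n-1]}\partial(h)$, both sides lying in $H_0A$ for $n\ge1$.

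Finally the order estimates (f), (g), (j). Item (f) is an induction on $n$: from $h^{[n]}=\int(h^{[n-1]}\partial(h))$, parts (i), (h), (e) and the induction hypothesis give
\[
\ord(h^{[n]})\ \ge\ \ord(h^{[n-1]})+\ord(\partial(h))+1\ \ge\ (n-1)\,\ord(h)+(\ord(h)-1)+1\ =\ n\,\ord(h),
\]
the degenerate case $h^{[n-1]}\partial(h)=0$ (so $h^{[n]}=0$) being vacuous; as $\ord(h)\ge 1$ this also yields $\ord(h^{[n]})\ge n$. Item (g) is merely the restatement $h^{[n]}(i)=0$ for $i<n$ of that bound. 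For (j), write $f=f(0)\lge 0\rge+g$ with $g:=f-f(0)\lge 0\rge\in H_0A$; since $\partial(f(0)\lge 0\rge)=0$ we get $(f(0)\lge 0\rge)^{[i]}=0$ for all $i\ge 1$, so (b) collapses to $f^{[n]}=g^{[n]}$, and $\ord(f^{[n]})=\ord(g^{[n]})\ge n$ by (f). (Alternatively, a direct induction works: $\ord(f^{[n-1]})\ge n-1$ forces $f^{[n-1]}\partial(f)$ to vanish in degrees $<n-1$, hence its integral to vanish in degrees $<n$.)

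The one genuinely delicate point is (d), and with it (c): over a general commutative ring one cannot simply ``divide the identity of (a) by $n!$'', since the integers need not be regular in $A$, so (d) has to be proved independently by the derivation argument above. The recurring piece of care throughout this middle block is to verify that the combinations appearing actually lie in $H_0A$ before invoking $\int\circ\partial=\id$.
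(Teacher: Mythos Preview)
Your proposal is correct and in fact more complete than the paper's own proof. The paper simply cites \cite[Proposition~2.3]{GK1} and \cite[Lemma~2.2]{KP} for items (a)--(f), then derives (g) from (f), (h) from the definition, proves (i) directly as you do, and proves (j) by the direct induction you list as your alternative (using (h) and (i) to get $\ord(f^{[k+1]})\ge \ord(f^{[k]})+\ord(\partial f)+1\ge k+1$). Your self-contained treatment of (a)--(d) via the principle ``$\int\circ\partial=\id$ on $H_0A$, so $P=Q$ in $H_0A$ reduces to $\partial P=\partial Q$'' is a clean organizing idea that the paper does not make explicit; it also makes transparent why (d) must be argued independently of (a) over rings where integers may be zero-divisors, a point the cited references handle but the present paper does not discuss. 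Your primary argument for (j)---splitting $f=f(0)\lge 0\rge+g$ and using (b) together with $(f(0)\lge 0\rge)^{[i]}=0$ for $i\ge 1$ to reduce to (f)---is neater than the paper's induction, and your alternative coincides with the paper's route.
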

\begin{proof}
The first five Items~(\mref{it:nfac})- (\mref{it:ordpn}) follow from ~\cite[Proposition~2.3]{GK1} and ~\cite[Lemma~2.2]{KP}. The Item~(\mref{it:ordpns}) follows from Item~(\mref{it:ordpn}) and Item~(\mref{it:fgc}) follows from the definition of order of Hurwitz series. We now prove  Item~(\mref{it:ordi}). Assume that $f\neq 0$ and $\ord(f)=k$ for $k\geq 0$. Then $\int f=(\underbrace{0,\cdots,0}_{k+1},f(k),\cdots)$ with $f(k)\neq 0$. Hence,
$$\ord(\int f)=k+1=\ord(f)+1.$$

Finally, we prove Item~(\mref{it:ordpnf}) by induction on $n\geq 0$.  For $n=0$, we get $f^{[n]}=1$, and so $\ord(f^{[0]})=0$.  Assume that $\ord(f^{[k]})\geq k$ has been proved for $k\geq 1$.  If $f^{[k]}\pal (f)=0$, $\ord(f^{[k+1]})= \infty$, and so Item~(\mref{it:ordpnf}) holds. Now we suppose that $f^{[k]}\pal (f)\neq 0$. Since $f^{[k+1]}=\int (f^{[k]}\pal (f))$, we have $\ord(f^{[k+1]})=\ord(f^{[k]}\pal(f))+1$ by Item~(\mref{it:ordi}). By the induction hypothesis, we know that
\delete{
$$f^{[k]}(i)=0\quad\text{for all}\,\, 0\leq i\leq k-1.$$
Then
\begin{equation}
(f^{[k]}\pal(f))(i)=\sum_{j=0}^i{i\choose j}f^{[k]}(j)\pal(f)(i-j)=0\quad \text{for all}\,\, 0\leq i\leq k-1.
\end{equation}
Hence $\ord(f^{[k]}\pal(f))\geq k$, and so,}
\begin{eqnarray*}
\ord(f^{[k+1]})&=&\ord(f^{[k]}\pal(f))+1\\
&\geq&\ord(f^{[k]})+\ord(\pal(f))+1\quad(\text{by Item~(\mref{it:fgc})})\\
&\geq &k+1.
\end{eqnarray*}
This completes the induction.
\end{proof}
According to the $n$th divided power, we define the natural exponential function
\begin{equation}
\exp:H_0A\to H_1A,\,\,h\mapsto \exp(h):=\sum_{n\in\NN}h^{[n]}.
\end{equation}
For example,
\begin{equation*}
\exp(x)=\sum_{n\in\NN}x^{[n]}=\sum_{n\in\NN}\lge n\rge=(1,1,1\cdots)=:\bar{1}.
\end{equation*}
By Proposition~\mref{prop:nfacpro}~(\mref{it:ordpns}), we get
\begin{equation}
\exp(h)(n)=\sum_{k=0}^nh^{[k]} (n)+\sum_{k=n+1}^\infty h^{[k]}(n)=\sum_{k=0}^nh^{[k]} (n)\quad\text{for all}\, n\in\NN.
\mlabel{eq:expn}
\end{equation}
Since $\bar{1}\in H_1A$, we have the inverse of $\bar{1}$, and $\bar{1}^{-1}=(1,-1,1,-1,\cdots)$ by Eq.~(\mref{eq:inverse}). Then  $\sin$ and $\cos$ functions in $HA$ can be defined as follows:
\begin{equation}
\sin:=\intl(0,\bar{1}^{-1})\quad\text{and}\quad \cos:=\intl(\bar{1}^{-1},0).
\mlabel{eq:sincos}
\end{equation}
Note that $\sin\in H_0A$. Then
\begin{equation}
\exp(\sin)=\sum_{n\in\NN}\sin^{[n]}=(1,1,1,0,-3,-8,-3,56,217,64,\cdots).
\mlabel{eq:expsinv}
\end{equation}
But $\exp(\cos)$ does not make sense because $\cos\not\in H_0A$. In addition, since $\exp(\sin)\in H_1A$,  we have
\begin{equation}
\exp(\sin)^{-1}=(1,-1,1,0,-3,8,-3,-56,217,-64,\cdots).
\end{equation}
\begin{prop}\mlabel{prop:derexp}
Let $f\in HA$ and let $g,h\in H_0A$. Then
\begin{enumerate}
\item
$\pal(f^k)=k f^{k-1}\pal(f)$ for all $k\in\ZZ$.
\item
$\exp(h)^{-1}=\exp(-h)$.
\item
$\pal(\exp(h))=\exp(h)\pal(h).$
\end{enumerate}
\end{prop}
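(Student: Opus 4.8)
The plan is to dispatch the three identities in turn, reducing each to the derivation axioms for $\pal$ on $HA$ together with the elementary facts about divided powers recorded above. The one point needing care throughout is that the series defining $\exp$ are locally finite --- by Proposition~\mref{prop:nfacpro}~(\mref{it:ordpn}), $\ord(h^{[n]})\geq n$ for $h\in H_0A$ --- so they may be multiplied, regrouped, and differentiated coefficientwise.

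For the identity $\pal(f^{k})=kf^{k-1}\pal(f)$, I would induct on $k\geq0$ using the Leibniz rule for $\pal$. The cases $k=0$, where $\pal(f^{0})=\pal(1)=0$, and $k=1$ are immediate, and the inductive step is
\[
\pal(f^{k+1})=\pal(f\cdot f^{k})=\pal(f)\,f^{k}+f\,\pal(f^{k})=\pal(f)\,f^{k}+f\cdot kf^{k-1}\pal(f)=(k+1)f^{k}\pal(f).
\]
For $k<0$, where $f\in H_\ast A$ is tacitly assumed so that $f^{k}$ is defined, I would differentiate the identity $f^{-k}f^{k}=1$, substitute the already proved case $-k>0$ for $\pal(f^{-k})$, and solve for $\pal(f^{k})$; multiplying through by $f^{k}$ yields the claim.

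For $\exp(h)^{-1}=\exp(-h)$, I would compute $\exp(h)\exp(-h)$ directly. Writing $\exp(h)=\sum_{i\geq0}h^{[i]}$ and $\exp(-h)=\sum_{j\geq0}(-h)^{[j]}$, multiplying, and regrouping by $n=i+j$ --- legitimate because $\ord(h^{[i]}(-h)^{[j]})\geq i+j$ by Proposition~\mref{prop:nfacpro}~(\mref{it:fgc}) and (\mref{it:ordpn}), so all sums are coefficientwise finite --- the addition formula $(f+g)^{[n]}=\sum_{i+j=n}f^{[i]}g^{[j]}$ of Proposition~\mref{prop:nfacpro} gives
\[
\exp(h)\exp(-h)=\sum_{n\geq0}\ \sum_{i+j=n}h^{[i]}(-h)^{[j]}=\sum_{n\geq0}\big(h+(-h)\big)^{[n]}=\sum_{n\geq0}0^{[n]}=1,
\]
since $0^{[0]}=1$ and $0^{[n]}=\int\!\big(0^{[n-1]}\pal(0)\big)=0$ for $n\geq1$; note also $-h\in H_0A$, so the left-hand side is defined.

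For $\pal(\exp(h))=\exp(h)\pal(h)$, I would differentiate $\exp(h)=\sum_{n\geq0}h^{[n]}$ term by term; since $\pal$ is the shift operator it commutes with the locally finite sum, and using $\pal(h^{[0]})=\pal(1)=0$ together with the power rule $\pal(h^{[n]})=h^{[n-1]}\pal(h)$ recorded just after Definition~\mref{defn:dividpow},
\[
\pal(\exp(h))=\sum_{n\geq1}\pal(h^{[n]})=\sum_{n\geq1}h^{[n-1]}\pal(h)=\Big(\sum_{m\geq0}h^{[m]}\Big)\pal(h)=\exp(h)\pal(h).
\]
No deep obstacle is expected: the first identity is pure induction on top of the derivation property, and the other two are formal consequences of the addition and power rules for divided powers. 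The only step requiring a genuine argument is the justification that the $\exp$-series may be multiplied, rearranged, and differentiated term by term, which is precisely what the bound $\ord(h^{[n]})\geq n$ on $H_0A$ supplies.
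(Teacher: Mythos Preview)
Your proposal is correct. The paper's own proof is essentially a citation: it says the first item follows from $\pal$ being a derivation and the last two items follow from \cite[Proposition~2.9]{GK1}. Your argument for item~(a) is exactly what the paper's one-line remark unpacks to, and your proofs of items~(b) and~(c) --- via the addition formula for divided powers and termwise differentiation, respectively --- are the standard self-contained arguments that the cited reference presumably supplies. The only real difference is that you spell out the local-finiteness justification (via $\ord(h^{[n]})\geq n$) that makes the infinite sums and their rearrangements legitimate, whereas the paper leaves this implicit in the citation.
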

\begin{proof}
The first Item follows from that $\pal$ is a derivation and the last two Items follow from ~\cite[Proposition~2.9]{GK1}.
\end{proof}

\begin{exam}\mlabel{exam:sinexp}
Consider the differential equation in $HA$
\begin{equation}
\partial^2 (y)=\cos\cdot \partial(y)-\sin\cdot y.
\mlabel{eq:pcs}
\end{equation}
Firstly, $0$ is a solution of Eq.~(\mref{eq:pcs}). We know that $h\in HA$ is a solution of Eq.~(\mref{eq:pcs}) if and only if
$$h(n+2)=\sum_{i=0}^{n}{n\choose i}\bigg(\cos(i)h(n-i+1)-\sin(i)h(n-i)\bigg)\quad\text{for all}\,n\in\NN$$
or
$$h(n)=\sum_{i=0}^{n-2}{n-2\choose i}\bigg(\cos(i)h(n-i-1)-\sin(i)h(n-i-2)\bigg)\quad\text{for all}\,n\geq2.$$
For each $n=2,3,4,5,6,7,8,9$, we get
\begin{align*}
h(2)&=h(1),&h(3)&=h(2)-h(0),\\
h(4)&=h(3)-3h(1),&h(5)&=h(4)-6h(2)+h(0),\\
h(6)&=h(5)-10h(3)+5h(1),&h(7)&=h(6)-15h(4)+15h(2)-h(0),\\
h(8)&=h(7)-21h(5)+35h(3)-7h(1),&h(9)&=h(8)-28h(6)+70h(4)-28h(2)+h(0).
\end{align*}
Thus,
if we take $h(0)=1$ and $h(1)=0$, we get a solution of Eq.~(\mref{eq:pcs})
$$z_1:=(1,0,0,-1,-1,0,10,24,-11,-360,\cdots),$$
which is a new sequence, that is, a sequence may not be found in OEIS~\cite{Slo}.
In addition,
if we let $h(0)=0$ and $h(1)=1$, then we obtain another solution of Eq.~(\mref{eq:pcs})
\begin{equation}
z_2:=(0,1,1,1,-2,-8,-13,32,228,424,\cdots),
\mlabel{eq:z2}
\end{equation}
which is also a new sequence.  Note that both solutions $z_1$ and $z_2$ are $A$-linearly independent.

By Corollary~\mref{coro:solu}, we know that $c_1z_1+c_2z_2$ are solutions of Eq.~(\mref{eq:pcs}) for all $c_1,c_2\in A$.
In particular, if $c_0=c_1=1$, then
$$z_0:=z_1+z_2=(1,1,1,0,-3,-8,-3,56,217,64,\cdots),$$
is also a solution of  Eq.~(\mref{eq:pcs}), and
we see that $z_0=\exp(\sin)$ by Eq.~(\mref{eq:expsinv}). Furthermore, $z_0$ is exactly the sequence A002017 whose exponential generating function is just $\exp(\sin(x))$  in OEIS.
\end{exam}

\subsection{The general solutions to second-order linear differential equations}
\mlabel{subsec:SolSec}
We next consider a general second-order linear homogeneous differential equation
\begin{equation}
\partial^2(y)+h_1\partial(y)+h_0y=0,
\mlabel{eq:sec-ord}
\end{equation}
where $h_0,h_1\in HA$.

\begin{prop}\mlabel{prop:secsol}
Let $y_1\in H_\ast A$ be an invertible solution of Eq.~(\mref{eq:sec-ord}). Then
\begin{equation}
y_2:=y_1\int \big(\exp(-\int h_1) y_1^{-2}\big)
\mlabel{eq:secsol}
\end{equation}
is also a solution of  Eq.~(\mref{eq:sec-ord}).
\end{prop}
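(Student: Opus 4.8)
The plan is to run the classical method of reduction of order, which transfers verbatim because $\partial=\partial_A$ is a derivation on $HA$ and $\partial\circ\int=\id_{HA}$. First I would check that every ingredient of $y_2$ lives in $HA$. Since $y_1\in H_\ast A$, the inverse powers $y_1^{-1},y_1^{-2}\in HA$ exist by Eq.~\mref{eq:inverse}. Moreover $-\int h_1\in H_0A$: if $h_1\neq 0$ then $\ord(\int h_1)=\ord(h_1)+1\geq 1$ by Proposition~\mref{prop:nfacpro}~(\mref{it:ordi}), and $\int 0=0\in H_0A$ by convention; hence $\exp(-\int h_1)\in H_1A$ is well defined. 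Write $y_2=y_1v$ with $v:=\int\big(\exp(-\int h_1)\,y_1^{-2}\big)$ and set $w:=\partial(v)=\exp(-\int h_1)\,y_1^{-2}$, using $\partial\circ\int=\id_{HA}$.

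Next, using the Leibniz rule for $\partial$ I would expand $\partial(y_2)=\partial(y_1)\,v+y_1w$ and $\partial^2(y_2)=\partial^2(y_1)\,v+2\partial(y_1)\,w+y_1\,\partial(w)$. Substituting into $L(y_2)=\partial^2(y_2)+h_1\partial(y_2)+h_0y_2$ and grouping the terms with a factor $v$ gives
\[
L(y_2)=v\big(\partial^2(y_1)+h_1\partial(y_1)+h_0y_1\big)+2\partial(y_1)\,w+y_1\,\partial(w)+h_1y_1w .
\]
The first summand vanishes since $y_1$ solves Eq.~\mref{eq:sec-ord}, so it remains to prove $2\partial(y_1)\,w+y_1\,\partial(w)+h_1y_1w=0$.

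For this I would compute $\partial(w)$ explicitly. By Proposition~\mref{prop:derexp}~(c), $\partial(\exp(-\int h_1))=\exp(-\int h_1)\,\partial(-\int h_1)=-h_1\exp(-\int h_1)$ (again $\partial\circ\int=\id$); and by Proposition~\mref{prop:derexp}~(a) with exponent $-2$, $\partial(y_1^{-2})=-2\,y_1^{-3}\partial(y_1)$. The Leibniz rule applied to $w=\exp(-\int h_1)\,y_1^{-2}$ then yields $\partial(w)=-h_1w-2w\,y_1^{-1}\partial(y_1)$, so $y_1\,\partial(w)=-h_1y_1w-2w\,\partial(y_1)$. Plugging this in and using commutativity of $HA$, $2\partial(y_1)\,w+y_1\,\partial(w)+h_1y_1w=2\partial(y_1)\,w-h_1y_1w-2\partial(y_1)\,w+h_1y_1w=0$, which completes the proof, i.e.\ $y_2$ given by Eq.~\mref{eq:secsol} satisfies $L(y_2)=0$.

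I do not expect a genuine obstacle here: the argument is a routine computation once the derivation and Rota-Baxter identities are in place. The only points requiring care are (i) justifying that $\exp(-\int h_1)$ and the inverse powers of $y_1$ are legitimate elements of $HA$, which is handled above via $y_1\in H_\ast A$ and $-\int h_1\in H_0A$, and (ii) keeping track of signs in the two Leibniz expansions. A minor variant that sidesteps $y_1^{-3}$ is to differentiate the identity $w\,y_1^{2}=\exp(-\int h_1)$ instead, but invoking Proposition~\mref{prop:derexp}~(a) directly is cleaner.
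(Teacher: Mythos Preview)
Your proof is correct and follows essentially the same route as the paper: both arguments verify $L(y_2)=0$ by expanding $\partial^2(y_2)+h_1\partial(y_2)+h_0y_2$ via the Leibniz rule, invoking Proposition~\mref{prop:derexp} for $\partial(\exp(-\int h_1))$ and $\partial(y_1^{-2})$, and then grouping terms so that the factor multiplying $v=\int(\exp(-\int h_1)y_1^{-2})$ vanishes by the hypothesis on $y_1$ while the remaining terms cancel pairwise. Your version is in fact slightly more careful than the paper's, since you explicitly check that $-\int h_1\in H_0A$ (so that $\exp$ is defined) and that $y_1^{-2}$ exists, points the paper leaves implicit.
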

\begin{proof}
We need only verify that
$\pal^2(y_2)+h_1\pal(y_2)+h_0y_2=0.$
Since $y_1$ is a solution of  Eq.~(\mref{eq:sec-ord}), we have
\begin{eqnarray*}
&&\pal^2(y_2)+h_1\pal(y_2)+h_0y_2\\
&=&\pal^2(y_1)\int (\exp(-\int h_1) y_1^{-2})+2\pal(y_1)(\exp(-\int h_1) y_1^{-2})\\
&&+y_1\bigg(\exp(-\int h_1)(-h_1)y_1^{-2}-2\exp(-\int h_1)y_1^{-3}\pal(y_1)\bigg)\quad(\text{by Proposition~\mref{prop:derexp}})\\
&&+h_1\bigg(\pal(y_1)\int \big( \exp(-\int h_1)y_1^{-2}\big)+y_1\exp(-\int h_1)y_1^{-2}\bigg)+h_0y_1\int\big( \exp(-\int h_1)y^{-2}\big)\\
&=&\bigg(\pal^2(y_1)+h_1\pal(y_1)+h_0y_1\bigg)\int \big( \exp(-\int h_1)y_1^{-2}\big)\\
&&+2\pal(y_1)(\exp(-\int h_1) y_1^{-2})-2\exp(-\int h_1)y_1^{-2}\pal(y_1)\\
&&+y_1\exp(-\int h_1)(-h_1)y_1^{-2}+h_1y_1\exp(-\int h_1)y_1^{-2}\\
\hspace{1cm}&=&0.\hspace{13cm}\mbox{\qedhere}
\end{eqnarray*}
\end{proof}
\begin{theorem}\mlabel{thm:mainres}
Let $y_1\in H_\ast A$ be an invertible solution of Eq.~(\mref{eq:sec-ord}).
Let $y_2$ be defined as in Eq.~(\mref{eq:secsol}).
Then $y_1$ and $y_2$ are linearly independent, and further,  the general solution of Eq.~(\mref{eq:sec-ord}) is given by
\begin{equation}
y=y_1 \Big(c_1+c_2\int \big(\exp(-\int h_1) y_1^{-2}\big)\Big)\quad \text{for all}\,\, c_1,c_2\in A.
\end{equation}
\end{theorem}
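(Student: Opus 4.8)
The plan is to reduce the theorem to the two facts already established: that $y_2$ is a solution of Eq.~(\mref{eq:sec-ord}) (Proposition~\mref{prop:secsol}) and that the space $V$ of solutions of Eq.~(\mref{eq:sec-ord}) is a two-dimensional $A$-vector space (Proposition~\mref{prop:vsp}). Granting these, it suffices to show that $y_1$ and $y_2$ are $A$-linearly independent; then $\{y_1,y_2\}$ is a basis of $V$, Corollary~\mref{coro:solu} gives that every solution has the form $c_1y_1+c_2y_2$ with $c_1,c_2\in A$, and factoring out the invertible element $y_1$ rewrites this as $y=y_1\big(c_1+c_2\int(\exp(-\int h_1)y_1^{-2})\big)$, which is the asserted general solution.

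Thus the heart of the argument is the linear independence of $y_1$ and $y_2$. First I would check that $g:=\exp(-\int h_1)\,y_1^{-2}$ is a well-defined invertible Hurwitz series: every integral has order $\geq 1$ by Proposition~\mref{prop:nfacpro}~(\mref{it:ordi}), so $\int h_1\in H_0A$, hence $-\int h_1\in H_0A$ and $\exp(-\int h_1)\in H_1A\subseteq H_\ast A$; since $y_1\in H_\ast A$, also $y_1^{-2}\in H_\ast A$, so $g\in H_\ast A$ and $\vep(g)=\vep(y_1)^{-2}\neq 0$. In particular $g\neq 0$, so by Proposition~\mref{prop:nfacpro}~(\mref{it:ordi}) the series $w:=\int g$ has order exactly $1$; that is, $w(0)=0$ and $w(1)=g(0)=\vep(g)\neq 0$.

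Now suppose $a_1y_1+a_2y_2=0$ with $a_1,a_2\in A$. Since $y_2=y_1w$ and $y_1$ is invertible, multiplying by $y_1^{-1}$ yields $a_1+a_2w=0$ in $HA$. Applying the ring homomorphism $\vep$ (equivalently, reading off the $0$th coordinate) gives $a_1+a_2w(0)=a_1=0$, whence $a_2w=0$; reading off the first coordinate then gives $a_2w(1)=0$, and since $A$ is a field with $w(1)=\vep(g)\neq 0$ we get $a_2=0$. Hence $y_1$ and $y_2$ are linearly independent, completing the proof.

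I expect the only real subtlety to be the coordinatewise bookkeeping in this last paragraph: one must use the invertibility of $y_1$ to legitimately cancel it, observe that the constants $a_i\in A$ really do act as scalars on coordinates (so that the $0$th coordinate isolates $a_1$ and, after that, the first coordinate isolates $a_2$), and combine this with the order computation $\ord(w)=1$. Everything else is a direct invocation of Proposition~\mref{prop:secsol}, Proposition~\mref{prop:vsp} and Corollary~\mref{coro:solu}, so no new machinery is needed.
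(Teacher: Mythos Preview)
Your proof is correct and follows essentially the same approach as the paper: reduce to linear independence via Proposition~\mref{prop:vsp} and Corollary~\mref{coro:solu}, then evaluate at the $0$th and $1$st coordinates to force $a_1=a_2=0$. Your preliminary multiplication by $y_1^{-1}$ slightly streamlines the computation compared to the paper's direct evaluation of $y_2(0)$ and $y_2(1)=y_1(0)^{-1}$, but the argument is the same.
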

\begin{proof}
By Proposition~\mref{prop:vsp}, it suffices to prove that $y_1$ and $y_2=y_1\int \big(\exp(-\int h_1) y_1^{-2}\big)$ are linearly independent. Suppose that there exist $a_1,a_2\in A$ such that $a_1y_1+a_2y_2=0$. This gives
$$a_1y_1(0)+a_2y_2(0)=0.$$
 Since
 $$y_2(0)=y_1(0)\int \big(\exp(-\int h_1) y_1^{-2}\big)(0)=0,$$ we have
$a_1y(0)=0$, and so $a_1=0$ by $y(0)$ being invertible. Note that
\begin{eqnarray*}
y_2(1)&=&y_1(1)\int \big(\exp(-\int h_1) y_1^{-2}\big)(0)+y_1(0)\int \big(\exp(-\int h_1) y_1^{-2}\big)(1)\\
&=&y_1(0)\big(\exp(-\int h_1) y_1^{-2}\big)(0)\\
&=&y_1(0)\exp(-\int h_1)(0)\, y_1^{-2}(0)\\
&=&y_1(0)^{-1}.
\end{eqnarray*}
Since $a_2y_2=0$, we have $a_2y_2(1)=0$, and hence $a_2=0$.
\end{proof}

By Example~\mref{exam:sinexp}, we know that $y_1=\exp(\sin)$ is an invertible solution of Eq.~(\mref{eq:pcs})
$$\partial^2 (y)=\cos\cdot \partial(y)-\sin\cdot y.$$
According to Proposition~\mref{prop:secsol}, we obtain another solution
\begin{eqnarray*}
y_2&=&y_1\int \big(\exp(-\int h_1) y_1^{-2}\big)\\
&=&\exp(\sin)\int(\exp(\int \cos)\exp(-\sin)^2)\quad(\text{by}\,\exp(\sin)^{-1}=\exp(-\sin) )\\
&=&\exp(\sin)\int(\exp(\sin)\exp(-\sin)^2\quad(\text{by}\,\int \cos =\sin)\\
&=&\exp(\sin)\int\exp(-\sin).
\end{eqnarray*}
In fact, $y_2$ is exactly $z_2=(0,1,1,1,-2,-8,-13,32,228,424,\cdots)$ defined as in Eq.~(\mref{eq:z2}).
Thus, by Theorem~\mref{thm:mainres}, we get the general solution of Eq.~(\mref{eq:pcs}):
$$y=\exp(\sin)\Big(c_1+c_2\int\exp(-\sin)\Big).$$

Take some more examples on the second-order differential equations.
\begin{exam} \mlabel{exam:ordx}
Consider a second-order differential equation
\begin{equation}
\pal^2(y)+(1+x)\pal(y)-y=0.
\mlabel{eq:ordx}
\end{equation}
We know that $y=1+x\in H_1A$  is an invertible solution of Eq.~(\mref{eq:ordx}). Then the general solution of Eq.~(\mref{eq:ordx}) is
\begin{equation}
y=(1+x)\bigg(c_1+c_2\int \Big(\exp(-\int (1+x)) (1+x)^{-2}\Big)\bigg).
\mlabel{eq:gener2}
\end{equation}
\end{exam}
\delete{
\begin{problem}
Let $h_0,h_1\in HA$. Let
\begin{equation}
\pal^2(y)+h_1\pal(y)+h_0y=0.
\mlabel{eq:order2}
\end{equation}
If $y_0,y_1$ is a basis for the solution space of Eq.~(\mref{eq:order2}), then there exists a unique invertible solution of Eq.~(\mref{eq:order2}).
\end{problem}
}
\begin{exam}
By Example~\mref{exam:ordx}, we see that $y_0:=1+x$ is an invertible solution of Eq.~(\mref{eq:ordx}). Furthermore, if we take $c_1=c_2=1$ in Eq.~(\mref{eq:gener2}), we obtain another solution
$$y_1:=(1+x)\bigg(1+\int \Big(\exp(-\int (1+x)) (1+x)^{-2}\Big)\bigg).$$
Since $y_0$ and $y_1$ are linearly independent,  $y_0,y_1$ forms a basis for the solution space of Eq.~(\mref{eq:ordx}).
\end{exam}

\subsection{The method of reduction of order}
\mlabel{sec:RedOrd}

We now discuss the reduction of order in $HA$. Let $y_1\in H_\ast A$ be an invertible solution of Eq.~(\mref{eq:sec-ord}).
Suppose that $y_2=hy_1$ is another solution of Eq.~(\mref{eq:sec-ord}) for some $h\in HA$.
Then
$$\partial^2(y_2)+h_1\pal(y_2)+h_0y_2=0.$$
Since $\pal$ is a derivation and $y_1$ is a solution of Eq.~(\mref{eq:sec-ord}), we have
\begin{eqnarray*}
&&\pal^2(h y_1)+h_1\pal(h y_1)+h_0(h y_1)\\
&=&\pal^2(h)y_1+2\pal(h)\pal(y_1)+h\pal^2(y_1)
+h_1(\pal(h)y_1)+h\pal(y_1))+h_0(h y_1)\\
&=&\pal^2(h)y_1+2\pal(h)\pal(y_1)+h_1\pal(h)y_1.
\end{eqnarray*}
Hence,
$$\pal^2(h)y_1+2\pal(h)\pal(y_1)+h_1\pal(h)y_1=0.$$
Since $y_1\in H_\ast A$ is invertible, we know that
$$\pal^2(h)=-\pal(h)\big(2\pal(y_1)y_1^{-1}+h_1\big).$$
This gives a recursive formula
\begin{equation}
h(n+2)=-\sum_{i=0}^{n}{n\choose i} h(i+1)\big(2\pal(y_1)y_1^{-1}+h_1\big)(n-i)\quad\text{for all}\,\,n\in\NN.
\mlabel{eq:secrecfor}
\end{equation}
In order to simplify certain calculations, we set $\Delta=2\pal(y_1)y_1^{-1}+h_1$. Thus, Eq.~(\mref{eq:secrecfor}) becomes
\begin{equation}
h(n+2)=-\sum_{i=0}^{n}{n\choose i} h(i+1)\Delta(n-i)\quad\text{for all}\,n\in\NN.
\mlabel{eq:secrecfor1}
\end{equation}
For instance, for each $n=0,1,2,3$, we obtain
\begin{align*}
h(2)&=-h(1)\Delta(0)\\
h(3)&=-\bigg(h(1)\Delta(1)+h(2)\Delta(0)\bigg)=-h(1)\bigg(\Delta(1)-\Delta(0)^2\bigg)\\
h(4)&=-\bigg(h(1)\Delta(2)+2h(2)\Delta(1)+h(3)\Delta(0)\bigg)=-h(1)\bigg(\Delta(2)-3\Delta(0)\Delta(1)+\Delta(0)^3\bigg)\\
h(5)&=-\bigg(h(1)\Delta(3)+3h(2)\Delta(2)+3h(3)\Delta(1)+h(4)\Delta(0)\bigg)\\
&=-h(1)\bigg(\Delta(3)-4\Delta(0)\Delta(2)-3\Delta(1)^2+6\Delta(0)^2\Delta(1)-\Delta(0)^4\bigg).
\end{align*}
Thus, if we let $h(0)=a$ and $h(1)=b$ for $a,b\in A$,  then we  obtain
$$h=(a,b,-b\Delta(0),-b(\Delta(1)-\Delta(0)^2),\cdots).$$

According to the method of  reduction of order~\cite{BD} and Abel's Theorem~\cite[Theorem~1.2]{Green}, the  second-order linear homogeneous differential equation in $A[[t]]$:
\begin{equation}
u''(t)+p(t)u'(t)+q(t)u(t)=0
\mlabel{eq:secordpo}
\end{equation}
has a solution
\begin{equation*}
u_2(t)=u_1(t)\int v(t) u_1(t)^{-2}dt,
\end{equation*}
where $u_1(t)$ is any non-zero solution of Eq.~(\mref{eq:secordpo}) and $v(t)=e^{-\int p(t)dt}$.
Since $u_1(t)$ and $u_2(t)$ are linearly independent, we know that the general of solution of Eq.~(\mref{eq:secordpo}) is
\begin{equation}
y=u_1(t)\Big(c_1+c_2\int e^{-\int p(t)dt}u_1(t)^{-2} dt\Big)\quad\text{for all}\,\,c_1,c_2\in A.
\mlabel{eq:genesolu}
\end{equation}

Suppose that $\QQ\subseteq A$. Then we know that there exists an  isomorphism of differential algebras
\begin{equation}
\Phi:HA\to A[[t]],\quad h\mapsto h(t):=\sum_{n=0}^\infty\frac{h(n)}{n!} t^n.
\mlabel{eq:cong}
\end{equation}
Consequently,  the inverse of $\Phi$ is
\begin{equation}
\Phi^{-1}: A[[t]]\to HA,\quad \sum_{n=0}^\infty h(n) t^n\mapsto (n!h(n)).
\mlabel{eq:invcong}
\end{equation}

Furthermore, we have
\begin{theorem}Let $\QQ\subseteq A$. Let $y_1\in H_\ast A$ be an invertible solution of Eq.~(\mref{eq:sec-ord}). Suppose that $y_2=hy_1$ is another solution of Eq.~(\mref{eq:sec-ord}) for some $h\in HA$.  Then
\begin{equation}
h=h(1)y_1(0)^2\int\Big(\exp(-\int h_1)y_1^{-2}\Big)+h(0).
\mlabel{eq:hspe}
\end{equation}
\end{theorem}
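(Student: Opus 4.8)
The plan is to integrate the reduced first-order equation $\pal^2(h)=-\pal(h)\Delta$ established above, where $\Delta=2\pal(y_1)y_1^{-1}+h_1$, by means of an integrating factor. Put $w:=\pal(h)$, so that $\pal(w)=-w\Delta$. Since every image of $\int$ lies in $H_0A$ and $\pal\circ\int=\id$, the element $E:=\exp(\int\Delta)$ is well defined, belongs to $H_1A$ (hence $E(0)=1$), is invertible with $E^{-1}=\exp(-\int\Delta)$ by Proposition~\mref{prop:derexp}, and satisfies $\pal(E)=\exp(\int\Delta)\,\pal(\int\Delta)=E\Delta$ again by Proposition~\mref{prop:derexp}. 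Hence $\pal(wE)=\pal(w)E+wE\Delta=E\big(\pal(w)+w\Delta\big)=0$, so $wE$ is a constant Hurwitz series; evaluating at $0$ and using $E(0)=1$ gives $wE=w(0)$, that is, $w=w(0)\exp(-\int\Delta)$.

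The crux is to re-express $\exp(-\int\Delta)$ in terms of $y_1$. Set $q:=\pal(y_1)y_1^{-1}$, so that $qy_1=\pal(y_1)$ and $\int q\in H_0A$; then $\pal\big(\exp(-\int q)\,y_1\big)=-\exp(-\int q)\,q\,y_1+\exp(-\int q)\,\pal(y_1)=0$, whence $\exp(-\int q)\,y_1$ is constant and, evaluating at $0$ and using $\exp(-\int q)(0)=1$, we get $\exp(-\int q)=y_1(0)\,y_1^{-1}$. Because $\Delta=2q+h_1$ while $\int$ is $A$-linear and $\exp$ is additive on $H_0A$ — summing the identity $(f+g)^{[n]}=\sum_{i+j=n}f^{[i]}g^{[j]}$ from Proposition~\mref{prop:nfacpro} over all $n$ gives $\exp(f+g)=\exp(f)\exp(g)$ for $f,g\in H_0A$ — it follows that $\exp(-\int\Delta)=\exp(-\int q)^2\,\exp(-\int h_1)=y_1(0)^2\,y_1^{-2}\,\exp(-\int h_1)$. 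Substituting into the conclusion of the first step gives $w=w(0)\,y_1(0)^2\,\exp(-\int h_1)\,y_1^{-2}$, with $w(0)=\pal(h)(0)=h(1)$.

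To finish, observe that $\int\pal(h)=(0,h(1),h(2),\dots)=h-h(0)$, so $h=\int\pal(h)+h(0)$; pulling the constant $h(1)\,y_1(0)^2$ out of the integral by $A$-linearity of $\int$ yields exactly $h=h(1)\,y_1(0)^2\int\big(\exp(-\int h_1)\,y_1^{-2}\big)+h(0)$. The only delicate points are the exponential identity $\exp(-\int q)=y_1(0)\,y_1^{-1}$ and the splitting $\exp(-\int\Delta)=\exp(-2\int q)\,\exp(-\int h_1)$, both of which rest on the divided-power formalism rather than on $\QQ\subseteq A$; so this argument does not actually use the hypothesis $\QQ\subseteq A$, which is instead what makes available the alternative proof by transporting everything to $A[[t]]$ through the isomorphism $\Phi$ of Eq.~(\mref{eq:cong}), applying the reduction-of-order formula in Eq.~(\mref{eq:genesolu}), and reading off $c_1=h(0)$ and $c_2=h(1)\,y_1(0)^2$.
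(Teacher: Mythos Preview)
Your proof is correct and takes a genuinely different route from the paper's. The paper argues by transport of structure: using $\QQ\subseteq A$, it invokes the differential-algebra isomorphism $\Phi:HA\to A[[t]]$ of Eq.~(\mref{eq:cong}), applies the classical reduction-of-order/Abel formula Eq.~(\mref{eq:genesolu}) in $A[[t]]$, and pulls the result back, reading off $c_1=h(0)$ and $c_2=h(1)y_1(0)^2$. You instead stay entirely inside $HA$: you solve the first-order equation $\pal(w)=-w\Delta$ by the integrating factor $E=\exp(\int\Delta)$, then identify $\exp(-\int\Delta)$ concretely via the logarithmic-derivative trick $\exp(-\int q)=y_1(0)\,y_1^{-1}$ (for $q=\pal(y_1)y_1^{-1}$) together with the multiplicativity $\exp(f+g)=\exp(f)\exp(g)$ on $H_0A$ coming from Proposition~\mref{prop:nfacpro}. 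What this buys is exactly what you note at the end: your argument never uses $\QQ\subseteq A$, so it in fact establishes the theorem over any commutative ring $A$, whereas the paper's proof genuinely needs the rational subring to make $\Phi$ an isomorphism. Conversely, the paper's approach is shorter once one is willing to quote the classical formula, and it makes transparent that Eq.~(\mref{eq:hspe}) is nothing more than the Hurwitz-series avatar of the familiar calculus identity.
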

\begin{proof}
Under the isomorphism $\Phi$  in Eq.~(\mref{eq:cong}),  Eq.~(\mref{eq:sec-ord})
becomes a second-order linear homogeneous differential equation in $A[[t]]$
\begin{equation}
y''(t)+h_1(t)y'(t)+h_0(t)y(t)=0.
\mlabel{eq:secordha}
\end{equation}
Thus, $y_2(t)=h(t)y_1(t)$ is a solution of Eq.~(\mref{eq:secordha}).
By Eq.~(\mref{eq:genesolu}), we have
$$y_2=h(t)y_1(t)=y_1(t)\Big(c_1+c_2\int e^{-\int h_1(t)dt}y_1(t)^{-2} dt\Big)\quad\text{for all}\,\,c_1,c_2\in A.$$
This shows that
$$h(t)=c_1+c_2\int e^{-\int h_1(t)dt}y_1(t)^{-2} dt.$$
Applying the isomorphism $\Phi^{-1}$ defined in Eq.~(\mref{eq:invcong}) to the above equation, we get
$$h=c_1+c_2\int \Big(\exp(-\int h_1 )y_1^{-2}\Big).$$
Furthermore, it follows that
$$c_1=h(0) \quad\text{and}\quad c_2=h(1)y_1(0)^2,$$
as desired.
\end{proof}

\delete{
It suffices to prove that
\begin{equation}
h(n)=\Big(h(1)y_1^2(0)\int(\exp(-\int h_1)y_1^{-2})+h(0)\Big)(n)\quad \text{for all}\,\, n\in\NN.
\mlabel{eq:hspen}
\end{equation}
We see that Eq.~(\mref{eq:hspen}) holds for $n=0,1$. Thus, by Eq.~(\mref{eq:secrecfor}), we need only prove that
\begin{equation}
h(1)y_1^2(0)\Big(\exp(-\int h_1)y_1^{-2}\Big)(n-1)=-\sum_{i=0}^{n-2}{n-2\choose i} h(i+1)\big(2\pal(y_1)y_1^{-1}+h_1\big)(n-2-i)
\end{equation}
for all\,$n\geq 2$.
For $n=2$, we have
\begin{eqnarray*}
&&h(1)y_1^2(0)\Big(\exp(-\int h_1)y_1^{-2}\Big)(1)\\
&=&h(1)y_1^2(0)\Big(y_1^{-2}(1)-h_1(0)y_1^{-2}(0)\Big)\quad(\text{by the equation}\,\,\exp(-\int h_1)(1)=-h_1(0))\\
&=&-h(1)\Big(2y_1(1)y_1^{-1}(0)+h_1(0)\Big)\quad(\text{by the equation}\,\, y_1^{-2}(1)=-2y_1(1)y_1^{-3}(0))\\
&=&-h(1)\Delta(0)\\
&=&h(2).
\end{eqnarray*}
For $n=3$,
since
$$y_1^{-2}(2)=-2y^{-1}(0)^2(2y_1(1)y_1^{-1}(1)+y_1(2)y_1^{-1}(0))+2y_1^{-1}(0)^4y_1(1)^2$$
$$\exp(-\int h_1)(1)=-h_1(0),\, y_1^{-2}(1)=-2y_1^{-1}(0)^3y_1(1),\,\exp(-\int h_1)(2)=-h_1(1)+h_1(0)^2.$$
we have
\begin{eqnarray*}
&&(\exp(-\int h-1)y_1^{-2})(2)\\
&=&\exp(-\int h_1)(0)y_1^{-2}(2)+2\exp(-\int h_1)(1)y_1^{-2}(1)+\exp(-\int h_1)(2)y_1^{-2}(0)\\
&=&-2y^{-1}(0)^2(2y_1(1)y_1^{-1}(1)+y_1(2)y_1^{-1}(0))+2y_1^{-1}(0)^4y_1(1)^2\\
&&+4h_1(0)y_1^{-1}(0)^3y_1(1)+(-h_1(1)+h_1(0)^2)y_1^{-2}(0).
\end{eqnarray*}
So
\begin{eqnarray*}
&&h(1)y_1^{2}(0)(\exp(-\int h_1)y_1^{-2})(2)\\
&=&-2h(1)(2y_1(1)y_1^{-1}(1)+y_1(2)y_1^{-1}(0))+2h(1)y_1^{-1}(0)^2y_1(1)^2\\
&&+4h(1)h_1(0)y_1^{-1}(0)y_1(1)+h(1)(-h_1(1)+h_1(0)^2)\\
&=&-2h(1)(y_1(1)y_1^{-1}(1)+y_1(2)y_1^{-1}(0))+4h(1)y_1^{-1}(0)^2y_1(1)^2\quad(\text{by}\,\, y_1^{-1}(1)=-y_1(1)y_1^{-2}(0))\\
&&+4h(1)h_1(0)y_1^{-1}(0)y_1(1)+h(1)(-h_1(1)+h_1(0)^2)\\
&=&-2h(1)(y_1(1)y_1^{-1}(1)+y_1(2)y_1^{-1}(0))+4h(1)y_1^{-1}(0)^2y_1(1)^2\\
&&+4h(1)h_1(0)y_1^{-1}(0)y_1(1)+h(1)(-h_1(1)+h_1(0)^2)\\
&=&-2h(1)(y_1(1)y_1^{-1}(1)+y_1(2)y_1^{-1}(0))-h(1)h_1(1)\\
&&+h(1)\bigg(4y_1^{-1}(0)^2y_1(1)^2+4h_1(0)y_1^{-1}(0)y_1(1)+h_1(0)^2\bigg)\\
&=&-\bigg(h(1)\big(2\pal(y_1)y_1^{-1}+h_1\big)(1)+h(2)\big(2\pal(y_1)y_1^{-1}+h_1\big)(0)\bigg)\quad(\text{by}\,h(2)=-h(1)\big(2\pal(y_1)y_1^{-1}+h_1\big)(0))\\
&=&h(3).
\end{eqnarray*}

By Lemma~\mref{lem:intlin}(\mref{it:derivation}), for any given $0\leq i\leq n$,
\begin{equation}
\partial^i(\intl(u_0,u_1,\cdots,u_{n-1}))=\intl(u_i,u_{i+1},\cdots,u_{n+i-1}),
\mlabel{eq:nderi}
\end{equation}
where $u_{n+\ell}=\partial(u_\ell)$ for $0\leq \ell\leq i-1$.
\begin{theorem}
Let $n\in\NN^+$ be fixed. Let $g_0,g_1,\cdots,g_{n-1}\in HA$.
Let
\begin{equation}
L(y)=\partial^n(y)+\sum_{i=0}^{n-1}g_i\partial^i(y)=0.
\mlabel{eq:diff}
\end{equation}
Then $h$ is a solution in $HA$ of Eq.~(\mref{eq:diff}) $\Longleftrightarrow$
\end{theorem}
\begin{proof}
\end{proof}
\begin{theorem}
Let $n\in\NN^+$ be fixed. Let $z:=(z_0,z_1,\cdots,z_{n-1})\in HA^n$.  Let $g_0,g_1,\cdots,g_{n-1}\in HA$. Let $u=\intl(z_0,z_1,\cdots,z_{n-1})$.
Let
\begin{equation}
L(y)=\partial^n(y)+\sum_{i=0}^{n-1}g_i\partial^i(y)=0.
\mlabel{eq:diff}
\end{equation}
Then $u$ is a solution in $HA$ of Eq.~(\mref{eq:diff}) $\Longleftrightarrow$
$Z=z^t$ is a solution of the $n\times n$ matrix equation $LZ'=UZ$, where $Z'=(\partial(z_0),\partial(z_1),\cdots,\partial(z_{n-1}))^t$,
\begin{equation}
L=
\end{equation}
and
\begin{equation}
U=.
\end{equation}
\end{theorem}
}

\smallskip
\noindent {\bf Acknowledgements}: This work was supported by the National Natural Science Foundation of China (Grant No.  11601199  and 11961031). The author thanks Professor William F. Keigher  for many useful conversations and suggestions, and thanks Rutgers University-Newark for its hospitality.

\end{document}